\documentclass[a4paper]{amsart}
\setlength{\textwidth}{125mm}
\setlength{\textheight}{195mm}

\usepackage[utf8]{inputenc}
\usepackage[T1]{fontenc}
\usepackage{lmodern}
\usepackage{amssymb}
\usepackage{tikz}
\usetikzlibrary{matrix,calc,arrows,decorations.pathmorphing}
\tikzset{node distance=2cm, auto}
\tikzset{cd/.style=matrix of math nodes,row sep=2em,column sep=2em, text height=1.5ex, text depth=0.5ex}
\tikzset{cdar/.style=->,auto}

\tikzset{triar/.style={anchor=mid,->}}
\tikzset{tridar/.style={anchor=mid,double,double equal sign distance,-implies}}
\tikzset{narrowfill/.style={inner sep=0pt, fill=white}}

\tikzset{dar/.style={double,double equal sign distance,-implies}}
\tikzset{mid/.style={anchor=mid}} 

\usepackage{nicefrac,mathtools,enumitem}
\usepackage{microtype}
\usepackage[pdftitle={Crossed products for actions of crossed modules on C*-algebras},
  pdfauthor={Alcides Buss, Ralf Meyer},
  pdfsubject={Mathematics}
]{hyperref}
\usepackage[lite]{amsrefs}
\newcommand*{\MRref}[2]{ \href{http://www.ams.org/mathscinet-getitem?mr=#1}{MR #1}}
\newcommand*{\arxiv}[1]{\href{http://www.arxiv.org/abs/#1}{arXiv: #1}}
\renewcommand{\PrintDOI}[1]{\href{http://dx.doi.org/\detokenize{#1}}{doi: \detokenize{#1}}%
  \IfEmptyBibField{pages}{, (to appear in print)}{}}

\setlist[enumerate,1]{label=\textup{(\arabic*)}}
\setlist[enumerate,2]{label=\textup{(\alph*)}}

\usepackage{xcolor}

\numberwithin{equation}{section}
\theoremstyle{plain}
\newtheorem{theorem}[equation]{Theorem}
\newtheorem{lemma}[equation]{Lemma}
\newtheorem{proposition}[equation]{Proposition}

\theoremstyle{definition}
\newtheorem{definition}[equation]{Definition}
\theoremstyle{remark}
\newtheorem{example}[equation]{Example}
\newtheorem{remark}[equation]{Remark}

\DeclareMathOperator{\coker}{coker}
\DeclareMathOperator{\Aut}{Aut}
\DeclareMathOperator{\Ad}{Ad}
\DeclareMathOperator{\cspn}{\overline{span}}

\newcommand*{\nb}{\nobreakdash}
\newcommand*{\Star}{$^*$\nobreakdash-}
\newcommand*{\blank}{\textup{\textvisiblespace}}


\newcommand*{\Lag}{\mathfrak g}
\newcommand*{\Lah}{\mathfrak h}
\newcommand*{\Lak}{\mathfrak k}

\newcommand*{\Q}{\mathbb Q}
\newcommand*{\Z}{\mathbb Z}
\newcommand*{\R}{\mathbb R}
\newcommand*{\N}{\mathbb N}
\newcommand*{\Torus}{\mathbb T}
\newcommand*{\Comp}{\mathbb K}
\newcommand*{\Bound}{\mathbb B}

\newcommand*{\sbe}{\subseteq} 

\newcommand*{\Hilm}[1][E]{\mathcal #1}

\newcommand*{\Cst}{\textup C^*}
\newcommand*{\cstar}{\texorpdfstring{$C^*$\nobreakdash-\hspace{0pt}}{C*-}}
\newcommand*{\Cont}{\textup C}
\newcommand*{\Contc}{\textup C_\textup c}
\newcommand*{\contc}{\Contc}
\newcommand*{\contz}{\Cont_0}

\newcommand*{\Mult}{\mathcal M}
\newcommand*{\Rotc}[1]{\Cst(\Torus_#1)}

\newcommand*{\K}{\textup K}

\newcommand*{\ima}{\textup i}
\newcommand*{\un}{\textup u}
\renewcommand*{\u}{\upsilon}
\newcommand*{\Id}{\textup{Id}}

\newcommand*{\E}{\mathcal E} 
\newcommand*{\U}{\mathcal U} 
\newcommand*{\A}{\mathcal A} 
\newcommand*{\B}{\mathcal B} 
\newcommand*{\I}{\mathcal I} 
\newcommand*{\cm}{\mathcal C}
\newcommand*{\acm}{c}
\newcommand*{\tcm}{\partial}


\newcommand*{\defeq}{\mathrel{\vcentcolon=}}
\newcommand*{\into}{\rightarrowtail}
\newcommand*{\onto}{\twoheadrightarrow}
\newcommand*{\congto}{\xrightarrow\sim}

\newcommand*{\norm}[1]{\lVert#1\rVert}
\newcommand*{\cl}[1]{\overline{#1}}
\newcommand*{\conj}[1]{\overline{#1}}

\newcommand*{\Csttwocat}{\mathfrak C^*(2)}
\newcommand*{\Corrcat}{\mathfrak{Corr}(2)}
\newcommand*{\Corr}{\mathfrak{Corr}}


\hyphenation{group-oid group-oids}

\begin{document}
\title[Crossed products for crossed modules]{Crossed products for actions of crossed modules on C*-algebras}

\author{Alcides Buss}
\email{alcides@mtm.ufsc.br}
\address{Departamento de Matem\'atica\\
 Universidade Federal de Santa Catarina\\
 88.040-900 Florian\'opolis-SC\\
 Brazil}

\author{Ralf Meyer}
\email{rmeyer2@uni-goettingen.de}
\address{Mathematisches Institut\\
 Georg-August-Universit\"at G\"ottingen\\
 Bunsenstra\ss e 3--5\\
 37073 G\"ottingen\\
 Germany}

\begin{abstract}
  We decompose the crossed product functor for actions of crossed
  modules of locally compact groups on \cstar{}algebras into more
  elementary constructions: taking crossed products by group actions
  and fibres in \cstar{}algebras over topological spaces.  For this,
  we extend Takesaki--Takai duality to Abelian crossed modules;
  describe the crossed product for an extension of crossed modules;
  show that equivalent crossed modules have equivalent categories of
  actions on \cstar{}algebras; and show that certain crossed modules
  are automatically equivalent to Abelian crossed modules.
\end{abstract}

\subjclass[2010]{46L55, 18D05}
\keywords{\(\Cst\)\nb-algebra, crossed module, Fell bundle, Takesaki--Takai duality}

\thanks{Supported by the German Research Foundation (Deutsche
  Forschungsgemeinschaft (DFG)) grant ME\,3248/1 and by CNPq
  (Ciências sem Fronteira) -- Brazil.}

\maketitle

\section{Introduction}
\label{sec:introduction}

A crossed module of locally compact groups~\(\cm\) consists of two
locally compact groups \(H\) and~\(G\) with a continuous group
homomorphism \(\tcm\colon H\to G\) and a continuous conjugation action
\(\acm\colon G\to\Aut(H)\) such that
\[
\tcm(\acm_g(h))=g\tcm(h)g^{-1},\qquad
\acm_{\tcm(h)}(k) = hkh^{-1}
\]
for all \(g\in G\), \(h,k\in H\).  Strict actions of crossed modules
on \(\Cst\)\nb-algebras and crossed products for such actions are
defined in~\cite{Buss-Meyer-Zhu:Non-Hausdorff_symmetries}.  Here we
are going to factorise this crossed product functor into more
elementary operations, namely, taking crossed products for actions
of locally compact \emph{groups} and taking fibres in
\(\Cont_0(X)\)-\(\Cst\)-algebras (\cite{Nilsen:Bundles}).

We mostly work with the more flexible notion of action by
correspondences introduced in~\cite{Buss-Meyer-Zhu:Higher_twisted}.
By \cite{Buss-Meyer-Zhu:Higher_twisted}*{Theorem 5.3}, such actions
are Morita--Rieffel equivalent to ordinary strict actions, that is,
actions by automorphisms.  This requires, however, to stabilise the
\(\Cst\)\nb-algebras involved; and certain induced actions that we
need are easier to describe as actions by correspondences.  Here we
define the \(2\)\nb-category of crossed module actions by correspondences
precisely, making explicit some hints
in~\cite{Buss-Meyer-Zhu:Higher_twisted}.  Then we translate the
definition of this \(2\)\nb-category into the language of Fell bundles;
this extends results in~\cite{Buss-Meyer-Zhu:Higher_twisted} from
locally compact groups to crossed modules.  We define saturated Fell
bundles over crossed modules and correspondences between Fell
bundles over crossed modules so that they are equivalent to actions
by correspondences and transformations between such actions.
Correspondences between Fell bundles contain Morita--Rieffel
equivalences of Fell bundles and representations of Fell bundles as
special cases.  The crossed product for a crossed module action by
correspondences is defined by a universal property for
representations.

Let \(\cm_i=(G_i,H_i,\tcm_i,\acm_i)\) be crossed modules of locally
compact groups.  We call a diagram \(\cm_1\to\cm_2\to\cm_3\) of
homomorphisms of crossed modules a (strict) extension if the
resulting diagrams \(G_1\to G_2\to G_3\) and \(H_1\to H_2\to H_3\)
are extensions of locally compact groups in the usual sense.
If~\(\cm_2\) acts on a \(\Cst\)\nb-algebra~\(A\), then \(\cm_1\)
also acts on \(A\) by restriction of the \(\cm_2\)\nb-action.  We
show that
\[
A\rtimes \cm_2 \cong (A\rtimes\cm_1)\rtimes\cm_3
\]
for a certain induced action of~\(\cm_3\) by correspondences on
\(A\rtimes\cm_1\).

If~\(\cm_i\) are ordinary groups~\(G_i\) viewed as crossed modules,
then \(G_1\) is a closed normal subgroup of~\(G_2\) with
quotient~\(G_3=G_2/G_1\).  It is well-known that \(A\rtimes G_1\)
carries a Green twisted action of~\((G_2,G_1)\) such that
\((A\rtimes G_1)\rtimes (G_2,G_1)\cong A\rtimes G_2\).  Our theorem
says that such a Green twisted action may be turned into an action
of~\(G_3\) by correspondences with the same crossed product.  Our
proof, in fact, generalises this idea to cover general extensions of
crossed modules.  Another special case is
\cite{Buss-Meyer-Zhu:Non-Hausdorff_symmetries}*{Theorem 1}, which
says that \((A\rtimes H)\rtimes \cm\cong A\rtimes G\) if
\(\cm=(G,H,\tcm,\acm)\) is a crossed module and~\(A\) is a
\(G\)\nb-\(\Cst\)-algebra.

Moreover, we generalise Takesaki--Takai duality to Abelian crossed
modules.  We call a crossed module Abelian if the group~\(G\) is
Abelian and the action~\(\acm\) is trivial (forcing~\(H\) to be
Abelian).  Thus Abelian crossed modules are just continuous
homomorphisms \(\tcm\colon H\to G\) between Abelian locally compact
groups.  The Pontryagin dual~\(\hat{\cm}\) is the crossed module
\(\hat{\tcm}\colon \hat{G}\to\hat{H}\) given by the transpose
of~\(\tcm\).  Our duality theorem says that actions of~\(\cm\) are
equivalent to actions of the \emph{arrow groupoid}
\(\hat{H}\rtimes\hat{G}\) associated to~\(\cm\); this is the
transformation groupoid for the action of~\(\hat{G}\) on~\(\hat{H}\)
where \(\hat{g}\in\hat{G}\) acts by right translations by
\(\hat{\tcm}(\hat{g})\).

Our duality maps an action of~\(\cm\) to its crossed product by~\(G\),
equipped with the dual action of~\(\hat{G}\) and a canonical
\(\Cont_0(\hat{H})\)-\(\Cst\)-algebra structure that comes from the
original action of~\(H\).  The inverse equivalence takes the crossed
product by~\(\hat{G}\) and extends the dual \(G\)\nb-action on it to
an action of~\(\cm\), using the original
\(\Cont_0(\hat{H})\)-\(\Cst\)-algebra structure.  Thus our duality
result merely enriches the usual Takesaki--Takai duality by
translating the action of~\(H\) for a crossed module action into a
\(\Cont_0(\hat{H})\)-\(\Cst\)-algebra structure on the crossed
product, and vice versa.

In this setting, the crossed product for crossed module actions is
equivalent to the functor of taking the fibre at \(1\in\hat{H}\) for
an action of \(\hat{H}\rtimes\hat{G}\).  Thus crossed products for
Abelian crossed modules may be computed in two steps: first take a
crossed product by an action of the Abelian group~\(G\), then take a
fibre for a \(\Cont_0(\hat{H})\)-\(\Cst\)-algebra structure.

Now we describe our decomposition of crossed products for a general
crossed module \(\cm=(G,H,\tcm,\acm)\).  Let \(G_1\) be the trivial
group and \(H_1\defeq \ker\tcm\).  Since \(\ker\tcm\) is Abelian,
the trivial maps \(\tcm_1\) and~\(\acm_1\) provide a crossed
module~\(\cm_1=(G_1,H_1,\tcm_1,\acm_1)\).  This fits into an
extension \(\cm_1\into \cm\onto \cm_2\),
\(\cm_2=(G_2,H_2,\tcm_2,\acm_2)\), where \(G_2\defeq G\),
\(H_2\defeq H/H_1\) and \(\tcm_2\) and~\(\acm_2\) are the canonical
induced maps.

Next, let \(G_3\defeq \cl{\tcm_2(H_2)}\subseteq G_2\), \(H_3\defeq
H_2\) and let \(\tcm_3\) and~\(\acm_3\) be the restrictions of
\(\tcm_2\) and~\(\acm_2\).  The crossed module~\(\cm_3=(G_3,H_3,\tcm_3,\acm_3)\) has the
feature that~\(\tcm_3\) is injective with dense range; we call such
crossed modules \emph{thin}.

Since~\(G_3\) is a closed normal subgroup of \(G\), \(G_4\defeq
G/G_3\) is a locally compact group.  Let \(H_4=0\), \(\tcm_4\)
and~\(\acm_4\) be trivial.  This gives another strict extension of
crossed modules \(\cm_3\to \cm_2\to \cm_4\).  Two applications of
Theorem~\ref{the:partial_crossed} give
\[
A\rtimes\cm \cong (A\rtimes\cm_1) \rtimes \cm_2\cong ((A\rtimes\cm_1) \rtimes \cm_3)\rtimes\cm_4.
\]
Thus it remains to study crossed products by crossed modules of the
special forms \(\cm_1\), \(\cm_3\) and~\(\cm_4\), where \(G_1=0\),
\(\cm_3\) is thin, and \(H_4=0\).

Since \(G_1=0\), \(\cm_1\) is a very particular Abelian crossed
module.  Here our duality says that actions of~\(\cm_1\) on~\(A\)
are equivalent to \(\Cont_0(\widehat{H_1})\)-\(\Cst\)-algebra
structures on~\(A\), where~\(\widehat{H_1}\) denotes the dual group
of~\(H_1\).  The crossed product with~\(\cm_1\) is the fibre at
\(1\in\widehat{H_1}\) for the corresponding
\(\Cont_0(\widehat{H_1})\)-\(\Cst\)-algebra structure.

Since \(H_4=0\), an action of~\(\cm_4\) is equivalent to an action
of the group~\(G_4\); crossed products also have the usual meaning.

To understand crossed products for the thin crossed module~\(\cm_3\),
we replace~\(\cm_3\) by a simpler but equivalent crossed module.
Equivalent crossed modules have equivalent categories of actions on
\(\Cst\)\nb-algebras, and the equivalence preserves both the
underlying \(\Cst\)\nb-algebra of the action and the crossed products.
Many thin crossed modules are equivalent to Abelian crossed modules.
We prove this for all thin crossed modules of Lie groups and provide
both a sufficient criterion and a counterexample for thin crossed
modules of locally compact groups.

If the thin crossed module~\(\cm_3\) is equivalent to an Abelian
crossed module~\(\cm_5\), then we may turn a \(\cm_3\)\nb-action
on~\(A\) into a \(\cm_5\)\nb-action on~\(A\) with an isomorphic
crossed product.  By our duality theory, the crossed product
by~\(\cm_5\) is the fibre at \(1\in\widehat{H_5}\) for a canonical
\(\Cont_0(\widehat{H_5})\)-\(\Cst\)-algebra structure on \(A\rtimes
G_5\).

Assuming that~\(\cm_3\) satisfies the mild condition to make it
Abelian, we thus decompose the crossed product functor for our
original crossed module~\(\cm\) into four more elementary steps:
taking fibres in \(\Cont_0(X)\)-\(\Cst\)-algebras twice and taking
crossed products by ordinary groups twice.

\section{Crossed module actions by correspondences}
\label{sec:Fell_cm}

A \emph{correspondence} between two \cstar{}algebras \(A\) and~\(B\)
is a Hilbert \(B\)\nb-module~\(\E\) with a nondegenerate
\Star{}homomorphism \(A\to \Bound(\E)\).  Correspondences are the
arrows of a weak \(2\)\nb-category \(\Corrcat\), with \cstar{}algebras
as objects and isomorphisms of correspondences as \(2\)\nb-arrows
(see~\cite{Buss-Meyer-Zhu:Higher_twisted}).

Let \(\cm=(H,G,\tcm,\acm)\) be a crossed module of locally compact
groups.  We may turn~\(\cm\) into a strict \(2\)\nb-group with group
of arrows~\(G\) and \(2\)\nb-arrow space~\(G\times H\),
where~\((g,h)\) gives a \(2\)\nb-arrow \(g\Rightarrow g\tcm(h)\).

Following \cite{Buss-Meyer-Zhu:Higher_twisted}*{Section 4}, we
define an \emph{action of~\(\cm\) by correspondences} as a morphism
\(\cm\to\Corrcat\) in the sense of
\cite{Buss-Meyer-Zhu:Higher_twisted}*{Definition 4.1}, with
continuity conditions added as in
\cite{Buss-Meyer-Zhu:Higher_twisted}*{Section 4.1}; we define
\emph{transformations} between such actions as in
\cite{Buss-Meyer-Zhu:Higher_twisted}*{Section 4.2}, again with extra
continuity requirements; and we define \emph{modifications} between
such transformations as in
\cite{Buss-Meyer-Zhu:Higher_twisted}*{Section 4.3}.  This defines a
\(2\)\nb-category \(\Corr(\cm)\).

The definitions in~\cite{Buss-Meyer-Zhu:Higher_twisted} for actions
of general strict \(2\)\nb-categories may be simplified
because~\(\cm\) is a strict \(2\)\nb-group.  Analogous
simplifications are already discussed in detail
in~\cite{Buss-Meyer-Zhu:Higher_twisted} for weak actions by
automorphisms, that is, morphisms to the
\(2\)\nb-category~\(\Csttwocat\)
(see~\cite{Buss-Meyer-Zhu:Higher_twisted} for the definition
of~\(\Csttwocat\)).  For this reason, we merely state the simplified
definitions without proving that they are equivalent to those in
\cite{Buss-Meyer-Zhu:Higher_twisted}*{Section 4}.

\begin{definition}
  \label{def:act_corr}
  Let \(\cm=(H,G,\tcm,\acm)\) be a crossed module of locally compact
  groups. An \emph{action of~\(\cm\) by correspondences} consists of
  \begin{itemize}
  \item a \(\Cst\)\nb-algebra~\(A\);
  \item correspondences \(\alpha_g\colon A\to A\) for all \(g\in G\)
    with \(\alpha_1=A\) the identity correspondence;
  \item isomorphisms of correspondences \(\omega_{g_1,g_2}\colon
    \alpha_{g_2}\otimes_A \alpha_{g_1}\to \alpha_{g_1g_2}\) for
    \(g_1,g_2\in G\), where \(\omega_{1,g}\) and \(\omega_{g,1}\)
    are the canonical isomorphisms \(\alpha_g\otimes_A A\cong
    \alpha_g\) and \(A\otimes_A\alpha_g\cong \alpha_g\);
  \item isomorphisms of correspondences \(\eta_h\colon A \to
    \alpha_{\tcm(h)}\) for all \(h\in H\), where \(\eta_1=\Id_A\);
  \item a \(\Cont_0(G)\)-linear correspondence~\(\alpha\) from
    \(\Cont_0(G,A)\) to itself with fibres~\(\alpha_g\); more
    explicitly, this means a space of continuous sections
    \(\alpha\subseteq\prod_{g\in G} \alpha_g\) such that pointwise
    products of elements in~\(\alpha\) with elements of
    \(\Cont_0(G,A)\) on the left or right are again in~\(\alpha\),
    pointwise inner products of elements in~\(\alpha\) are in
    \(\Cont_0(G,A)\), and the projections \(\alpha\to\alpha_g\) are
    all surjective;
  \end{itemize}
  these must satisfy the following conditions:
  \begin{enumerate}
  \item \label{def:act_corr1} the following diagram of isomorphisms
    commutes for all \(g_1,g_2,g_3\in G\):
    \[
    \begin{tikzpicture}
      \matrix(m)[cd,column sep=8em]{
        \alpha_{g_3}\otimes_A \alpha_{g_2}\otimes_A \alpha_{g_1}&
        \alpha_{g_3}\otimes_A \alpha_{g_1g_2}\\
        \alpha_{g_2g_3}\otimes_A \alpha_{g_1}&
        \alpha_{g_1g_2g_3}\\
      };
      \draw[cdar] (m-1-1) -- node {\(\Id_{\alpha_{g_3}} \otimes_A \omega_{g_1,g_2}\)} (m-1-2);
      \draw[cdar] (m-1-1) -- node[swap] {\(\omega_{g_2,g_3} \otimes_A \Id_{\alpha_{g_1}}\)} (m-2-1);
      \draw[cdar] (m-1-2) -- node {\(\omega_{g_1g_2,g_3}\)} (m-2-2);
      \draw[cdar] (m-2-1) -- node {\(\omega_{g_1,g_2g_3}\)} (m-2-2);
    \end{tikzpicture}
    \]
  \item \label{def:act_corr2} the following diagram of isomorphisms
    commutes for all \(h_1,h_2\in H\):
    \[
    \begin{tikzpicture}
      \matrix(m)[cd,column sep=6em]{
        A\otimes_A A&
        A\\
        \alpha_{\tcm(h_2)}\otimes_A \alpha_{\tcm(h_1)}&
        \alpha_{\tcm(h_1h_2)}\\
      };
      \draw[cdar] (m-1-1) -- node {\footnotesize can} (m-1-2);
      \draw[cdar] (m-1-1) -- node[swap] {\(\eta_{h_2} \otimes_A \eta_{h_1}\)} (m-2-1);
      \draw[cdar] (m-1-2) -- node {\(\eta_{h_1h_2}\)} (m-2-2);
      \draw[cdar] (m-2-1) -- node {\(\omega_{\tcm(h_1),\tcm(h_2)}\)} (m-2-2);
    \end{tikzpicture}
    \]
  \item \label{def:act_corr3} the following diagram of isomorphisms
    commutes for all \(h\in H\), \(g\in G\):
    \begin{equation}
      \label{eq:CoherenceOmegasWithEtas}
      \begin{tikzpicture}[yscale=1.2,xscale=2,baseline=(current bounding box.west)]
        \node (m-0-1) at (0,2) {\(A\otimes_A \alpha_g\)};
        \node (m-0-2) at (1,2) {\(\alpha_g\)};
        \node (m-0-3) at (2,2) {\(\alpha_g\otimes_A A\)};
        \node (m-1-1) at (0,1) {\(\alpha_{\tcm(h)} \otimes_A \alpha_g\)};
        \node (m-1-3) at (2,1) {\(\alpha_g \otimes_A \alpha_{\tcm(\acm_gh)}\)};
        \node (m-2-1) at (0,0) {\(\alpha_{g\tcm(h)}\)};
        \node (m-2-3) at (2,0) {\(\alpha_{\tcm(\acm_gh)g}\)};
        \draw[cdar] (m-0-1) -- node {\footnotesize can} (m-0-2);
        \draw[cdar] (m-0-3) -- node[swap] {\footnotesize can} (m-0-2);
        \draw[cdar] (m-0-1) -- node[swap] {\(\eta_h\otimes_A\Id_{\alpha_g}\)} (m-1-1);
        \draw[cdar] (m-0-3) -- node {\(\Id_{\alpha_g}\otimes_A\eta_{\acm_g(h)}\)} (m-1-3);
        \draw[cdar] (m-1-1) -- node[swap] {\(\omega_{g,\tcm(h)}\)} (m-2-1);
        \draw[cdar] (m-1-3) -- node {\(\omega_{\tcm(\acm_gh),g}\)} (m-2-3);
        \draw[double,double equal sign distance] (m-2-1) -- (m-2-3);
      \end{tikzpicture}
    \end{equation}

  \item \label{def:act_corr4} fibrewise application of
    \(\omega_{g_2,g_1}\) gives an isomorphism
    \[
    \omega\colon \pi_2^*\alpha\otimes_{\Cont_0(G\times G,A)}
    \pi_1^*\alpha \to \mu^*\alpha,
    \]
    where \(\pi_1\) and~\(\pi_2\) are the two coordinate projections
    \(G\times G\to G\) and \(\mu\) is the multiplication map
    \(G\times G\to G\); notice that the fibres of these two Hilbert
    modules over \(\Cont_0(G\times G,A)\) at \((g_1,g_2)\in G\times
    G\) are \(\alpha_{g_2}\otimes_A\alpha_{g_1}\)
    and~\(\alpha_{g_1g_2}\), respectively;

  \item \label{def:act_corr5} fibrewise application of
    \((\eta_h)_{h\in H}\) gives an isomorphism \(\eta\colon
    \Cont_0(H,A)\to \tcm^*\alpha\).
  \end{enumerate}
\end{definition}

Since the maps \(\omega_{g_1,g_2}\) and~\(\eta_h\) are isomorphisms,
the continuity conditions \ref{def:act_corr4}
and~\ref{def:act_corr5} hold if
\((\omega_{g_1,g_2})_{g_1,g_2\in G}\) maps \(\pi_2^*\alpha\otimes_A
\pi_1^*\alpha\subseteq \prod_{g_1,g_2\in G}
\alpha_{g_2}\otimes_A\alpha_{g_1}\) into \(\mu^*\alpha\subseteq
\prod_{g_1,g_2\in G} \alpha_{g_1g_2}\) and~\((\eta_h)_{h\in H}\)
maps \(\Cont_0(H,A) \subseteq \prod_{h\in H} A\) into
\(\tcm^*\alpha\subseteq \prod_{h\in H} \alpha_{\tcm(h)}\); these
maps are automatically unitary (isometric and surjective).

The \emph{trivial} \(\cm\)\nb-action on~\(B\) is given by
\(\beta_g=B\) for \(g\in G\), \(\omega_{g_2,g_1}=\Id_B\) for
\(g_1,g_2\in G\), \(\eta_h=\Id_B\) for \(h\in H\), and
\(\beta=\Cont_0(G,B)\).

\begin{definition}
  \label{def:transformation}
  Let \((A,\alpha,\omega^A,\eta^A)\) and \((B,\beta,\omega^B,\eta^B)\)
  be \(\cm\)\nb-actions by correspondences.  A
  \emph{\(\cm\)\nb-equivariant correspondence} or
  \emph{transformation} between them consists of
  \begin{itemize}
  \item a correspondence~\(\Hilm\) from~\(A\) to~\(B\), and
  \item isomorphisms of correspondences \(\chi_g\colon
    \Hilm\otimes_B \beta_g \to \alpha_g\otimes_A \Hilm\) with
    \(\chi_1=1\),
  \end{itemize}
  such that
  \begin{enumerate}
  \item \label{def:transformation1} for all \(h\in H\), the following
    diagram commutes:
    \begin{equation}
      \label{eq:transformation_H_natural}
      \begin{tikzpicture}[yscale=1.2,xscale=2,baseline=(current bounding box.west)]
        \node (m-0-1) at (0,2) {\(A\otimes_A \Hilm\)};
        \node (m-0-2) at (1,2) {\(\Hilm\)};
        \node (m-0-3) at (2,2) {\(\Hilm\otimes_B B\)};
        \node (m-1-1) at (0,1) {\(\alpha_{\tcm(h)} \otimes_A \Hilm\)};
        \node (m-1-3) at (2,1) {\(\Hilm \otimes_B \beta_{\tcm(h)}\)};
        \draw[cdar] (m-0-1) -- node {\footnotesize can} (m-0-2);
        \draw[cdar] (m-0-3) -- node[swap] {\footnotesize can} (m-0-2);
        \draw[cdar] (m-0-1) -- node[swap] {\(\eta^A_h\otimes_A\Id_{\Hilm}\)} (m-1-1);
        \draw[cdar] (m-0-3) -- node {\(\Id_{\Hilm}\otimes_B\eta^B_h\)} (m-1-3);
        \draw[cdar] (m-1-3) -- node {\(\chi_{\tcm(h)}\)} (m-1-1);
      \end{tikzpicture}
    \end{equation}

  \item \label{def:transformation2} for all \(g_1,g_2\in G\), the
    following diagram commutes:
    \begin{equation}
      \label{eq:twisted_conjugacy_coherence}
      \begin{tikzpicture}[yscale=1.2,xscale=3,baseline=(current bounding box.west)]
        \node (m-1-1) at (0,2)
        {\(\Hilm \otimes_B \beta_{g_2} \otimes_B \beta_{g_1}\)};
        \node (m-1-2) at (2,2)
        {\(\alpha_{g_2} \otimes_A \Hilm \otimes_B \beta_{g_1}\)};
        \node (m-2-1) at (0,1)
        {\(\Hilm \otimes_B \beta_{g_1g_2}\)};
        \node (m-2-2) at (2,1)
        {\(\alpha_{g_2}\otimes_A \alpha_{g_1} \otimes_A \Hilm\)};
        \node (m-3-2) at (1,0)
        {\(\alpha_{g_1g_2}\otimes_A \Hilm\)};

        \draw[cdar] (m-1-1) -- node
        {\(\chi_{g_2} \otimes_B \Id_{\beta_{g_1}}\)} (m-1-2);
        \draw[cdar] (m-1-2) -- node
        {\(\Id_{\alpha_{g_2}}\otimes_A \chi_{g_1}\)} (m-2-2);
        \draw[cdar] (m-1-1) -- node[swap]
        {\(\Id_{\Hilm} \otimes_B \omega^B_{g_1,g_2}\)} (m-2-1);
        \draw[cdar] (m-2-2) -- node
        {\(\omega^A_{g_1,g_2}\otimes_A \Id_{\Hilm}\)} (m-3-2);
        \draw[cdar] (m-2-1) -- node[swap] {\(\chi_{g_1g_2}\)} (m-3-2);
      \end{tikzpicture}
    \end{equation}

  \item \label{def:transformation3} pointwise application
    of~\(\chi_g\) gives a \(\Cont_0(G)\)-linear isomorphism
    \(\chi\colon \Hilm\otimes_B \beta \to \alpha\otimes_A \Hilm\).
  \end{enumerate}

  A transformation is called an (equivariant Morita--Rieffel)
  \emph{equivalence} if~\(\Hilm\) is an equivalence, that is, the
  left \(A\)\nb-action is given by an isomorphism
  \(A\cong\Comp(\Hilm)\).

  A transformation is called a (covariant) \emph{representation} of
  \((A,\alpha)\) on \(B\) if the \(\cm\)\nb-action on~\(B\) is
  trivial and~\(\Hilm\) is the correspondence associated to a
  nondegenerate \Star{}homomorphism \(\pi\colon A\to \Mult(B)\). In
  this case, we also write \(\pi\colon (A,\alpha)\to B\) or simply
  \(\pi\colon A\to B\) to denote the representation.
\end{definition}

The continuity condition~\ref{def:transformation3} is equivalent to
\((\chi_g)_{g\in G}\) mapping \(\Hilm\otimes_B \beta\subseteq
\prod_{g\in G} \Hilm\otimes_B\beta_g\) into \(\alpha\otimes_A
\Hilm\subseteq \prod_{g\in G} \alpha_g\otimes_A \Hilm\); this map is
automatically unitary because it is fibrewise unitary.

\begin{definition}
  \label{def:modification}
  A \emph{modification} between two transformations
  \((\Hilm,\chi_g)\) and \((\Hilm',\chi'_g)\) from~\(A\)
  to~\(B\) is a unitary \(W\colon \Hilm\to \Hilm'\) such that for all
  \(g\in G\) the following diagram commutes:
  \begin{equation}
    \label{eq:modification}
    \begin{tikzpicture}[yscale=1.2,xscale=2.4,baseline=(current bounding box.west)]
      \node (m-0-1) at (0,1) {\(\Hilm\otimes_B \beta_g\)};
      \node (m-0-2) at (1,1) {\(\alpha_g \otimes_A\Hilm\)};
      \node (m-1-1) at (0,0) {\(\Hilm'\otimes_B \beta_g\)};
      \node (m-1-2) at (1,0) {\(\alpha_g \otimes_A\Hilm'\)};
      \draw[cdar] (m-0-1) -- node {\(\chi_g\)} (m-0-2);
      \draw[cdar] (m-1-1) -- node[swap] {\(\chi'_g\)} (m-1-2);
      \draw[cdar] (m-0-1) -- node[swap] {\(W\otimes_B\Id_{\beta_g}\)} (m-1-1);
      \draw[cdar] (m-0-2) -- node {\(\Id_{\alpha_g}\otimes_A W\)} (m-1-2);
    \end{tikzpicture}
  \end{equation}
\end{definition}

The notion of representation above leads to a definition of crossed products:

\begin{definition}
  \label{def:crossed_product}
  A \emph{crossed product} for a \(\cm\)\nb-action
  \((A,\alpha,\omega,\eta)\) by correspondences is a
  \cstar{}algebra~\(B\) with a representation \(\pi^\un\colon  A\to B\) that is universal in the sense that any other representation
  \(A\to C\) factors uniquely as \(f\circ\pi^\un\) for a morphism
  (nondegenerate \Star{}homomorphism) \(f\colon B\to\Mult(C)\).
\end{definition}

If a crossed product exists, then its universal property determines
it uniquely up to canonical isomorphism because an isomorphism in
the morphism category of \(\Cst\)\nb-algebras must be a
\Star{}isomorphism in the usual sense.  We will construct crossed
products later using cross-sectional \cstar{}algebras of Fell
bundles.

The actions, transformations, and modifications defined above are
the objects, arrows and \(2\)\nb-arrows of a weak \(2\)\nb-category
(that is, bicategory) \(\Corr(\cm)\) with invertible \(2\)\nb-arrows.  This
statement contains the following assertions.  Given two
\(\cm\)\nb-actions \(x_i=(A_i,\alpha_i,\omega_i,\eta_i)\) for
\(i=1,2\), the transformations \(x_1\to x_2\) (as objects) and the modifications (as morphisms)
between such transformations form a groupoid \(\Corr_\cm(x_1,x_2)\);
here the composition of modifications is just the composition of
unitary operators.  Given three \(\cm\)\nb-actions \(x_1\), \(x_2\)
and~\(x_3\), there is a composition bifunctor
\[
\Corr_\cm(x_2,x_3)\times \Corr_\cm(x_1,x_2) \to \Corr_\cm(x_1,x_3);
\]
the composite of two transformations \((\Hilm_1,\chi_1)\) from
\(x_1\) to~\(x_2\) and \((\Hilm_2,\chi_2)\) from~\(x_2\)
to~\(x_3\) is the transformation from~\(x_1\) to~\(x_3\) consisting
of \(\Hilm_1\otimes_{A_2} \Hilm_2\) and
\[
\Hilm_1\otimes_{A_2} \Hilm_2 \otimes_{A_3} \alpha_{3g}
\xrightarrow{\Id_{\Hilm_1}\otimes_{A_2} \chi_{2g}}
\Hilm_1\otimes_{A_2} \alpha_{2g} \otimes_{A_1} \Hilm_2
\xrightarrow{\chi_{1g}\otimes_{A_1} \Id_{\Hilm_2}}
\alpha_{1g} \otimes_{A_1} \Hilm_1\otimes_{A_2} \Hilm_2.
\]
This composition is associative and unital up to canonical
isomorphisms with suitable coherence properties.  To see the
associators, notice that \((\Hilm_1\otimes_{A_2} \Hilm_2)
\otimes_{A_3} \Hilm_3\) and \(\Hilm_1\otimes_{A_2} (\Hilm_2
\otimes_{A_3} \Hilm_3)\) are not identical but merely canonically
isomorphic.  These canonical isomorphisms are the associators.  They
are canonical enough that being careful about them would lead to
more confusion than leaving them out.  The identity arrow on~\(A\)
is~\(A\) with the canonical isomorphisms \(\chi_g\colon
A\otimes_A\alpha_g\cong \alpha_g\cong \alpha_g\otimes_A A\).  The
composite of another arrow with such an identity arrow is
canonically isomorphic to that arrow using the canonical
isomorphisms \(A\otimes_A\Hilm\cong\Hilm\) and \(\Hilm\otimes_B
B\cong\Hilm\).  These are the unit transformations.  The coherence
conditions for a weak \(2\)\nb-category listed in
\cite{Buss-Meyer-Zhu:Higher_twisted}*{Section 2.2.1} are trivially
satisfied.

\subsection{Translation to Fell bundles}
\label{sec:translate_Fell}

For a locally compact group, it is shown
in~\cite{Buss-Meyer-Zhu:Higher_twisted} that actions by
correspondences are equivalent to saturated Fell bundles.  When we
reinterpret everything in terms of Fell bundles, transformations
become correspondences between Fell bundles, modifications become
isomorphisms of such correspondences, and representations become
representations of Fell bundles in the usual sense.  Thus the
cross-sectional \cstar{}algebra of the associated Fell bundle has
the correct universal property for a crossed product.  We want to
extend all these results to crossed module actions.

Most of the work is already done
in~\cite{Buss-Meyer-Zhu:Higher_twisted}.  Let
\(\cm=(G,H,\tcm,\acm)\) be a crossed module of locally compact
groups and let \((A,\alpha,\omega,\eta)\) be an action of~\(\cm\) as
in Definition~\ref{def:act_corr}.  Forgetting~\(\eta\), the data
\((A,\alpha,\omega)\) is a continuous action by correspondences of
the locally compact group~\(G\) (as defined in
\cite{Buss-Meyer-Zhu:Higher_twisted} or as in our
Definition~\ref{def:act_corr} for~\(G\) viewed as a crossed module).
Results in~\cite{Buss-Meyer-Zhu:Higher_twisted} about locally
compact groups show that the data \((A,\alpha,\omega)\) is
equivalent to a saturated Fell bundle over~\(G\).  The map~\(\eta\)
gives us some extra data that describes how the group~\(H\) acts.  A
transformation between actions of~\(\cm\) is the same as a
transformation between the resulting actions of~\(G\) that satisfies
an additional compatibility condition with the \(H\)\nb-actions.
And modifications for \(G\)- and \(\cm\)\nb-actions are just the
same.

It remains to translate everything in \(\Corr(\cm)\) related to the
group~\(H\) to the language of Fell bundles.  For this we first need some notation.

Let~\(\A\) be a Fell bundle over~\(G\) with fibres~\(\A_g\) at
\(g\in G\).  A \emph{multiplier of order \(g\in G\)} of~\(\A\) is a
pair \(\mu=(L,R)\) (left and right multipliers) of maps
\(L,R\colon\A\to\A\) such that \(L(\A_f)\subseteq \A_{gf}\) and
\(R(\A_f)\subseteq \A_{fg}\) for all \(f\in G\), and \(aL(b)=R(a)b\)
for all \(a,b\in \A\) (see
\cite{Doran-Fell:Representations_2}*{VIII.2.14}).  We write
\(\Mult(\A)_g\) for the set of multipliers of order~\(g\).  We
usually write \(\mu\cdot a=L(a)\) and \(a\cdot\mu=R(a)\).

The maps \(L\) and~\(R\) must be fibrewise linear and bounded.  The
adjoint of~\(\mu\) is defined by \(\mu^*\cdot a=(a^*\cdot\mu)^*\)
and \(a\cdot\mu^*=(\mu\cdot a^*)^*\), and~\(\mu\) is called unitary
if \(\mu^*\mu=\mu\mu^*=1\) (the unit of \(\Mult(\A_1)\)).  The set
\(\Mult(\A)=\bigcup_{g\in G} \Mult(\A)_g\) of all multipliers
of~\(\A\) is a Fell bundle over~\(G\) viewed as a discrete group,
called the \emph{multiplier Fell bundle} of~\(\A\).  We
endow~\(\Mult(\A)\) with the \emph{strict topology}: a
net~\((\mu_i)\) in~\(\Mult(\A)\) converges strictly to \(\mu\in
\Mult(\A)\) if and only if \(\mu_i\cdot a\to \mu\cdot a\) and
\(a\cdot\mu_i\to a\cdot \mu\) in~\(\A\) for all \(a\in \A\).
Let \(\U\Mult(\A)\) be the
group of unitary multipliers of~\(\A\) of arbitrary order.

We may view the fibres~\(\A_g\) as Hilbert bimodules over~\(\A_1\)
using the multiplication in the Fell bundle and the inner products
\(\langle x,y\rangle_{\A_1}= x^*y\) on the right and \(_{\A_1}\langle
x,y\rangle= xy^*\) on the left.  Taking these operations
fibrewise makes the space \(\Gamma_0(\A)\) of continuous sections
of~\(\A\) vanishing at infinity a Hilbert bimodule over
\(\Cont_0(G,\A_1)\) because the multiplication and involution in the
Fell bundle are continuous.

\begin{lemma}
  \label{lem:MultipliersOfFellBundlesAndBimodules}
  Let \(\A\) be a saturated Fell bundle.  Then \(\Mult(\A)_g\) is
  isomorphic to the space of adjointable operators \(\A_1\to\A_g\).
  The space of strictly continuous sections of \(\Mult(\A)\) is
  isomorphic to the space of adjointable operators
  \(\Cont_0(G,\A_1)\to\Gamma_0(\A)\), that is, the multiplier
  Hilbert bimodule of \(\Gamma_0(\A)\) \textup(as defined in
  \cite{Echterhoff-Kaliszewski-Quigg-Raeburn:Categorical}*{Chapter
    1.2}\textup).
\end{lemma}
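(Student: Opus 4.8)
The plan is to prove the two isomorphism statements in turn, identifying multipliers of order $g$ with adjointable operators fibrewise, and then bootstrapping to the section-level statement. The key tool is saturation: for a saturated Fell bundle the multiplication map $\A_g\otimes_{\A_1}\A_1\to\A_g$ together with the inner products $\langle x,y\rangle_{\A_1}=x^*y$ and ${}_{\A_1}\langle x,y\rangle=xy^*$ exhibit each~$\A_g$ as an $\A_1$\nb-$\A_1$\nb-imprimitivity bimodule (the left inner product has dense range precisely because the bundle is saturated). Thus each~$\A_g$ is in particular a full Hilbert $\A_1$\nb-module, and $\A_1$~is itself the identity imprimitivity bimodule. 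This is the structural fact that makes the comparison with adjointable operators possible.

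First I would treat the fibrewise statement. Given a multiplier $\mu=(L,R)$ of order~$g$, the left-multiplier map $L\colon\A_1\to\A_g$ is $\A_1$\nb-linear for the right module structures, and the defining relation $a\cdot L(b)=R(a)\cdot b$ shows that~$R$ furnishes the adjoint: for $a\in\A_1$, $x\in\A_g$ one checks $\langle L(a),x\rangle_{\A_1}=L(a)^*x=(a^*\cdot R^*\!)\dots$, so that the operator $T_\mu\defeq L|_{\A_1}\colon\A_1\to\A_g$ is adjointable with $T_\mu^*$ given by the right-multiplier action of~$\mu^*$. Conversely, an adjointable operator $T\colon\A_1\to\A_g$ determines a multiplier: define $L$ on~$\A_f$ by extending $T$ through the imprimitivity bimodule structure, i.e.\ using $\A_f\cong\A_g\otimes_{\A_1}\A_{g^{-1}f}$ via saturation, and define $R$ dually from~$T^*$. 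The relation $aL(b)=R(a)b$ then follows from adjointability. These two assignments are mutually inverse and respect the linear and $*$\nb-structures, giving $\Mult(\A)_g\cong\Comp\bigl(\A_1,\A_g\bigr)_{\mathrm{adj}}$, the adjointable operators.

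Next I would globalise to sections. A strictly continuous section $g\mapsto\mu_g\in\Mult(\A)_g$ assembles, via the fibrewise isomorphism, into a family of adjointable operators $\A_1\to\A_g$; applying this family to a fixed section $a\in\Cont_0(G,\A_1)$ produces the section $g\mapsto\mu_g\cdot a(g)\in\A_g$, and strict continuity of $(\mu_g)$ is exactly the requirement that this output lie in $\Gamma_0(\A)$ and depend adjointably on~$a$. Conversely an adjointable operator $\Cont_0(G,\A_1)\to\Gamma_0(\A)$ is automatically $\Cont_0(G)$\nb-linear (since $\Cont_0(G)$ sits centrally in the multiplier algebra of the coefficient algebra $\Cont_0(G,\A_1)$), hence decomposes into fibrewise adjointable operators $\A_1\to\A_g$, which are precisely a strictly continuous section of $\Mult(\A)$. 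Matching the two module structures identifies this with the multiplier Hilbert bimodule of~$\Gamma_0(\A)$ in the sense of~\cite{Echterhoff-Kaliszewski-Quigg-Raeburn:Categorical}*{Chapter 1.2}.

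\textbf{The main obstacle} I expect is not the algebra but the topology: verifying that the \emph{strict} topology on sections of $\Mult(\A)$ corresponds exactly to the norm of the multiplier Hilbert bimodule (equivalently, strict adjointable convergence of operators $\Cont_0(G,\A_1)\to\Gamma_0(\A)$), and that every adjointable operator really is $\Cont_0(G)$\nb-linear and so disintegrates fibrewise. The $\Cont_0(G)$\nb-linearity is where saturation and continuity of the bundle operations must be used carefully, since a priori an adjointable operator on $\Cont_0(G,\A_1)$ need not respect fibres; the point is that $\Gamma_0(\A)$ is a $\Cont_0(G)$\nb-module and the central action of $\Cont_0(G)$ forces compatibility. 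Everything else reduces to the imprimitivity bimodule bookkeeping above, which is routine once saturation is in hand.
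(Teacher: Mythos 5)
Your overall architecture --- identify $\Mult(\A)_g$ with adjointable operators $\A_1\to\A_g$ fibrewise, then globalise to sections --- is exactly the paper's, and your fibrewise half is essentially its argument: restriction of a multiplier gives the adjointable operator, and nondegeneracy $\A_1\cdot\A_f=\A_f$ together with the saturation identity $\cl{\A_g\A_g^*}=\A_1$ (needed so that the right-multiplier map $R$ takes values in the actual fibres $\A_{fg}$ rather than in some multiplier completion) gives the unique extension back. One bookkeeping slip: the factorisation $\A_f\cong\A_g\otimes_{\A_1}\A_{g^{-1}f}$ is not the one that defines $L$ on~$\A_f$; you want $\A_f=\A_1\cdot\A_f$ and $L(a_1b)\defeq T(a_1)b$, with well-definedness checked via $T^*T\in\Mult(\A_1)$. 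This is repairable and not a gap.

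The genuine gap is in the second half, and it sits exactly where you ``expect the main obstacle'': you assert that strict continuity of $(\mu_g)$ \emph{is exactly} adjointability of the pointwise operator $\Cont_0(G,\A_1)\to\Gamma_0(\A)$, but you prove neither direction, and your hint for closing it points at the wrong place. The automatic $\Cont_0(G)$-linearity of adjointable operators is correct but needs no saturation at all (centrality of $\Cont_0(G)$ in $\Mult(\Cont_0(G,\A_1))$ plus an approximate-unit argument suffices). What actually requires saturation is recovering the \emph{right}-multiplication half of strict continuity --- continuity of $g\mapsto a\cdot\mu_g$ for a \emph{fixed} bundle element $a\in\A$ --- from the adjoint $T^*$: the adjoint applies $\mu_g^*$ fibrewise to elements of the moving fibre~$\A_g$ (i.e.\ to sections of~$\A$), whereas strict continuity requires applying $\mu_g^*$ to a fixed element as $g$ varies, and these do not match up directly. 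The paper bridges this with the factorisations $\Cont_0(G,\A_1)\cdot\Gamma_0(\A)=\Gamma_0(\A)$ and $\Gamma_0(\A)\cdot\Gamma_0(\A^*)=\Cont_0(G,\A_1)$; the second identity is precisely where saturation enters the global statement, converting ``$(\mu_g^*\cdot a_g)$ is continuous for all $(a_g)\in\Gamma_0(\A)$'' into ``$(\mu_g^*\cdot a_g)$ is continuous for all $(a_g)\in\Cont_0(G,\A_1)$'', hence into continuity of $a\cdot\mu_g=(\mu_g^*\cdot a^*)^*$ for all $a\in\A_1$ and then all $a\in\A$. Without this (or an equivalent device) your closing claim that the fibrewise operators ``are precisely a strictly continuous section of $\Mult(\A)$'' is unproved --- and it is the actual content of the lemma, since for non-saturated bundles the statement can fail. (Both you and the paper also tacitly use local uniform boundedness of $\norm{\mu_g}$, via Banach--Steinhaus, to pass from constant sections to arbitrary ones in the other direction; that is a minor point.)
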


\begin{proof}
  A multiplier~\(\mu\) of~\(\A\) of order~\(g\) restricts to an
  adjointable map \(\A_1\to\A_g\), \(a\mapsto \mu\cdot a\), with
  adjoint \(b\mapsto \mu^*\cdot b\).  Furthermore, since
  \(\A_1\cdot\A_{g_2}=\A_{g_2}\) for all \(g_2\in G\), an
  adjointable map \(\A_1\to\A_g\) extends uniquely to a multiplier
  of~\(\A\).  For the last statement, we must show that a section
  \((\mu_g)_{g\in G}\) of \(\Mult(\A)\) is strictly continuous if
  and only if pointwise application of \(\mu_g\) and~\(\mu_g^*\)
  gives well-defined maps \(\Cont_0(G,\A_1)\leftrightarrow
  \Gamma_0(\A)\).  The existence of a map \(\Cont_0(G,\A_1)\to
  \Gamma_0(\A)\) is equivalent to the continuity of \(\mu_g\cdot a\)
  for all \(a\in\A_1\), which is equivalent to the continuity of
  \(\mu_g\cdot a\) for all \(a\in\A\) because \(\A=\A_1\cdot\A\).
  Since the pointwise product maps for a saturated Fell bundle
  satisfy
  \[
  \Cont_0(G,\A_1)\cdot\Gamma_0(\A) = \Gamma_0(\A)
  \quad\text{and}\quad
  \Gamma_0(\A)\cdot\Gamma_0(\A^*) = \Cont_0(G,\A_1),
  \]
  (with \(\A^*_g\defeq\A_{g^{-1}}\)), the continuity of
  \(\mu_g^*\cdot a_g\) for all \((a_g)\in\Gamma_0(\A)\) is equivalent
  to the continuity of \(\mu_g^*\cdot a_g\) for all
  \((a_g)\in\Cont_0(G,\A_1)\).  This is in turn equivalent to the
  continuity of \(a\cdot \mu_g\) for all \(a\in\A_1\) or to the
  continuity of \(a\cdot \mu_g\) for all \(a\in\A\).
\end{proof}

\begin{definition}
  \label{def:FellBundle2Group}
  Let \(\cm=(G,H,\tcm,\acm)\) be a crossed module of locally compact
  groups.  A \emph{Fell bundle} over~\(\cm\) is a Fell bundle
  \(\A=(\A_g)_{g\in G}\) over~\(G\) together with a strictly
  continuous group homomorphism \(\u\colon H\to \U\Mult(\A)\), such
  that
  \begin{enumerate}
  \item \label{def:FellBundle2Group1} \(\u_h\) has order \(\tcm(h)\) for all \(h\in H\), and
  \item \label{def:FellBundle2Group2} \(a\cdot
    \u_h=\u_{\acm_g(h)}\cdot a\) for all \(a\in \A_g\) and \(h\in
    H\).
  \end{enumerate}
  A Fell bundle over~\(\cm\) is \emph{saturated} if it is saturated
  as a Fell bundle over~\(G\).
\end{definition}

\begin{theorem}
  \label{theo:ActionCorrespondences=FellBundles}
  Actions of~\(\cm\) by correspondences are
  equivalent to saturated Fell bundles over~\(\cm\).
\end{theorem}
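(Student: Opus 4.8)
The plan is to reduce everything to the already known equivalence for the underlying group~\(G\) and to translate only the extra datum~\(\eta\) that records the \(H\)\nb-action. Forgetting~\(\eta\), an action \((A,\alpha,\omega,\eta)\) of~\(\cm\) is exactly a continuous action \((A,\alpha,\omega)\) of the locally compact group~\(G\) by correspondences, and the results of \cite{Buss-Meyer-Zhu:Higher_twisted} identify these with saturated Fell bundles \(\A=(\A_g)_{g\in G}\) over~\(G\) with \(\A_1=A\). I would fix this group\nobreakdash-level dictionary once and for all: the fibre~\(\A_g\) is, as a Hilbert \(A\)\nb-bimodule, the correspondence~\(\alpha_g\), and the Fell\nobreakdash-bundle multiplication corresponds to the isomorphisms~\(\omega_{g_1,g_2}\). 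Since a Fell bundle over~\(\cm\) is precisely such an~\(\A\) together with a strictly continuous homomorphism \(\u\colon H\to\U\Mult(\A)\) satisfying~\ref{def:FellBundle2Group1} and~\ref{def:FellBundle2Group2}, it suffices to show that~\(\eta\) corresponds to~\(\u\) and that the remaining coherence conditions of Definition~\ref{def:act_corr} match clause by clause those of Definition~\ref{def:FellBundle2Group}.

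The translation \(\eta\leftrightarrow\u\) is exactly Lemma~\ref{lem:MultipliersOfFellBundlesAndBimodules}. For each \(h\in H\), the isomorphism of correspondences \(\eta_h\colon A\to\alpha_{\tcm(h)}\) is a unitary adjointable operator \(\A_1\to\A_{\tcm(h)}\), hence a unitary multiplier~\(\u_h\) of order~\(\tcm(h)\) with \(\eta_h(a)=\u_h\cdot a\) for \(a\in\A_1\); condition~\ref{def:FellBundle2Group1} holds by construction, and \(\eta_1=\Id_A\) gives \(\u_1=1\). Conversely, the same lemma turns a unitary multiplier of order~\(\tcm(h)\) back into an isomorphism \(\eta_h\colon A\to\alpha_{\tcm(h)}\), and the two assignments are mutually inverse. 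So the only remaining task is to verify that the defining conditions correspond under this bijection.

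Next I would match the three diagrams. Because~\(\omega\) is the multiplication, Definition~\ref{def:act_corr}\ref{def:act_corr2} unwinds to \(\u_{h_1h_2}=\u_{h_1}\u_{h_2}\), so that~\(\u\) is a genuine group homomorphism. Diagram~\eqref{eq:CoherenceOmegasWithEtas} of~\ref{def:act_corr3} becomes the covariance relation~\ref{def:FellBundle2Group2}: for \(a\in\A_g\) the left composite sends~\(a\) through \(\eta_h\) and \(\omega_{g,\tcm(h)}\) to \(a\cdot\u_h\in\A_{g\tcm(h)}\), while the right composite sends~\(a\) through \(\eta_{\acm_g(h)}\) and \(\omega_{\tcm(\acm_gh),g}\) to \(\u_{\acm_g(h)}\cdot a\in\A_{\tcm(\acm_gh)g}\); the crossed\nobreakdash-module identity \(g\tcm(h)g^{-1}=\tcm(\acm_g(h))\) is precisely what makes these two target fibres coincide, so that the bottom edge of~\eqref{eq:CoherenceOmegasWithEtas} is an identity and the diagram asserts exactly \(a\cdot\u_h=\u_{\acm_g(h)}\cdot a\). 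Finally, the continuity condition~\ref{def:act_corr5}, that \((\eta_h)_{h\in H}\) is an isomorphism \(\Cont_0(H,A)\to\tcm^*\alpha\), is by the second half of Lemma~\ref{lem:MultipliersOfFellBundlesAndBimodules} equivalent to strict continuity of the section \(h\mapsto\u_h\). Each equivalence reads identically in both directions, so the object\nobreakdash-level correspondence is established; on arrows and \(2\)\nb-arrows one invokes the group\nobreakdash-level equivalence of \cite{Buss-Meyer-Zhu:Higher_twisted} together with the remarks preceding the theorem, namely that a transformation of \(\cm\)\nb-actions is a transformation of the underlying \(G\)\nb-actions subject to one compatibility condition with the \(H\)\nb-data, while modifications for \(G\)- and \(\cm\)\nb-actions coincide.

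I expect the main obstacle to be purely bookkeeping rather than analytic: tracking the order conventions in the tensor products \(\alpha_{g_2}\otimes_A\alpha_{g_1}\) and in the multiplier product, so that~\(\u\) comes out as an honest (not anti\nobreakdash-) homomorphism, and confirming that the two composites of~\eqref{eq:CoherenceOmegasWithEtas} land on the left and right multiplications \(a\cdot\u_h\) and \(\u_{\acm_g(h)}\cdot a\) in the correct order. Once the group\nobreakdash-level dictionary and Lemma~\ref{lem:MultipliersOfFellBundlesAndBimodules} are in hand, no new estimates are needed; the entire content is that the three diagrams~\ref{def:act_corr2}, \ref{def:act_corr3} and~\ref{def:act_corr5} translate into the homomorphism property, the covariance relation~\ref{def:FellBundle2Group2}, and the strict continuity of~\(\u\) that together constitute Definition~\ref{def:FellBundle2Group}.
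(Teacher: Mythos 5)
Your proposal is correct and follows the paper's proof essentially verbatim: forget \(\eta\) and invoke the group-level equivalence of \cite{Buss-Meyer-Zhu:Higher_twisted}*{Theorem 3.17}, translate \(\eta\) into a unitary multiplier via Lemma~\ref{lem:MultipliersOfFellBundlesAndBimodules}, match conditions \ref{def:act_corr2}, \ref{def:act_corr3} and~\ref{def:act_corr5} of Definition~\ref{def:act_corr} with the homomorphism property, the covariance relation~\ref{def:FellBundle2Group2}, and strict continuity of~\(\u\), and handle arrows and \(2\)\nb-arrows by the remarks preceding the theorem. The one bookkeeping point you flagged is resolved in the paper exactly as you anticipated: since \(\omega_{g_1,g_2}\) maps \(\alpha_{g_2}\otimes_A\alpha_{g_1}\to\alpha_{g_1g_2}\), taking \(\A_g=\alpha_g\) would reverse the order of multiplication, so the correct dictionary is \(\A_g=\alpha_{g^{-1}}\) with multiplication \(a\cdot b=\omega_{g_2^{-1},g_1^{-1}}(a\otimes b)\) and \(\u_h\leftrightarrow\eta_{h^{-1}}\), after which your clause-by-clause translation (with an approximate-unit argument to strip the auxiliary unit-fibre factors in diagram~\eqref{eq:CoherenceOmegasWithEtas}) goes through as stated.
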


\begin{proof}
  Let \((A,\alpha,\omega,\eta)\) be an action of~\(\cm\) by
  correspondences as in Definition~\ref{def:act_corr}.

  The data \((A, \alpha, \omega)\) subject to the conditions
  \ref{def:act_corr1} and~\ref{def:act_corr4} in
  Definition~\ref{def:act_corr} are an action of the locally compact
  group~\(G\) by correspondences.  We have simplified this compared
  to the definition in~\cite{Buss-Meyer-Zhu:Higher_twisted} by
  requiring the unit transformation \(A\to\alpha_1\) to be the
  identity.  It is shown as in the proof of
  \cite{Buss-Meyer-Zhu:Higher_twisted}*{Lemma 3.7} that any action
  of~\(G\) by correspondences is equivalent to one with this extra
  property.  Thus our data \((A,\alpha,\omega)\) is equivalent to a
  saturated Fell bundle over~\(G\) by
  \cite{Buss-Meyer-Zhu:Higher_twisted}*{Theorem 3.17}.  Its fibres
  are \(\A_g=\alpha_{g^{-1}}\); the multiplication
  \(\A_{g_1}\times\A_{g_2}\to \A_{g_1g_2}\) is \(a\cdot b\defeq
  \omega_{g_2^{-1},g_1^{-1}}(a\otimes b)\); the involution is the
  unique one for which the inner product on \(\A_g=\alpha_{g^{-1}}\)
  has the expected form: \(\langle x,y\rangle = x^*\cdot y\).

  An isomorphism of right Hilbert \(A\)\nb-modules
  \(\eta_{h^{-1}}\colon A\to \alpha_{\tcm(h^{-1})}= \A_{\tcm(h)}\)
  is equivalent to a unitary multiplier~\(\u_h\) of~\(\A\) of
  order~\(\tcm(h)\) by
  Lemma~\ref{lem:MultipliersOfFellBundlesAndBimodules}.  We claim
  that the conditions in Definition~\ref{def:FellBundle2Group} for
  this map \(\u\colon H\to\U\Mult(\A)\) are equivalent to the
  conditions \ref{def:act_corr2}, \ref{def:act_corr3}
  and~\ref{def:act_corr5} in Definition~\ref{def:act_corr}.

  The condition \(\omega_{\tcm(h_1),\tcm(h_2)}
  (\eta_{h_2}\otimes_A\eta_{h_1}) = \eta_{h_1h_2}\) is equivalent
  to~\(\u\) being a group homomorphism.  The
  diagram~\eqref{eq:CoherenceOmegasWithEtas} commutes if and only if
  \(\u_h^*\cdot a_1\cdot a_g\cdot a_1' = a_1\cdot a_g\cdot
  \u_{\acm_g^{-1}(h)}^*\cdot a_1'\) for all \(a_1,a_1'\in \A_1\),
  \(a_g\in \A_g\), \(g\in G\), \(h\in H\); here we use
  \(\A_1\cdot\A_g\cdot\A_1=\A_g\) and that \(\u_h^*=\u_{h^{-1}}\).
  Letting \(a_1\) and~\(a_1'\) run through approximate units, we get
  \(\u_h^*\cdot a_g = a_g\cdot\u_{\acm_g^{-1}(h)}^*\) for all \(h\in H\),
  \(g\in G\), \(a_g\in\A_g\), which is equivalent to
  condition~\ref{def:FellBundle2Group2} in
  Definition~\ref{def:FellBundle2Group}.

  The equivalence of the continuity conditions in Definitions
  \ref{def:act_corr} and~\ref{def:FellBundle2Group} follows as in
  the proof of the second statement in
  Lemma~\ref{lem:MultipliersOfFellBundlesAndBimodules}.
\end{proof}

Since actions by automorphisms may be viewed as actions by
correspondences,
Theorem~\ref{theo:ActionCorrespondences=FellBundles} implies that
weak actions by automorphisms as considered in
\cite{Buss-Meyer-Zhu:Higher_twisted}*{Section~4.1.1} give rise to
saturated Fell bundles.  We make this more explicit for strict
actions:

\begin{example}
  \label{exa:StrictToFellBundles}
  Let \((\alpha,u)\) be an action of \(\cm=(G,H,\tcm,\acm)\) by
  \Star{}automorphisms on a \cstar{}algebra~\(A\) (as defined in
  \cite{Buss-Meyer-Zhu:Non-Hausdorff_symmetries}*{Definition~3.1}).
  That is, \(\alpha\colon G\to \Aut(A)\) is a (strongly continuous)
  action of~\(G\) on~\(A\) by \Star{}automorphisms, \(u\colon H\to
  \U\Mult(A)\) is a strictly continuous group homomorphism, and
  \begin{enumerate}
  \item \(\alpha_{\tcm(h)}(a)=u_hau_h^*\) for all \(a\in A\) and \(h\in H\); and
  \item \(\alpha_g(u_h)=u_{\acm_g(h)}\) for all \(g\in G\) and \(h\in H\).
  \end{enumerate}
  Let \(\A=A\times_{\alpha} G\) be the semidirect product Fell
  bundle over~\(G\) for the action~\(\alpha\) (see
  \cite{Doran-Fell:Representations_2}*{VIII.4}).  Its operations are
  given by
  \[
  (a,f)\cdot(b,g)=(a\alpha_f(b),fg)
  \quad\text{and}\quad
  (a,g)^*=(\alpha_{g^{-1}}(a^*),g^{-1})
  \]
  for all \(a,b\in A\) and \(f,g\in G\).  The multiplier Fell bundle
  \(\Mult(\A)\) is isomorphic to the semidirect product Fell bundle
  \(\Mult(A)\times_\alpha G\), where~\(\alpha\) is tacitly extended
  to a \(G\)\nb-action on~\(\Mult(A)\).  Therefore, the formulas
  \[
  \u_h\cdot (a,g)\defeq (a u_h^*,\tcm(h)g)
  \quad\text{and}\quad
  (a,g)\cdot \u_h\defeq  (au_{\acm_g(h)}^*,g\tcm(h))
  \]
  for \(h\in H\), \(g\in G\), \(a\in A\) define a unitary
  multiplier~\(\u_h\) of order~\(\tcm(h)\) of~\(\A\), where
  \(\u_h=(u_h^*,\tcm(h))\).  The pair~\((\A,\u)\) is a (saturated)
  Fell bundle over~\(\cm\) in the above sense.
\end{example}

Next we translate transformations between \(\cm\)\nb-actions into
correspondences between Fell bundles over~\(\cm\).  Let
\((\A,\u^\A)\) and \((\B,\u^\B)\) be Fell bundles over~\(\cm\).  The
following definition is an extension of
\cite{Buss-Meyer-Zhu:Higher_twisted}*{Definition 3.21} from groups
to \(2\)\nb-groups (that is, crossed modules).  It should also be
compared with the notion of equivalence between Fell bundles over
groupoids appearing in \cites{Yamagami:IdealStructure,
  Muhly:BundlesGroupoids, Muhly-Williams:Equivalence.FellBundles}.

\begin{definition}
  \label{def:corr_Fell_bundle}
  A \emph{\(\cm\)\nb-equivariant correspondence} from \((\A,\u^\A)\)
  and \((\B,\u^\B)\) is a continuous Banach bundle
  \(\Hilm=(\Hilm_g)_{g\in G}\) over~\(G\) together with
  \begin{itemize}
  \item a continuous multiplication \(\A\times\Hilm\to\Hilm\) that
    maps \(\A_{g_1}\times\Hilm_{g_2}\) to~\(\Hilm_{g_1g_2}\);
  \item a continuous multiplication \(\Hilm\times\B\to\Hilm\) that
    maps \(\Hilm_{g_1}\times\B_{g_2}\) to~\(\Hilm_{g_1g_2}\);
  \item a continuous inner product
    \(\langle\blank,\blank\rangle\colon \Hilm\times\Hilm\to\B\) that
    maps \(\Hilm_{g_1}\times\Hilm_{g_2}\) to~\(\B_{g_1^{-1}g_2}\);
  \end{itemize}
  these must satisfy
  \begin{enumerate}
  \item associativity \(x\cdot (y\cdot z) = (x\cdot y)\cdot z\) for
    \((x,y,z)\) in \(\A\times \A\times\Hilm\), \(\Hilm\times \B\times
    \B\), and \(\A\times \Hilm\times \B\);
  \item \(\A_1\cdot\Hilm_g=\Hilm_g=\Hilm_g\cdot\B_1\) for all \(g\in
    G\);
  \item \(\xi_2\mapsto \langle\xi_1,\xi_2\rangle\) is fibrewise
    linear for all \(\xi_1\in\Hilm\) and
    \(\langle\xi_1,\xi_2\rangle^* = \langle\xi_2,\xi_1\rangle\) for
    all \(\xi_1,\xi_2\in\Hilm\);
  \item \(\langle \xi_1,\xi_2\cdot b_2\rangle = \langle
    \xi_1,\xi_2\rangle b_2\) for all \(b_2\in\B\),
    \(\xi_1,\xi_2\in\Hilm\);
  \item \(\langle\xi,\xi\rangle\ge0\) in~\(\B_1\) for all
    \(\xi\in\Hilm\), and \(\norm{\xi}^2=
    \norm{\langle\xi,\xi\rangle}\);
  \item \(\langle a\xi_1,\xi_2\rangle = \langle
    \xi_1,a^*\xi_2\rangle\) for all \(a\in\A\),
    \(\xi_1,\xi_2\in\Hilm\);
  \item \label{def:corr_Fell_bundle_central}
    \(\u_{\acm_g(h)}^\A\cdot\xi = \xi\cdot\u_h^\B\) for \(h\in H\),
    \(g\in G\), \(\xi\in\Hilm_g\).
  \end{enumerate}
  An \emph{isomorphism of correspondences} is a homeomorphism
  \(\Hilm\to\Hilm'\) that is compatible with the left and right
  multiplication maps and the inner products.
\end{definition}

\begin{theorem}
  \label{the:Fell_bundle_correspondences}
  Let \((\A,\u^\A)\) and~\((\B,\u^\B)\) be saturated Fell bundles
  over~\(\cm\).  The groupoid of Fell bundle correspondences
  from~\((\A,\u^\A)\) to~\((\B,\u^\B)\) and isomorphisms between
  such correspondences is equivalent to the groupoid of
  transformations and modifications between the \(\cm\)\nb-actions
  associated to \((\A,\u^\A)\) and~\((\B,\u^\B)\).
\end{theorem}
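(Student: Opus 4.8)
The plan is to reuse the two-step strategy from the proof of Theorem~\ref{theo:ActionCorrespondences=FellBundles}: forget everything attached to~\(H\) and invoke the group-level equivalence between transformations and Fell bundle correspondences, and then match the single remaining \(H\)-condition on each side. Concretely, a transformation \((\Hilm,\chi_g)\) from \((A,\alpha,\omega^A,\eta^A)\) to \((B,\beta,\omega^B,\eta^B)\) that is required to satisfy only conditions~\ref{def:transformation2} and~\ref{def:transformation3} is exactly a transformation between the underlying \(G\)\nb-actions by correspondences. By \cite{Buss-Meyer-Zhu:Higher_twisted}*{Definition 3.21} and the equivalence established there for locally compact groups, such a transformation is the same datum as a Fell bundle correspondence over~\(G\) between the Fell bundles \(\A\) and~\(\B\) built in Theorem~\ref{theo:ActionCorrespondences=FellBundles}. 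The fibres are \(\Hilm_g=\alpha_{g^{-1}}\otimes_A\Hilm\), identified with \(\Hilm\otimes_B\beta_{g^{-1}}\) through~\(\chi_{g^{-1}}\); the left \(\A\)\nb-multiplication, right \(\B\)\nb-multiplication and \(\B\)\nb-valued inner product are assembled from \(\omega^A\), \(\omega^B\) and the inner product of~\(\Hilm\), and conditions~\ref{def:transformation2} and~\ref{def:transformation3} become the associativity and continuity axioms of Definition~\ref{def:corr_Fell_bundle}. Since this reproduces the group case verbatim, I would only recall the construction and cite~\cite{Buss-Meyer-Zhu:Higher_twisted} rather than redo it.

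The genuinely new point is to identify the extra transformation axiom~\ref{def:transformation1} with the centrality axiom~\ref{def:corr_Fell_bundle_central}. Recall from the proof of Theorem~\ref{theo:ActionCorrespondences=FellBundles} that \(\u^\A_h\) and~\(\u^\B_h\) are the unitary multipliers corresponding to \(\eta^A_{h^{-1}}\) and~\(\eta^B_{h^{-1}}\). Unwinding the canonical isomorphisms together with~\(\chi_{\tcm(h)}\) in the diagram~\eqref{eq:transformation_H_natural}, its commutativity for all \(h\in H\) becomes the relation \(\u^\A_h\cdot\xi=\xi\cdot\u^\B_h\) for all \(h\in H\) and all \(\xi\in\Hilm_1\); this is precisely the \(g=1\) instance of~\ref{def:corr_Fell_bundle_central}, since \(\acm_1=\Id_H\). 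To propagate it to arbitrary \(g\in G\), I would use saturation to write \(\xi=a\cdot\xi_1\) with \(a\in\A_g\) and \(\xi_1\in\Hilm_1\) (possible because \(\A_g\cdot\Hilm_1=\Hilm_g\)) and compute
\[
\u^\A_{\acm_g(h)}\cdot\xi
= \u^\A_{\acm_g(h)}\cdot a\cdot\xi_1
= a\cdot\u^\A_h\cdot\xi_1
= a\cdot\xi_1\cdot\u^\B_h
= \xi\cdot\u^\B_h,
\]
where the second equality uses the Fell bundle relation~\ref{def:FellBundle2Group2} for~\((\A,\u^\A)\) and the third uses the \(g=1\) case. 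Conversely, restricting~\ref{def:corr_Fell_bundle_central} to \(g=1\) returns condition~\ref{def:transformation1}. I expect this step to be the main obstacle: the bookkeeping needed to reduce~\eqref{eq:transformation_H_natural} to the clean relation on~\(\Hilm_1\) is delicate, whereas the propagation from order~\(\tcm(h)\) to all orders is the only place where the crossed module structure of~\(\A\) itself enters, exactly mirroring how~\eqref{eq:CoherenceOmegasWithEtas} was handled in the earlier proof.

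Finally, modifications for \(G\)\nb- and \(\cm\)\nb-actions coincide, so the group-level equivalence of~\cite{Buss-Meyer-Zhu:Higher_twisted} already identifies them with isomorphisms of the underlying Fell bundle correspondences over~\(G\); one only checks that such an isomorphism automatically respects the actions of the multipliers \(\u^\A\) and~\(\u^\B\), which is immediate from~\eqref{eq:modification} at order~\(\tcm(h)\) combined with the identifications of the previous paragraph. Assembling the resulting bijections on objects and on arrows, and observing that both assignments are functorial and mutually inverse up to the canonical isomorphisms, yields the asserted equivalence of groupoids.
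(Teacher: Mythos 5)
Your proposal is correct and takes essentially the same route as the paper's proof: the paper likewise assembles the bundle \(\Hilm_g=\A_g\otimes_A\Hilm_1\cong\Hilm_1\otimes_B\B_g\) from the group-level data (with condition~\ref{def:transformation2} giving the bimodule associativity and condition~\ref{def:transformation3} the topology), and it handles the \(H\)\nb-part by exactly your argument, reducing~\eqref{eq:transformation_H_natural} to \(\u_h^\A\cdot\xi_1=\xi_1\cdot\u_h^\B\) on the unit fibre and propagating to~\(\Hilm_g\) by writing \(\xi=a\xi_1\) and invoking condition~\ref{def:FellBundle2Group2} of Definition~\ref{def:FellBundle2Group}. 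The only cosmetic difference is that the paper writes out the construction of the correspondence, its inverse, and the treatment of modifications via their restriction to~\(\Hilm_1\) in detail instead of citing the group case from~\cite{Buss-Meyer-Zhu:Higher_twisted}.
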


\begin{proof}
  Let \((\Hilm_1,\chi_g)\) be a transformation from the
  \(\cm\)\nb-action corresponding to~\(\A\) to the \(\cm\)\nb-action
  corresponding to~\(\B\).  This contains isomorphisms
  \[
  \Hilm_g \defeq \A_g\otimes_A \Hilm_1
  \xrightarrow[\cong]{\chi_{g^{-1}}}
  \Hilm_1 \otimes_B \B_g
  \]
  for \(g\in G\).  The notation~\(\Hilm_g\) leads to no serious
  confusion for \(g=1\) because \(\A_1\otimes_A \Hilm_1 = A\otimes_A
  \Hilm_1 \cong\Hilm_1\) canonically.  The spaces~\(\Hilm_g\) are
  the fibres of a correspondence \(\Gamma_0(\A)\otimes_A\Hilm_1\)
  from \(\Cont_0(G,A)\) to \(\Cont_0(G,B)\); the continuity
  of~\(\chi\) gives \(\Gamma_0(\A)\otimes_A \Hilm_1\cong
  \Hilm_1\otimes_B \Gamma_0(\B)\).  We may topologise
  \(\Hilm\defeq\bigcup_{g\in G} \Hilm_g\) so that
  \[
  \Gamma_0(\Hilm)
  = \Gamma_0(\A)\otimes_A\Hilm_1
  \cong \Hilm_1\otimes_B \Gamma_0(\B).
  \]

  The multiplications in \(\A\) and~\(\B\) induce continuous
  multiplication maps \(\A\times\Hilm\to\Hilm\)
  \(\Hilm\times\B\to\Hilm\), defining \(\Hilm_g\) as \(\A_g\otimes_A
  \Hilm_1\) and \(\Hilm_1\otimes_B \B_g\), respectively.  These
  multiplications satisfy \((a_1a_2)\xi=a_1(a_2\xi)\) for all
  \(a_1,a_2\in\A\), \(\xi\in\Hilm\) and \(\xi(b_1b_2)= (\xi b_1)b_2\)
  for all \(\xi\in\Hilm\), \(b_1,b_2\in\B\).  The bimodule property
  \((a\xi)b=a(\xi b)\) for all \(a\in\A\), \(\xi\in\Hilm\), \(b\in\B\)
  is equivalent to~\eqref{eq:twisted_conjugacy_coherence} by a routine
  computation.  The nondegeneracy of correspondences implies
  \(\Hilm_g=\A_1\cdot\Hilm_g = \Hilm_g\cdot\B_1\).

  The \(B\)\nb-valued inner product on~\(\Hilm\) induces inner
  product maps
  \[
  \langle\blank,\blank\rangle\colon
  \conj{\Hilm}_{g_1}\times\Hilm_{g_2}\to \B_{g_1^{-1}g_2},
  \]
  where~\(\conj{\Hilm}_{g_1}\) denotes the conjugate space
  of~\(\Hilm_{g_1}\) and \(\langle \xi_1\otimes b_1,\xi_2\otimes
  b_2\rangle \defeq b_1^* \langle \xi_1,\xi_2\rangle_B b_2\) for
  \(\xi_1,\xi_2\in\Hilm\), \(b_1\in\B_{g_1}\), \(b_2\in\B_{g_2}\),
  and we identify \(B=\B_1\).  The required Hilbert module
  properties of this inner product are routine to check.

  A unitary multiplier~\(u\) of~\(\A\) of degree \(g\in G\) gives an
  adjointable map \(\A_{g_2}\to\A_{gg_2}\), which induces an
  adjointable map \(\Hilm_{g_2}\to \Hilm_{gg_2}\), \(\xi\mapsto
  u\cdot\xi\).  Similarly, a unitary multiplier~\(v\) of~\(\B\) of degree
  \(g\in G\) induces maps \(\Hilm_{g_2}\to\Hilm_{g_2g}\), \(\xi\mapsto
  \xi\cdot v\), using the map \(\B_{g_2}\to\B_{g_2g}\).  In
  particular, \(\u_h^\A\) and~\(\u_h^\B\) act on~\((\Hilm_g)\) by left
  and right multiplication maps.  The
  condition~\eqref{eq:transformation_H_natural} for a transformation
  is equivalent to \(\u_h^\A\cdot\xi = \xi\cdot \u_h^\B\) for all
  \(h\in H\), \(\xi\in \Hilm\).  For \(\xi\in\Hilm_g\)
  and \(g\in G\), this implies
  \[
  \u_{\acm_g(h)}^\A\cdot\xi = \xi\cdot \u_h^\B.
  \]
  To see this, write \(\xi=a\xi_1\) with \(a\in\A_g\),
  \(\xi_1\in\Hilm_1\), and compute \(a\xi_1\cdot \u_h^B = a \u_h^\A
  \xi_1 = \u_{\acm_g(h)}^\A\cdot a\xi_1\) using
  condition~\ref{def:FellBundle2Group2} in
  Definition~\ref{def:FellBundle2Group}.  Thus a transformation
  between actions of~\(\cm\) by correspondences yields a
  correspondence between the associated Fell bundles.

  To show the converse, take a correspondence of Fell
  bundles~\(\Hilm\).  Then~\(\Hilm_g\) is a Hilbert module
  over~\(\B_1\), and the multiplication maps
  \(\Hilm_1\otimes_{\B_1} \B_g\to\Hilm_g\) must be unitary.  These
  maps are the fibres of a continuous map, so that \(\Hilm\cong
  \Hilm_1\otimes_{\B_1}\B\).  The left multiplication gives
  unitaries
  \[
  \A_g\otimes_{\A_1}\Hilm_1\cong\Hilm_g \cong \Hilm_1\otimes_{\B_1}\B_g.
  \]
  These yield the maps~\(\chi_{g^{-1}}\colon \A_g\otimes_{\A_1}\Hilm_1\congto\Hilm_1\otimes_{\B_1}\B_g\).
  Reversing the computations above, it is then routine to show that \((\Hilm_1,\chi_g)\) is a transformation of
  \(\cm\)\nb-actions.

  A modification between transformations consists of an isomorphism of
  correspondences \(\Hilm\to\Hilm'\), which then induces
  isomorphisms \(\Hilm_g=\Hilm\otimes_B \B_g\cong \Hilm'\otimes_B
  \B_g=\Hilm'_g\).  The compatibility with~\(\chi_g\) shows that these
  maps preserve also the left module structure, hence give an
  isomorphism of Fell bundle correspondences.  Conversely, an
  isomorphism of Fell bundle correspondences is determined by its
  action on the unit fibre~\(\Hilm_1\), which is a unitary~\(W\) that
  satisfies the compatibility condition~\eqref{eq:modification}.

  Thus our constructions of Fell bundle correspondences from
  \(\cm\)\nb-actions and vice versa are fully faithful functors
  between the respective categories.  These functors are inverse to
  each other up to natural isomorphisms.
\end{proof}

The invertible transformations between \(\cm\)\nb-actions are those
where~\(\Hilm\) is an imprimitivity bimodule.  In this case,
\(\Hilm\) carries an \(\A\)\nb-valued left inner product as well
such that \(_\A\langle \xi,\eta\rangle\cdot \zeta =\xi\cdot \langle
\eta, \zeta\rangle_\B\) for all \(\xi,\eta,\zeta\in\Hilm\).  This
leads to an appropriate definition of a \emph{Morita--Rieffel
  equivalence} between Fell bundles over~\(\cm\).  The functoriality
of crossed products implies that Morita--Rieffel equivalent Fell
bundles over~\(\cm\) have Morita--Rieffel equivalent crossed
products.

Now we specialise to a transformation \((\Hilm_1,\chi)\) from \(A\)
to~\(B\) where the underlying right Hilbert module of~\(\Hilm_1\)
is~\(B\) itself.  Thus the left \(A\)\nb-action on~\(\Hilm_1\) is a
nondegenerate \Star{}homomorphism \(A\to\Mult(B)\).

\begin{proposition}
  \label{pro:transformation_free}
  Transformations \((\Hilm_1,\chi)\) between two \(\cm\)\nb-algebras
  \(A\) and~\(B\) with \(\Hilm_1=B\) as a Hilbert \(B\)\nb-module
  correspond to morphisms between the Fell bundles \(\A\) and~\(\B\)
  over~\(\cm\) associated to \(A\) and~\(B\), that is, to maps
  \(f\colon \A\to\Mult(\B)\)
  \begin{itemize}
  \item that are fibrewise linear;
  \item satisfy \(f(a^*)=f(a)^*\) and
    \(f(a_1\cdot a_2)=f(a_1)\cdot f(a_2)\) for all
    \(a,a_1,a_2\in\A\);
  \item that are nondegenerate \textup(\(f(\A_1)\cdot
    \B_1=\B_1\)\textup);
  \item satisfy \(f(\u_h^\A)=\u_h^\B\) for all \(h\in H\), where nondegeneracy has been used to extend \(f\) to multipliers; and
  \item that are strictly continuous in the sense that the map
    \(\A\times\B\to\B\), \((a,b)\mapsto f(a)\cdot b\), is
    continuous.
  \end{itemize}
  The joint continuity in the last condition is equivalent to the
  continuity of the maps \(a\mapsto f(a)\cdot b\) for all
  \(b\in\B_1\).
\end{proposition}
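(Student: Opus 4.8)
The plan is to read the proposition as the special case of Theorem~\ref{the:Fell_bundle_correspondences} in which the underlying Hilbert module is the algebra itself, and to check that the Fell bundle correspondences arising this way are exactly the morphisms \(f\colon\A\to\Mult(\B)\) described. First I would set up the two mutually inverse constructions. Starting from a transformation \((\Hilm_1,\chi)\) with \(\Hilm_1=B\) as a right Hilbert \(B\)-module, Theorem~\ref{the:Fell_bundle_correspondences} produces a Fell bundle correspondence with fibres \(\Hilm_g\cong\Hilm_1\otimes_{\B_1}\B_g\); since \(\Hilm_1=\B_1\) this gives a canonical identification \(\Hilm_g\cong\B_g\), so \(\Hilm\cong\B\) as a right Hilbert \(\B\)-module bundle. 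Restricting the left \(\A\)-action to the unit fibre sends \(a\in\A_g\) to the map \(\B_1\to\B_g\), \(\xi\mapsto a\cdot\xi\), which is adjointable with adjoint \(\xi\mapsto a^*\cdot\xi\) by condition~(6) of Definition~\ref{def:corr_Fell_bundle}; by Lemma~\ref{lem:MultipliersOfFellBundlesAndBimodules} this is an element \(f(a)\in\Mult(\B)_g\). Conversely, given \(f\), I would set \(\Hilm\defeq\B\) with right \(\B\)-action the Fell bundle multiplication and left \(\A\)-action \(a\cdot\xi\defeq f(a)\cdot\xi\); because \(\B\) is saturated the action on all of \(\B\) is determined by its restriction to \(\B_1\) through \(a\cdot(\xi_1\cdot c)=(a\cdot\xi_1)\cdot c\), so the two passages are inverse to each other.

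Next I would match the defining properties. Fibrewise linearity, multiplicativity \(f(a_1\cdot a_2)=f(a_1)f(a_2)\), and \(f(a^*)=f(a)^*\) are precisely linearity, the associativity \(a_1\cdot(a_2\cdot\xi)=(a_1 a_2)\cdot\xi\), and the inner-product compatibility \(\langle a\xi_1,\xi_2\rangle=\langle\xi_1,a^*\xi_2\rangle\) of a Fell bundle correspondence; nondegeneracy \(f(\A_1)\cdot\B_1=\B_1\) is the axiom \(\A_1\cdot\Hilm_1=\Hilm_1\). That \(f\) is contractive follows from \(\norm{f(a)}^2=\norm{f(a^* a)}\le\norm{a^* a}=\norm{a}^2\) using the \(C^*\)-identity in the fibres, and this is what allows \(f\) to be extended to multipliers. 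Under the translation of Theorem~\ref{the:Fell_bundle_correspondences}, the coherence diagram~\eqref{eq:transformation_H_natural} becomes \(\u_h^\A\cdot\xi=\xi\cdot\u_h^\B\) for all \(\xi\in\Hilm=\B\), that is, \(f(\u_h^\A)\cdot\xi=\xi\cdot\u_h^\B\).

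The subtle step — and the one I expect to be the main obstacle — is to pass from this to the clean identity \(f(\u_h^\A)=\u_h^\B\), because read literally the equation compares the \emph{left} multiplier action of \(f(\u_h^\A)\) with the \emph{right} multiplier action of \(\u_h^\B\), and \(\u_h^\B\) is not central. The trick is to restrict to the unit fibre \(\xi\in\B_1\) and apply condition~\ref{def:FellBundle2Group2} of Definition~\ref{def:FellBundle2Group} to \(\B\) with \(g=1\): since \(\acm_1=\Id\), this yields \(\xi\cdot\u_h^\B=\u_{\acm_1(h)}^\B\cdot\xi=\u_h^\B\cdot\xi\), hence \(f(\u_h^\A)\cdot\xi=\u_h^\B\cdot\xi\) for all \(\xi\in\B_1\); equality of the two multipliers then follows since a multiplier is determined by its left action on \(\B_1\) (Lemma~\ref{lem:MultipliersOfFellBundlesAndBimodules}). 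Running this computation backwards shows that \(f(\u_h^\A)=\u_h^\B\) together with condition~\ref{def:FellBundle2Group2} recovers the intertwining axiom~\ref{def:corr_Fell_bundle_central}, so the equivalence respects the \(H\)-data in both directions. Finally, for the continuity clause I would observe that joint continuity of \((a,b)\mapsto f(a)\cdot b\) trivially restricts to continuity of \(a\mapsto f(a)\cdot b\) for \(b\in\B_1\); conversely, writing an arbitrary \(b\in\B\) as a limit of sums \(b_0\cdot c\) with \(b_0\in\B_1\), \(c\in\B\) (possible because \(\B\) is saturated) and using \(f(a)\cdot(b_0 c)=(f(a)\cdot b_0)\cdot c\) together with the uniform bound \(\norm{f(a)}\le\norm{a}\) and continuity of the multiplication of \(\B\), a standard three-\(\varepsilon\) argument upgrades the fibrewise continuity to joint continuity.
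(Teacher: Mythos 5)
Your proposal is correct and follows the same route as the paper's proof: specialise Theorem~\ref{the:Fell_bundle_correspondences} to the case \(\Hilm\cong\B\) via the unitary \(\Hilm_1\otimes_{\B_1}\B\cong\Hilm\), identify the left action with a map \(f\colon\A\to\Mult(\B)\) through Lemma~\ref{lem:MultipliersOfFellBundlesAndBimodules}, match the axioms of Definition~\ref{def:corr_Fell_bundle} with the listed properties of~\(f\), and settle the continuity clause via \(\norm{f(a)}\le\norm{a}\). The paper compresses the axiom-matching into one sentence, whereas you spell out the only delicate point --- deducing \(f(\u_h^\A)=\u_h^\B\) from condition~\ref{def:corr_Fell_bundle_central} by restricting to the unit fibre, where \(\acm_1=\Id\) makes \(\u_h^\B\) commute with \(\B_1\), and invoking uniqueness of multiplier extensions --- which is a faithful elaboration of, not a departure from, the paper's argument.
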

\begin{proof}
  Let~\(\Hilm\) be the Fell bundle correspondence associated to a
  transformation \((\Hilm_1,\chi)\).  We assume that
  \(\Hilm_1=\B_1=B\) as a right Hilbert \(\B_1\)\nb-module.  This
  gives an isomorphism of Banach bundles \(\Hilm\cong\B\) that is
  right \(\B\)\nb-linear and unitary for the \(\B\)\nb-valued inner
  product because the multiplication map \((\xi,b)\mapsto \xi\cdot
  b\) induces a unitary \(\Hilm_1\otimes_{\B_1} B\cong \Hilm\).
  Conversely, a Fell bundle correspondence with \(\Hilm=\B\) as
  right Hilbert \(\B\)\nb-module has \(\Hilm_1=\B_1\) as a right
  Hilbert \(\B_1\)\nb-module.

  A Fell bundle correspondence with \(\Hilm=\B\) as a right Hilbert
  \(\B\)\nb-module is the same as a continuous multiplication
  \(\A\times\B\to\B\) with the properties in
  Definition~\ref{def:corr_Fell_bundle}.  These properties are
  equivalent to~\(f\) being a morphism \(\A\to\Mult(\B)\) of Fell
  bundles over~\(\cm\) as defined in the statement of the
  proposition.  The equivalence of joint and separate continuity
  follows because \(\norm{f(a)}\le\norm{a}\) for all \(a\in\A\).
\end{proof}

Finally, we specialise to representations of \(\cm\)\nb-actions.
These are, by definition, transformations to a
\(\Cst\)\nb-algebra~\(B\) equipped with the trivial
\(\cm\)\nb-action.  The trivial \(\cm\)\nb-action on~\(B\)
corresponds to the constant Fell bundle with fibre~\(B\) and
\(\u_h^B=1_B\) for all \(h\in H\).  A morphism of Fell bundles
from \((\A,\u)\) to such a constant Fell bundle is a map
\(\rho\colon \A\to \Mult(B)\) such that
\begin{enumerate}
\item \(\rho\) restricts to linear maps \(\A_g\to \Mult(B)\) for
  all \(g\in G\);
\item \(\rho(a^*)=\rho(a)^*\) and \(\rho(a_1\cdot
  a_2)=\rho(a_1)\cdot \rho(a_2)\) for all \(a,a_1,a_2\in \A\);
\item for each \(b\in B\), the map \(a\mapsto \rho(a)b\) is
  continuous from~\(\A\) to~\(B\) (continuity);
\item \(\rho(\A_1)\cdot B=B\) (nondegeneracy);
\item \(\rho(\u_h^\A)=1\) for all \(h\in H\);
\end{enumerate}
the last condition involves the canonical extension of a
representation of a Fell bundle to multipliers.  Except for the last
condition, this is the standard definition of a representation of a
Fell bundle (see \cite{Doran-Fell:Representations_2}).

We are going to describe the crossed product of a \(\cm\)\nb-action
by correspondences as a quotient of the cross-sectional
\(\Cst\)\nb-algebra of the associated Fell bundle, generalising the
description in~\cite{Buss-Meyer-Zhu:Non-Hausdorff_symmetries} for
strict \(\cm\)\nb-actions.  The cross-sectional \cstar{}algebra
\(\Cst(\A)\) of a Fell bundle~\(\A\) over the locally compact
group~\(G\) is constructed in
\cite{Doran-Fell:Representations_2}*{Chapter~VIII} as the
\(\Cst\)\nb-completion of the space~\(\contc(\A)\) of compactly
supported sections of~\(\A\) with a suitable convolution and
involution.  It comes with a canonical map \(\rho^\un\colon
\A\to\Mult(\Cst(\A))\), which is a representation of~\(\A\) viewed
as a \(G\)\nb-\(\Cst\)-algebra (via
Theorem~\ref{theo:ActionCorrespondences=FellBundles}).

\begin{definition}
  \label{def:CrossSectionalAlgebraForFellBundlesOverCrossedModules}
  For a Fell bundle~\((\A,\u)\) over~\(\cm\), let~\(\I_\u\) be the
  closed, two-sided ideal of~\(\Cst(\A)\) generated by the
  multipliers \(\{\rho^\un(\u_h)-1: h\in H\}\), that is,
  \[
  \I_\u\defeq \cspn\{x(\rho^\un(\u_h)-1)y:x,y\in \Cst(\A), h\in H\}.
  \]
  We call \(\Cst(\A,\u)\defeq \Cst(\A)/\I_\u\) the
  \emph{cross-sectional \cstar{}algebra} of~\((\A,\u)\).
\end{definition}

Since \(\rho(\u_h)=1\) is the only extra condition needed for a
representation of~\(\A\) to be a representation of \((\A,\u)\),
\(\Cst(\A,\u)\) is the largest quotient of~\(\Cst(\A)\) on
which~\(\rho^\un\) gives a representation of~\((\A,\u)\) as a
\(\cm\)\nb-\(\Cst\)-algebra.

\begin{proposition}
  \label{pro:crossed_product}
  The \cstar{}algebra \(\Cst(\A,\u)\defeq \Cst(\A)/\I_\u\) with the
  canonical representation of~\(\A\) is a crossed product
  \((\A,\u)\rtimes \cm\).  That is, representations of
  \(\Cst(\A,\u)\) correspond bijectively to representations of
  \((\A,\u)\).
\end{proposition}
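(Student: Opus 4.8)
The plan is to deduce the universal property of Definition~\ref{def:crossed_product} from the well-known universal property of the cross-sectional \(\Cst\)\nb-algebra of a Fell bundle over the group~\(G\).  Recall that nondegenerate morphisms \(\tilde\rho\colon\Cst(\A)\to\Mult(C)\) are in natural bijection with representations of the Fell bundle~\(\A\) over~\(G\), that is, with maps \(\rho\colon\A\to\Mult(C)\) satisfying conditions \((1)\)--\((4)\) in the list preceding Definition~\ref{def:CrossSectionalAlgebraForFellBundlesOverCrossedModules}.  This bijection is given by \(\rho=\tilde\rho\circ\rho^\un\), where we tacitly extend~\(\tilde\rho\) to multipliers; in particular the canonical multiplier extension of~\(\tilde\rho\) carries \(\rho^\un(\u_h)\in\Mult(\Cst(\A))\) to \(\rho(\u_h)\in\Mult(C)\).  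By the discussion preceding the proposition, a representation of the Fell bundle \((\A,\u)\) over~\(\cm\)---equivalently, a representation of the associated \(\cm\)\nb-action---is exactly such a~\(\rho\) that in addition satisfies condition~\((5)\), namely \(\rho(\u_h)=1\) for all \(h\in H\).  It therefore suffices to show that, under the bijection above, the condition \(\rho(\u_h)=1\) for all~\(h\) is equivalent to~\(\tilde\rho\) factoring through the quotient \(\Cst(\A,\u)=\Cst(\A)/\I_\u\).

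First I would treat the direction that follows immediately from the construction of~\(\I_\u\).  If \(\rho(\u_h)=1\) for all~\(h\), then the multiplier extension of~\(\tilde\rho\) annihilates each element \(\rho^\un(\u_h)-1\), hence annihilates every product \(x(\rho^\un(\u_h)-1)y\) with \(x,y\in\Cst(\A)\), and so, by continuity, the whole ideal~\(\I_\u\).  Thus~\(\tilde\rho\) descends to a morphism on \(\Cst(\A,\u)\).

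The converse is where the only real work lies.  Starting from \(\tilde\rho(\I_\u)=0\), I would try to recover \(\rho(\u_h)=1\).  The obstacle is that \(\rho^\un(\u_h)-1\) is merely a multiplier of~\(\Cst(\A)\), not an element, so the hypothesis directly gives only
\[
\tilde\rho(x)\,\bigl(\rho(\u_h)-1\bigr)\,\tilde\rho(y)=0
\qquad\text{for all }x,y\in\Cst(\A),
\]
after applying the multiplier extension of~\(\tilde\rho\) to the generator \(x(\rho^\un(\u_h)-1)y\).  Here I would invoke nondegeneracy of~\(\tilde\rho\): letting first~\(y\) and then~\(x\) run through an approximate unit of~\(\Cst(\A)\), their images converge strictly to~\(1\) in~\(\Mult(C)\), and two successive strict limits force \(\rho(\u_h)-1=0\).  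Hence~\(\rho\) satisfies condition~\((5)\) and is a representation of \((\A,\u)\).

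Combining the two directions yields a bijection between morphisms \(\Cst(\A,\u)\to\Mult(C)\) and representations of \((\A,\u)\) on~\(C\), for every~\(C\).  Since the image of~\(\A\) under~\(\rho^\un\) generates \(\Cst(\A)\), its image generates \(\Cst(\A,\u)\), so a morphism out of \(\Cst(\A,\u)\) is determined by its composite with the canonical representation; this gives the uniqueness of the factorisation.  Therefore the canonical representation \(\pi^\un\) of \((\A,\u)\) obtained by composing~\(\rho^\un\) with the quotient map \(\Cst(\A)\onto\Cst(\A,\u)\) is universal in the sense of Definition~\ref{def:crossed_product}, so \(\Cst(\A,\u)\) is a crossed product \((\A,\u)\rtimes\cm\).
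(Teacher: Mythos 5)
Your proof is correct and takes essentially the same route as the paper's: both reduce the statement to the Doran--Fell bijection between nondegenerate morphisms out of \(\Cst(\A)\) and representations of the Fell bundle~\(\A\), and then identify vanishing on~\(\I_\u\) with the extra condition \(\rho(\u_h)=1\). Your approximate-unit/strict-limit argument for the converse direction just makes explicit what the paper compresses into its displayed chain of equivalences on \(\contc(\A)\).
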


\begin{proof}
  Morphisms \(\Cst(\A)\to B\) are in bijection with
  representations of the Fell bundle~\(\A\) on~\(B\), mapping a
  representation~\(\rho\) of~\(\A\) to its integrated
  form~\({\smallint}\rho\) (see \cite{Doran-Fell:Representations_2}*{VIII.11}).
  Representations of \(\Cst(\A,\u)=\Cst(\A)/\I_\u\) correspond
  bijectively to representations of~\(\Cst(\A)\) that vanish
  on~\(\I_\u\).  Moreover, for a representation~\(\pi\) of~\(\A\),
  \begin{align*}
    {\smallint}\rho(\I_\u)=0
    &\iff
    {\smallint}\rho(\rho^\un(\u_h)\xi-\xi)=0
    \text{ for all }h\in H,\ \xi\in \contc(\A)\\
    &\iff
    \rho(\u_h\cdot a)=\rho(a)
    \text{ for all }a\in \A.
  \end{align*}
  Thus the bijection from representations of~\(\A\) on~\(B\) to
  morphisms \(\Cst(\A)\to B\) restricts to a bijection from
  representations of \((\A,\u)\) to morphisms \(\Cst(\A,\u)\to B\).
\end{proof}

\section{Equivalence of crossed modules}
\label{sec:equivalence_crossed_modules}

One should expect equivalent crossed modules to have equivalent
\(2\)\nb-categories of actions on \(\Cst\)\nb-algebras by correspondences.
What does this mean?  A functor \(\Corr(\cm)\to\Corr(\cm')\) for two
crossed modules \(\cm\) and~\(\cm'\) consists of
\begin{itemize}
\item a map~\(F\) on objects, which maps each \(\cm\)\nb-action
  \(x\) to a \(\cm'\)\nb-action \(F(x)\);
\item for any two \(\cm\)\nb-actions \(x_1\) and~\(x_2\), a functor
  \[
  F\colon \Corr_\cm(x_1,x_2) \to
  \Corr_{\cm'}(F(x_1),F(x_2));
  \]
\item for any three \(\cm\)\nb-actions \(x_1\), \(x_2\), \(x_3\),
  natural isomorphisms \(F(f)\circ F(g)\cong F(f\circ g)\) in
  \(\Corr_{\cm'}(F(x_1),F(x_3))\) for \(f\in\Corr_\cm(x_2,x_3)\),
  \(g\in\Corr_\cm(x_1,x_2)\);
\item natural isomorphisms \(F(1_x)\cong 1_{F(x)}\);
\end{itemize}
the natural transformations in the last two conditions must satisfy
suitable coherence axioms (see~\cite{Leinster:Basic_Bicategories}).
Such a functor is an equivalence if the map~\(F\) on objects is
essentially surjective and the functors on the arrow groupoids are
equivalences.

An equivalence of \(2\)\nb-categories has a quasi-inverse that is again a
functor, such that the compositions in either order are
equivalent to the identity functor in a suitable sense.

We will only consider functors constructed from homomorphisms of
crossed modules.  Let \(\cm_i=(G_i,H_i,\tcm_i,\acm_i)\) for \(i=1,2\)
be crossed modules.  A \emph{homomorphism of crossed modules}
\(\cm_1\to\cm_2\) is a pair of continuous group homomorphisms
\(\varphi\colon G_1\to G_2\), \(\psi\colon H_1\to H_2\) such that
\(\tcm_2\circ\psi=\varphi\circ\tcm_1\) and \(\acm_{2,\varphi(g)}(\psi(h)) =
\psi(\acm_{1,g}(h))\).  Such a homomorphism induces a functor
\begin{equation}\label{eq:Functor_Induced_Morphism}
(\varphi,\psi)^*\colon \Corr(\cm_2)\to\Corr(\cm_1),
\end{equation}
by sending a \(\cm_2\)\nb-action \((A,\alpha,\omega,\eta)\) to
\((A,\varphi^*\alpha,(\varphi\times\varphi)^*\omega,\psi^*\eta)\),
a transformation \((\E,\chi)\) to \((\E,\varphi^*\chi)\), and a
modification~\(W\) again to~\(W\); the natural transformations in
the definition of a functor above are trivial and therefore
coherent.

When we translate to Fell bundles using
Theorem~\ref{theo:ActionCorrespondences=FellBundles}, the
functor~\eqref{eq:Functor_Induced_Morphism} sends a Fell
bundle~\((\A,\u)\) over~\(\cm_1\) to the Fell bundle
\((\varphi^*\A,\psi^*\u)\) over~\(\cm_2\), where~\(\varphi^*\A\)
denotes the pull-back of~\(\A\) along~\(\varphi\) and
\(\psi^*\u=\u\circ\psi\).

\begin{definition}
  \label{def:arrow_groupoid}
  The \emph{arrow groupoid} of \(\cm=(G,H,\tcm,\acm)\) is the
  transformation groupoid \(G\rtimes H\) for the right action of the
  topological group~\(H\) on the topological space~\(G\) defined by
  \(g\cdot h\defeq g\tcm(h)\).

  A homomorphism \((\varphi,\psi)\) of crossed modules induces a
  functor between the arrow groupoids.  We call \((\varphi,\psi)\)
  an \emph{equivalence} if the induced functor on arrow groupoids
  gives an equivalence of topological groupoids.
\end{definition}

To make this precise, we turn a functor between the arrow groupoids
into a Hilsum--Skandalis morphism, that is, a topological space
endowed with commuting actions of \(G_1\rtimes H_1\) and
\(G_2\rtimes H_2\) (see~\cite{Hilsum-Skandalis:Morphismes}).  Let
\(X\defeq G_1\times H_2\) and define anchor maps \(\pi_i\colon X\to
G_i\) for \(i=1,2\) by \(\pi_1(g_1,h_2)\defeq g_1\) and
\(\pi_2(g_1,h_2)\defeq \varphi(g_1)\tcm_2(h_2)\).  Define a left
\(H_1\)\nb-action and a right \(H_2\)\nb-action on~\(X\) by
\begin{alignat*}{2}
  h\cdot (g_1,h_2) &\defeq (g_1\tcm_1(h)^{-1},\psi(h)h_2)
  &\qquad &\text{for }h\in H_1,\\
  (g_1,h_2)\cdot h &\defeq (g_1,h_2h)
  &\qquad &\text{for }h\in H_2.
\end{alignat*}
These definitions turn~\(X\) into a \(G_1\rtimes H_1\)-\(G_2\rtimes
H_2\)-bispace.  This bispace is always a Hilsum--Skandalis morphism,
that is, the action of \(G_2\rtimes H_2\) is free and proper
and~\(\pi_1\) induces a homeomorphism from the \(G_2\rtimes
H_2\)-orbit space to~\(G_1\).  The homomorphism \((\varphi,\psi)\) is
an equivalence if the associated bispace is a Morita equivalence as
in~\cite{Muhly-Renault-Williams:Equivalence}.  In our case, this
happens if and only if the action of \(G_1\rtimes H_1\) on~\(X\) is
free and proper with orbit space projection~\(\pi_2\).  Equivalently,
\(X\) is a Morita equivalence between \(G_1\rtimes H_1\) and
\(G_2\rtimes H_2\) as in~\cite{Muhly-Renault-Williams:Equivalence}.
It is enough to check that~\(H_1\) acts freely and properly on~\(X\)
with \(X/H_1\cong G_2\) via~\(\pi_2\).

\begin{lemma}
  \label{lem:cm_equivalence_criterion}
  The homomorphism \((\varphi,\psi)\) is an equivalence if and only if
  \begin{enumerate}
  \item \label{en:cm_equivalence_1} the map
    \[
    (\tcm_1,\psi)\colon
    H_1\to \{(g_1,h_2)\in G_1\times H_2 : \varphi(g_1)=\tcm_2(h_2) \}
    = G_1\times_{G_2} H_2
    \]
    is a homeomorphism, where the codomain carries the subspace
    topology;
  \item \label{en:cm_equivalence_2} the map \(\pi_2\colon G_1\times
    H_2\to G_2\), \((g_1,h_2)\mapsto \varphi(g_1)\cdot
    \tcm_2(h_2)\), is an open surjection.
  \end{enumerate}
\end{lemma}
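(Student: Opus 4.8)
The plan is to unpack the abstract Morita-equivalence criterion stated just before the lemma, namely that $(\varphi,\psi)$ is an equivalence if and only if $H_1$ acts freely and properly on $X = G_1 \times H_2$ with orbit space projection $\pi_2\colon X \to G_2$ realizing $X/H_1 \cong G_2$. I would show that this condition decomposes exactly into the two stated conditions \ref{en:cm_equivalence_1} and~\ref{en:cm_equivalence_2}, treating freeness-and-properness of the $H_1$-action as equivalent to~\ref{en:cm_equivalence_1}, and the orbit-space identification $X/H_1 \cong G_2$ via $\pi_2$ as equivalent to~\ref{en:cm_equivalence_2}.

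First I would analyze the $H_1$-action $h\cdot(g_1,h_2) = (g_1\tcm_1(h)^{-1}, \psi(h)h_2)$. The key observation is that $\pi_1(h\cdot(g_1,h_2)) = g_1\tcm_1(h)^{-1}$, so the $H_1$-action does not preserve the first coordinate but twists it by $\tcm_1$; however, a short computation shows $\pi_2$ is $H_1$-invariant, since $\pi_2(h\cdot(g_1,h_2)) = \varphi(g_1\tcm_1(h)^{-1})\tcm_2(\psi(h)h_2) = \varphi(g_1)\varphi(\tcm_1(h))^{-1}\tcm_2(\psi(h))\tcm_2(h_2)$, and the homomorphism condition $\tcm_2\circ\psi = \varphi\circ\tcm_1$ makes the middle two factors cancel. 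Thus $\pi_2$ factors through $X/H_1$, and the content of the orbit-space identification is precisely that the induced map $X/H_1 \to G_2$ is a homeomorphism. The standard fact is that for a free and proper action, the orbit projection is open, and the induced map on the quotient is a homeomorphism onto its image exactly when $\pi_2$ is an open map with the correct fibres; combined with surjectivity this gives~\ref{en:cm_equivalence_2}. I expect this direction to be mostly a matter of carefully matching the quotient topology on $X/H_1$ against the topology on $G_2$.

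The heart of the argument is identifying freeness and properness of the $H_1$-action with the homeomorphism condition~\ref{en:cm_equivalence_1}. Freeness means $g_1\tcm_1(h)^{-1} = g_1$ and $\psi(h)h_2 = h_2$ force $h = 1$; the first equation gives $\tcm_1(h) = 1$ and the second gives $\psi(h) = 1$, so freeness is equivalent to injectivity of $(\tcm_1,\psi)\colon H_1 \to G_1 \times H_2$. For properness, the relevant map is $H_1 \times X \to X \times X$, $(h, x)\mapsto (h\cdot x, x)$; properness says this is a proper map, and the stabilizer/orbit structure localizes the question to the map sending $h$ to the pair $(\tcm_1(h)^{-1}, \psi(h))$ of "displacement" data. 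The fibre product $G_1\times_{G_2} H_2 = \{(g_1,h_2): \varphi(g_1) = \tcm_2(h_2)\}$ appears because two points $(g_1,h_2)$ and $(g_1',h_2')$ lie in the same $H_1$-orbit precisely when there is $h\in H_1$ with $g_1' = g_1\tcm_1(h)^{-1}$ and $h_2' = \psi(h)h_2$, and the compatibility forcing this to be consistent with equal $\pi_2$-values is exactly the fibre-product constraint $\varphi(g_1 g_1'^{-1}) = \tcm_2(h_2' h_2^{-1})^{-1}$ lying in the image of $(\tcm_1,\psi)$. I would make this precise by showing the orbit equivalence relation on $X$ is the image of $H_1 \times X$ under the action map, and that this image, modulo the free action, is governed by the range of $(\tcm_1,\psi)$ inside $G_1\times_{G_2} H_2$; properness then amounts to this parametrizing map being a homeomorphism onto a closed subspace, which is condition~\ref{en:cm_equivalence_1}.

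The main obstacle I anticipate is the properness half of the equivalence between freeness-and-properness and~\ref{en:cm_equivalence_1}, since properness is a global topological condition that does not reduce to a pointwise computation the way freeness does. The subtlety is that $(\tcm_1,\psi)$ being merely a continuous bijection onto $G_1 \times_{G_2} H_2$ is not enough — one genuinely needs it to be a \emph{homeomorphism} onto the fibre product with its subspace topology, and this is where properness of the $H_1$-action enters. I would handle this by invoking the standard characterization that a continuous action of a locally compact group is proper if and only if the map $H_1 \times X \to X \times X$ is proper, and then reducing to the statement that $H_1 \to G_1 \times_{G_2} H_2$, $h\mapsto (\tcm_1(h),\psi(h))$, is a homeomorphism; the direction from properness to the homeomorphism property uses that proper maps from locally compact spaces are closed, while the converse uses that a homeomorphism onto the closed fibre product lets one pull back properness from the ambient $G_1 \times H_2$. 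Care is needed to ensure the subspace topology on $G_1\times_{G_2} H_2$ is the relevant one, and that the image is genuinely closed, which is where local compactness of the groups is used.
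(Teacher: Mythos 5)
There is a genuine gap in your top-level decomposition. You claim that freeness-and-properness of the \(H_1\)\nb-action matches condition~\ref{en:cm_equivalence_1} and that the orbit-space identification \(X/H_1\cong G_2\) matches condition~\ref{en:cm_equivalence_2}; this matching is not correct, and the slippage is visible in your own phrasing when you say that properness ``amounts to this parametrizing map being a homeomorphism onto a \emph{closed subspace}, which is condition~\ref{en:cm_equivalence_1}.'' Condition~\ref{en:cm_equivalence_1} demands that \((\tcm_1,\psi)\) be a homeomorphism \emph{onto the whole fibre product} \(G_1\times_{G_2}H_2\), and surjectivity onto the fibre product is simply not a consequence of freeness and properness: freeness gives injectivity of \((\tcm_1,\psi)\), and properness gives that it is a proper map, hence a homeomorphism onto a closed subset of the fibre product --- nothing forces that subset to be everything. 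Dually, condition~\ref{en:cm_equivalence_2} alone does not make \(\pi_2\) the orbit space projection, because it says nothing about injectivity of the induced map \(\pi_2'\colon X/H_1\to G_2\). A concrete counterexample to your claimed matching: let \(\cm_1\) be the trivial crossed module and \(\cm_2=(G_2,H_2,\tcm_2,\acm_2)\) with \(G_2\) trivial, \(H_2=\Z\), and \(\tcm_2=0\). Then \(H_1=\{1\}\) acts freely and properly on \(X=\{1\}\times\Z\), and \(\pi_2\colon X\to G_2=\{1\}\) is an open surjection, so both of your two sides hold; yet \((\tcm_1,\psi)\colon \{1\}\to \{1\}\times\Z\) is far from surjective onto the fibre product, condition~\ref{en:cm_equivalence_1} fails, and indeed \((\varphi,\psi)\) is not an equivalence since \(\pi_2(\cm)=\ker\tcm\) is an invariant (\(0\) versus \(\Z\)). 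Under your decomposition you would wrongly conclude equivalence here.

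The correct bookkeeping, which is what the paper does, splits condition~\ref{en:cm_equivalence_1} into three separate pieces rather than identifying it with freeness-plus-properness: injectivity of \((\tcm_1,\psi)\) is exactly freeness of the \(H_1\)\nb-action; \emph{surjectivity} of \((\tcm_1,\psi)\) onto \(G_1\times_{G_2}H_2\) is exactly injectivity of the induced map \(\pi_2'\colon X/H_1\to G_2\) (because \((g_1,h_2)\) and \((g_1',h_2')\) have the same \(\pi_2\)\nb-image if and only if \(((g_1')^{-1}g_1,h_2'h_2^{-1})\) lies in the fibre product, and lying in the image of \((\tcm_1,\psi)\) is exactly lying in one \(H_1\)\nb-orbit); and properness of the action is equivalent to properness of the map \((\tcm_1,\psi)\), which for an already-established continuous bijection is equivalent to its being a homeomorphism. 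Condition~\ref{en:cm_equivalence_2} then supplies surjectivity and openness of \(\pi_2'\), which together with the injectivity just obtained makes \(\pi_2'\) a homeomorphism. So the orbit-space identification draws on part of condition~\ref{en:cm_equivalence_1}, and conversely the homeomorphism statement in~\ref{en:cm_equivalence_1} draws on freeness, surjectivity, and properness simultaneously; the two lemma conditions do not decouple along the lines you propose. Your remaining ingredients (the \(H_1\)\nb-invariance computation for \(\pi_2\), the characterisation of properness via the displacement data, closedness of the fibre product in \(G_1\times H_2\), and the fact that a proper continuous bijection of locally compact Hausdorff spaces is a homeomorphism) are all sound and are essentially the ones the paper uses; only the global architecture needs to be repaired as above.
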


The first condition says that~\(H_1\) is the pull-back of~\(H_2\)
along~\(\varphi\), the second condition is a transversality
condition for this pull-back.

\begin{proof}
  The freeness of the \(H_1\)\nb-action on~\(X\) means that no
  \(h\neq1\) in~\(H_1\) has \(\psi(h)=1\) and \(\tcm_1(h)=1\), that
  is, the map in~\ref{en:cm_equivalence_1} is injective.

  Let \((g_1,h_2)\in G_1 \times H_2\) and \((g_1',h_2')\in G_1
  \times H_2\).  They have the same \(\pi_2\)\nb-image if and only
  if \(\varphi(g_1)\tcm_2(h_2) = \varphi(g_1')\tcm_2(h_2')\), if and
  only if \(((g_1')^{-1}g_1,h_2'h_2^{-1})\in G_1\times H_2\)
  satisfies \(\varphi((g_1')^{-1}g_1) = \tcm_2(h_2'h_2^{-1})\);
  hence the surjectivity of the map in~\ref{en:cm_equivalence_1}
  means that the map \(\pi_2'\colon X/H_1\to G_2\) induced
  by~\(\pi_2\) is injective.  The map~\(\pi_2'\) is automatically
  continuous, and it is open if and only if~\(\pi_2\) is because the
  projection \(X\to X/H_1\) is open and continuous.  Hence
  condition~\ref{en:cm_equivalence_2} means that~\(\pi_2'\) is
  surjective and open.  Being injective and continuous as well, it
  is a homeomorphism.

  Finally, the properness of the \(H_1\)\nb-action is equivalent to
  the following: for all compact subsets \(K\subseteq G_1\),
  \(L\subseteq H_2\), the set of \(h\in H_1\) with \(\tcm_1(h)\in
  K^{-1}\cdot K\) and \(\psi(h)\in L\cdot L^{-1}\) is compact.
  This is equivalent to the properness of the map
  in~\ref{en:cm_equivalence_1}.  We already know that this map is a
  continuous bijection.  Such a map is proper if and only if it is a
  homeomorphism.
\end{proof}

For discrete crossed modules,
\cite{Noohi:two-groupoids}*{Proposition 6.3} says that the
equivalences in the above sense are the acyclic cofibrations in a
suitable model structure.

The arrow groupoid does not yet encode the multiplication in~\(G\)
and the conjugation action~\(\acm\).  These are encoded in a
continuous functor
\begin{align}
  \label{eq:mult_arrow_groupoid}
  M\colon (G\rtimes H) \times (G\rtimes H)\to (G\rtimes H),
  \qquad
  (g_1,g_2)&\mapsto g_1\cdot g_2,\\
  (g_1,h_1),(g_2,h_2)&\mapsto
  (g_1\cdot g_2,\acm_{g_2}^{-1}(h_1)\cdot h_2); \notag
\end{align}
here \((g,h)\) denotes the \(2\)\nb-arrow \(h\colon g\Rightarrow
g\tcm(h)\).  The existence and associativity of this functor is
equivalent to the axioms of a crossed module.  The orbit space of
the arrow groupoid is \(\pi_1(\cm)\defeq \coker\tcm\); the group
structure on the orbit space is induced by~\(M\); the isotropy group
of any \(g\in G\) is isomorphic to \(\pi_2(\cm)\defeq \ker\tcm\).
This is an Abelian group, and the action~\(\acm\) induces a
\(\pi_1(\cm)\)-module structure on it; this module structure may
also be expressed through~\(M\).  Since we may express them through
canonical extra structure on the arrow groupoid, the group
\(\pi_1(\cm)\) and the \(\pi_1(\cm)\)-module \(\pi_2(\cm)\) are
invariant under equivalences; for crossed modules of locally compact
groups, this invariance includes the induced topologies on them.

Discrete crossed modules are classified up to equivalence by the
group \(\pi_1(\cm)\), the \(\pi_1(\cm)\)-module \(\pi_2(\cm)\), and
a cohomology class in \(\textup{H}^3(\pi_1(\cm),\pi_2(\cm))\) (see
\cite{MacLane-Whitehead:3-type}).  We shall not attempt such a
complete classification of crossed modules of locally compact groups
here.  It is useful, however, to know that \(\pi_1\) and~\(\pi_2\)
are invariant under equivalence.

\begin{theorem}
  \label{the:equivalence_for_actions}
  The functor \(\Corr(\cm_2)\to\Corr(\cm)\) induced by an
  equivalence of crossed modules \((\varphi,\psi)\colon
  \cm\to\cm_2\) is an equivalence of \(2\)\nb-categories.

  This equivalence intertwines the crossed product functors on both
  categories.
\end{theorem}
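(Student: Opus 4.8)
The plan is to construct the functor $F=(\varphi,\psi)^*\colon \Corr(\cm_2)\to\Corr(\cm)$ from the given homomorphism of crossed modules, and then to produce a quasi-inverse using the Morita equivalence of arrow groupoids supplied by the equivalence $(\varphi,\psi)$. The cleanest route, given the machinery already developed, is to work entirely on the Fell-bundle side via Theorem~\ref{theo:ActionCorrespondences=FellBundles}: an equivalence of $2$\nb-categories of actions is the same as an equivalence of the corresponding $2$\nb-categories of saturated Fell bundles. Thus I would first reinterpret $F$ as the pull-back construction $(\A,\u)\mapsto(\varphi^*\A,\psi^*\u)$ already described after~\eqref{eq:Functor_Induced_Morphism}, and then show this is an equivalence.

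First I would use Lemma~\ref{lem:cm_equivalence_criterion}: the hypothesis that $(\varphi,\psi)$ is an equivalence means $H$ is the pull-back $G\times_{G_2}H_2$ along $\varphi$ via $(\tcm,\psi)$, and that $\pi_2\colon G\times H_2\to G_2$ is an open surjection. The first condition says a saturated Fell bundle over $\cm$ is exactly a saturated Fell bundle over $G$ together with the unitary multipliers $\u_h$ indexed by $H$, but since $H\cong G\times_{G_2}H_2$, the data of $\u$ is equivalent to specifying unitaries indexed by those pairs $(g,h_2)$ with $\varphi(g)=\tcm_2(h_2)$ that are compatible with the bundle multiplication. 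The second condition lets me transport a Fell bundle over $G$ to one over $G_2$: since $\pi_2$ is an open surjection with the $H_1=H$-orbit space homeomorphic to $G_2$, I can push a $\cm$\nb-Fell bundle forward along $\varphi$ to a $\cm_2$\nb-Fell bundle, using the $\u_h$ to descend fibres over each $H$-orbit to a single fibre over the corresponding point of $G_2$. This descent is precisely where saturation is used, since it guarantees the bimodule identifications $\A_1\cdot\A_g=\A_g$ needed to glue.

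Concretely, I would define the quasi-inverse $F'\colon\Corr(\cm)\to\Corr(\cm_2)$ on Fell bundles by a coinduction/descent: the fibre of $F'(\A)$ over $g_2\in G_2$ is the space of $H$\nb-equivariant sections of $\A$ over the $\pi_2$\nb-fibre $\pi_2^{-1}(g_2)$, where equivariance is imposed through condition~\ref{def:FellBundle2Group2} and the multipliers $\u_h$. One checks that $F'(\A)$ is again a saturated Fell bundle over $\cm_2$, that $F'$ extends to correspondences and their isomorphisms (using Theorem~\ref{the:Fell_bundle_correspondences} to phrase arrows and $2$\nb-arrows), and that $F\circ F'$ and $F'\circ F$ are naturally isomorphic to the respective identity $2$\nb-functors. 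The natural isomorphisms $F\circ F'\cong\Id$ and $F'\circ F\cong\Id$ come from the two projections of the bispace $X=G\times H_2$ and amount to the statement that pulling back and then descending (or vice versa) recovers the original bundle up to the canonical Morita identifications encoded by $X$; essential surjectivity of $F$ on objects is the assertion that every $\cm$\nb-Fell bundle is of the form $\varphi^*$ of a $\cm_2$\nb-Fell bundle, which is exactly what the descent $F'$ provides.

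For the second assertion I would argue that these constructions preserve cross-sectional $\Cst$\nb-algebras. By Proposition~\ref{pro:crossed_product}, the crossed product of a $\cm$\nb-action is $\Cst(\A,\u)=\Cst(\A)/\I_\u$, and similarly on the $\cm_2$ side. Since $F$ and its quasi-inverse are built from the groupoid Morita equivalence $X$, they induce a Morita--Rieffel equivalence (indeed, under the representation-preserving identifications, an isomorphism) of the cross-sectional $\Cst$\nb-algebras that carries $\I_\u$ to $\I_{\psi^*\u}$; passing to quotients gives the intertwining of crossed products. The main obstacle I expect is the descent step in constructing $F'$ on \emph{correspondences} rather than just on bundles: one must verify that the equivariant-section construction is functorial and compatible with inner products and left/right multiplications, and that properness of the $H$\nb-action (Lemma~\ref{lem:cm_equivalence_criterion}) is genuinely needed to ensure the coinduced sections form a full Hilbert module with the correct fibres. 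Continuity and the openness of $\pi_2$ are the delicate points; everything algebraic is routine once the topological bookkeeping of the Morita bispace $X$ is in place.
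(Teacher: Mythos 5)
Your construction of the equivalence itself is essentially the paper's own proof in Fell-bundle clothing: the paper also descends along the Morita bispace \(X=G\times H_2\), defining \(\alpha_2\) as the space of \(H\)\nb-invariant bounded continuous sections of \(\pi_1^*\alpha\) whose norm function lies in \(\Cont_0(X/H)\) -- fibrewise this is exactly your ``\(H\)\nb-equivariant sections over \(\pi_2^{-1}(g_2)\)'' -- and it descends \(\omega\) over \(X^2\) and the isomorphisms \(\chi\) of a transformation in the same way. (The paper verifies essential surjectivity of \((\varphi,\psi)^*\) and that transformations lift uniquely up to isomorphism, rather than packaging a quasi-inverse \(F'\) with explicit natural isomorphisms; that difference is cosmetic. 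One small slip: \(\A_1\cdot\A_g=\A_g\) is nondegeneracy, valid for every Fell bundle, not a consequence of saturation.)

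The genuine gap is in your argument for the second assertion. There is no isomorphism, nor even a Morita--Rieffel equivalence, between the full cross-sectional algebras \(\Cst(\A)\) and \(\Cst(\varphi^*\A)\) ``carrying \(\I_\u\) to \(\I_{\psi^*\u}\)'': only the quotients \(\Cst(\A,\u)\) agree. Concretely, take \(\cm=(G,G,\Id,\acm)\) with \(\acm\) conjugation, which is equivalent to the trivial crossed module as in Example~\ref{exa:grouplike_classical}, and the trivial action on \(\C\): the Fell bundle on the \(\cm_2\)\nb-side has cross-sectional algebra \(\C\), while its pull-back to \(\cm\) has cross-sectional algebra \(\Cst(G)\), and \(\Cst(\Z)\cong\Cont(\Torus)\) is not Morita equivalent to \(\C\). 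The groupoid Morita equivalence \(X\) relates \(\Cont_0(G)\rtimes H\) to \(\Cont_0(G_2)\rtimes H_2\), not the cross-sectional algebras of the Fell bundles over \(G\) and \(G_2\), so ``built from \(X\)'' gives you nothing at the level of \(\Cst(\A)\) before quotienting. The paper's actual argument sidesteps this entirely: \((\varphi,\psi)^*\) sends trivial actions to trivial actions and sends morphisms (transformations with underlying Hilbert module \(B\)) to morphisms, hence induces a bijection between representations of the \(\cm_2\)\nb-action and of the pulled-back \(\cm\)\nb-action on any \(B\); the universal property of the crossed product (Definition~\ref{def:crossed_product}, realised by Proposition~\ref{pro:crossed_product}) then yields a unique isomorphism \(A\rtimes\cm\cong A\rtimes\cm_2\) matching the universal representations. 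You would need this representation-theoretic step, or a direct comparison of the two quotients, to repair your proof.
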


\begin{proof}
  We must show that any \(\cm\)\nb-action
  \((A,\alpha,\omega,\eta)\) is isomorphic to
  \[
  (\varphi,\psi)^*(A_2,\alpha_2,\omega_2,\eta_2)
  \]
  for an action of~\(\cm_2\) and that any transformation
  \((\E,\chi)\) between \(\cm\)\nb-actions is isomorphic to
  \((\varphi,\psi)^*(\E_2,\chi_2)\) for a transformation of
  \(\cm_2\)\nb-actions \((\E_2,\chi_2)\) that is unique up to
  isomorphism.  Since \((\varphi,\psi)^*\) does not change the
  underlying \(\Cst\)\nb-algebras, we may put \(A_2\defeq A\).

  Define isomorphisms of correspondences \(\eta_{g,h}\colon
  \alpha_g\to\alpha_{g\tcm(h)}\) for \(g\in G\), \(h\in H\) by
  \[
  \eta_{g,h}\colon
  \alpha_g \cong A\otimes_A \alpha_g
  \xrightarrow{\eta_h\otimes_A\Id_{\alpha_g}}
  \alpha_{\tcm(h)}\otimes_A\alpha_g
  \xrightarrow{\omega(g,\tcm(h))}
  \alpha_{g\tcm(h)}.
  \]
  The space \(G\times H\) is the space of \(2\)\nb-arrows
  in~\(\cm\), and an action in the sense
  of~\cite{Buss-Meyer-Zhu:Higher_twisted}*{Definition 4.1}
  provides~\(\eta_{g,h}\) as above.  We removed redundancy in
  Definition~\ref{def:act_corr} and kept only \(\eta_{1,h}=\eta_h\).
  Now we need the whole family~\(\eta_{g,h}\).  It depends
  continuously on \((g,h)\in G\times H\).  The naturality of the
  isomorphisms~\(\omega_{g_1,g_2}\) with respect to \(2\)\nb-arrows
  says that the diagrams
  \begin{equation}
    \label{eq:large_coherence_eta_omega}
    \begin{tikzpicture}[yscale=1.2,xscale=5,
      baseline=(current bounding box.west)]
      \node (m-0-1) at (0,1) {\(\alpha_{g_2}\otimes_A \alpha_{g_1}\)};
      \node (m-0-2) at (1,1) {\(\alpha_{g_1g_2}\)};
      \node (m-1-1) at (0,0)
      {\(\alpha_{g_2\tcm(h_2)}\otimes_A \alpha_{g_1\tcm(h_1)}\)};
      \node (m-1-2) at (1,0) {\(\alpha_{g_1\tcm(h_1)\cdot g_2\tcm(h_2)}\)};
      \draw[cdar] (m-0-1) -- node {\(\omega_{g_1,g_2}\)} (m-0-2);
      \draw[cdar] (m-1-1) -- node[swap]
      {\(\omega_{g_1\tcm(h_1),g_2\tcm(h_2)}\)} (m-1-2);
      \draw[cdar] (m-0-1) -- node[swap]
      {\(\eta_{g_2,h_2}\otimes_A\eta_{g_1,h_1}\)} (m-1-1);
      \draw[cdar] (m-0-2) -- node
      {\(\eta_{g_1g_2,\acm_{g_2}^{-1}(h_1)h_2}\)} (m-1-2);
    \end{tikzpicture}
  \end{equation}
  commute for all \(g_1,g_2\in G\), \(h_1,h_2\in H\).

  The isomorphisms \(\eta_{g,h}\colon \alpha_g \to
  \alpha_{g\tcm(h)}\) for \(g\in G\), \(h\in H\) turn~\(\alpha\)
  into a \(G\rtimes H\)-equivariant Hilbert bimodule over
  \(\Cont_0(G,A)\).  Since \(G\rtimes H\) is Morita equivalent to
  \(G_2\rtimes H_2\) via the bispace~\(X\), we may transport this to
  an \(G_2\rtimes H_2\)-equivariant Hilbert bimodule~\(\alpha_2\)
  over \(\Cont_0(G_2,A)\).  We describe~\(\alpha_2\) more
  explicitly.

  The pull-back \(\pi_1^*\alpha \cong \alpha\otimes\Cont_0(H_2)\)
  along \(\pi_1\colon X\to G\) is an \(H\times H_2\)\nb-equivariant
  Hilbert module over \(\Cont_0(X,A)\).  Let \(\alpha_2\subseteq
  \Mult(\pi_1^*\alpha)\) be the space of all bounded continuous
  sections of~\(\pi_1^*\alpha\) that are \(H\)\nb-invariant and
  whose norm function is in \(\Cont_0(X/H)\).  Since \(X/H\cong
  G_2\) via~\(\pi_2\) and the actions of \(H_2\) and~\(H\) commute,
  we may view this as an \(H_2\)\nb-equivariant Hilbert
  \(\Cont_0(G_2,A)\)-module.

  The left \(A\)\nb-action survives these constructions because the
  action of~\(H\) commutes with it.  The fibre of~\(\alpha_2\) at
  \(g_2\in G_2\) is isomorphic to~\(\alpha_g\) for any \(g\in G\)
  for which there is \(h_2\in H_2\) with \(\pi_2(g,h_2)=g_2\), that
  is, \(g_2 = \varphi(g)\tcm_2(h_2)\).  These isomorphisms on the
  fibres are continuous and give a canonical \(H\)\nb-equivariant
  isomorphism \(\varphi^*\alpha_2\cong\alpha\).

  The pull-backs \(\pi_1^*\alpha\), \(\pi_2^*\alpha\)
  and~\(\mu^*\alpha\) in the definition of~\(\omega\) are \(H\times
  H\)\nb-equivariant Hilbert modules over \(\Cont_0(G\times G,A)\).
  That is, they are representations of the groupoid \((G\rtimes H)^2
  = (G\times G)\rtimes (H\times H)\).  This groupoid is equivalent
  to \((G_2\rtimes H_2)^2\) via the bispace~\(X^2\).  Now pull back
  \(\pi_1^*\alpha\), \(\pi_2^*\alpha\), \(\mu^*\alpha\) and the
  isomorphism \(\omega\colon \pi_2^*\alpha\otimes_{\Cont_0(G\times
    G,A)} \pi_1^*\alpha \to \mu^*\alpha\) to~\(X^2\) and push all
  this down to the category of \(H_2^2\)\nb-equivariant Hilbert
  \(\Cont_0(G_2^2,A)\)-modules by taking \(H^2\)\nb-invariants.  The
  resulting Hilbert \(\Cont_0(G_2^2,A)\)-modules are canonically
  isomorphic to \(\pi_1^*\alpha_2\), \(\pi_2^*\alpha_2\),
  and~\(\mu^*\alpha_2\), respectively.  Hence~\(\omega\) induces a
  \(\Cont_0(G_2\times G_2)\)-linear \(H_2\times H_2\)\nb-equivariant
  unitary operator
  \[
  \omega_2\colon
  \pi_2^*\alpha_2\otimes_{\Cont_0(G_2\times G_2,A)} \pi_1^*\alpha_2
  \to \mu^*\alpha_2.
  \]
  The \(H_2\times H_2\)\nb-equivariance shows that~\(\omega_2\)
  satisfies the analogue of~\eqref{eq:large_coherence_eta_omega},
  which gives conditions \ref{def:act_corr2} and~\ref{def:act_corr3}
  in Definition~\ref{def:act_corr}.  To prove that~\(\omega_2\)
  inherits condition~\ref{def:act_corr1} in
  Definition~\ref{def:act_corr}, we use that for each
  \(g_1,g_2,g_3\in G_2\) there are \(g_1',g_2',g_3'\in G\) such that
  \(\omega_{2,g_1,g_2}=\omega_{g_1',g_2'}\),
  \(\omega_{2,g_1g_2,g_3}=\omega_{g_1'g_2',g_3'}\),
  \(\omega_{2,g_2,g_3}=\omega_{g_2',g_3'}\),
  \(\omega_{2,g_1,g_2g_3}=\omega_{g_1',g_2'g_3'}\) with suitable
  natural identifications of the fibres
  \(\alpha_{2,g_i}\cong\alpha_{g_i'}\) and
  \(\alpha_{2,g_ig_j}\cong\alpha_{g_i'g_j'}\).  We also find a
  canonical isomorphism \((\varphi\times\varphi)^*\omega_2\cong
  \omega\).  Hence \((\varphi,\psi)^*(A,\alpha_2,\omega_2,\eta_2) =
  (A,\alpha,\omega,\eta)\).

  In the same way, we may also lift a transformation
  \((\Hilm,\chi)\) of \(\cm\)\nb-actions to one between the
  corresponding \(\cm_2\)\nb-actions.  We may put \(\Hilm_2=\Hilm\)
  because our lifting does not change the underlying
  \(\Cst\)\nb-algebras.  The assumptions in
  Definition~\ref{def:transformation} imply that the
  isomorphism~\(\chi\) is \(H\)\nb-equivariant.  Hence we may pull
  it back to an \(H\times H_2\)\nb-equivariant isomorphism
  over~\(X\) and then push down to~\(G_2\) to get an
  \(H_2\)\nb-equivariant isomorphism \(\chi_2\colon \Hilm\otimes_B
  \beta_2\to \alpha_2\otimes_A \Hilm\) over~\(G_2\).  The same arguments
  as above show that \((\Hilm,\chi_2)\) is a transformation that is
  a \((\varphi,\psi)^*\)-preimage of the transformation
  \((\Hilm,\chi)\), and the only one with this property up to
  isomorphism of transformations.

  It is clear that \((\varphi,\psi)^*\) maps a trivial action
  of~\(\cm_2\) to a trivial action of~\(\cm\).  Furthermore, on the
  level of transformations, \(\Hilm\) is a morphism (its underlying
  Hilbert module is~\(B\) itself) if and only if
  \((\varphi,\psi)^*\Hilm\) is a morphism.  Therefore,
  \((\varphi,\psi)^*\) gives a bijection between
  \(\cm_2\)\nb-representations of \((A,\alpha_2,\omega_2,\eta_2)\)
  on~\(B\) and \(\cm\)\nb-representations of
  \((A,\alpha,\omega,\eta)\) on~\(B\).  By the universal property,
  there is a unique isomorphism \(A\rtimes_{\alpha,\omega,\eta} \cm
  \cong A\rtimes_{\alpha_2,\omega_2,\eta_2} \cm_2\) that maps the
  universal \(\cm\)\nb-representation to the universal
  \(\cm_2\)\nb-representation.
\end{proof}

\begin{example}
  \label{exa:cm-equivalence_enlarge}
  Let \(\cm=(G,H,\tcm,\acm)\) be a crossed module and let
  \(G_1\subseteq G\) be a closed subgroup such that the map
  \(G_1\times H\to G\), \((g,h)\mapsto g\cdot\tcm(h)\), is open and
  surjective.  Let \(H_1\defeq \tcm^{-1}(G_1)\) and let \(\tcm_1\colon
  H_1\to G_1\) and \(\acm_1\colon G_1\to\Aut(H_1)\) be the
  restrictions of \(\tcm\) and~\(\acm\).  Then the embedding
  \((\varphi,\psi)\) of \(\cm_1=(G_1,H_1,\tcm_1,\acm_1)\) into~\(\cm\)
  is an equivalence by Lemma~\ref{lem:cm_equivalence_criterion}.  The
  second condition in Lemma~\ref{lem:cm_equivalence_criterion} is our
  assumption.  The first condition is that the group homomorphism
  \[
  H_1 \to \{(g_1,h)\in G_1\times H : \varphi(g_1)=\tcm(h) \},
  \qquad
  h_1\mapsto (\psi(h_1),\tcm_1(h_1)),
  \]
  is a homeomorphism.  Indeed, the map has the restriction of the
  first coordinate projection as a continuous inverse.

  Theorem~\ref{the:equivalence_for_actions} says that a Fell bundle
  over~\(\cm_1\) extends to one over~\(\cm\) in a natural and
  essentially unique way.
\end{example}

\begin{example}
  \label{exa:cm-equivalence_quotient}
  Let \(N\subseteq H\) be a closed \(\acm(G)\)-invariant subgroup such
  that~\(\tcm\) restricts to a homeomorphism from~\(N\) onto a closed
  subgroup of~\(G\); then \(\tcm(N)\) is normal in~\(G\).  Let
  \(G_2\defeq G/\tcm(N)\), \(H_2\defeq H/N\), and let \(\tcm_2\colon
  H_2\to G_2\) and \(\acm_2\colon G_2\to\Aut(H_2)\) be the induced
  maps.  Then the projection map \((\varphi,\psi)\) from~\(\cm\) to
  \(\cm_2=(G_2,H_2,\tcm_2,\acm_2)\) is an equivalence by
  Lemma~\ref{lem:cm_equivalence_criterion}.  The second condition in
  Lemma~\ref{lem:cm_equivalence_criterion} follows because already the
  projection \(G\to G_2\) is open and surjective.  The first condition
  requires the map
  \[
  H\to \{(g,h_2)\in G\times H_2 : \varphi(g)=\tcm_2(h_2) \},
  \qquad
  h\mapsto (\psi(h),\tcm(h)),
  \]
  to be a homeomorphism.  Injectivity and surjectivity of this map are
  clear, and its properness is not hard to check.  This implies that
  the map is a homeomorphism.

  Theorem~\ref{the:equivalence_for_actions} says that any Fell
  bundle over~\(\cm\) is the pull-back of a Fell bundle
  over~\(\cm_2\), which is unique up to isomorphism and depends
  naturally on the original Fell bundle over~\(\cm\).  The reason to
  expect this is that the unitary multipliers~\(\u_h\) for \(h\in
  N\) trivialise our Fell bundle over \(N\)\nb-cosets.
\end{example}

\begin{example}
  \label{exa:grouplike_classical}
  Even more specially, assume that~\(\cm\) is group-like, that is,
  \(\tcm\)~is a homeomorphism onto a closed (normal) subgroup of~\(G\).
  Theorem~\ref{the:equivalence_for_actions} says that
  \(\cm\)\nb-actions are equivalent to actions of the locally compact
  group~\(G/\tcm(H)\) viewed as a crossed module.  Actions of the
  latter are equivalent to Fell bundles over the locally compact
  group~\(G/\tcm(H)\) in the usual sense.  Actions of~\(\cm\) are a
  Fell bundle analogue of Green twisted actions.  In particular, any
  Green twisted action of \((G,\tcm(H))\) gives rise to a Fell bundle
  over~\(G/\tcm(H)\).

  For discrete groups, this equivalence is already contained
  in~\cite{Echterhoff-Quigg:InducedCoactions}.
\end{example}

We have defined when a crossed module homomorphism is an
equivalence.  Since its inverse is usually not described by a
crossed module homomorphism, we are led to the following equivalence
relation for crossed modules:

\begin{definition}
  \label{def:cm_equivalent}
  Two crossed modules of locally compact groups \(\cm\) and~\(\cm'\)
  are \emph{equivalent} if they are connected by a chain of crossed
  module homomorphisms
  \[
  \cm=\cm_0 \leftarrow \cm_1 \rightarrow \cm_2
  \leftarrow \cm_3 \rightarrow \dotsb \leftarrow \cm'
  \]
  where each arrow is a crossed module homomorphism that is an equivalence.
\end{definition}

Since equivalence of \(2\)\nb-categories formulated in terms of functors
is a symmetric relation (quasi-inverses exist and are again
functors), equivalent crossed modules have equivalent action
\(2\)\nb-categories \(\Corr(\cm)\) by
Theorem~\ref{the:equivalence_for_actions}.  The topological group
\(\pi_1(\cm)\) and the topological \(\pi_1(\cm)\)-module
\(\pi_2(\cm)\) are invariant under equivalence because, as already mentioned,
equivalences implemented by homomorphisms preserve \(\pi_1\) and~\(\pi_2\).

\subsection{Simplification of thin crossed modules}
\label{sec:simplification_thin}

We call a crossed module \(\cm=(G,H,\tcm,\acm)\) \emph{thin}
if~\(\tcm\) is injective and has dense range.  In the discrete case,
this implies that~\(\cm\) is equivalent to the trivial crossed
module.  An interesting example is the crossed
module associated to a dense embedding \(\Z\to\Torus\), which acts
on the corresponding noncommutative torus
(see~\cite{Buss-Meyer-Zhu:Non-Hausdorff_symmetries}).  Being thin is
an invariant of equivalence of crossed modules.  In a thin crossed
module, the action~\(\acm\) is dictated by
\(\tcm(\acm_g(h))=g\tcm(h)g^{-1}\) because~\(\tcm\) is injective.

We want to simplify thin crossed modules up to equivalence.  The
best result is available if both \(G\) and~\(H\) are Lie groups (we
allow an arbitrary number of connected components).  In that case,
the following theorem gives a complete classification up to
equivalence.

\begin{theorem}
  \label{the:thin_Lie_equivalent_Abelian}
  Any thin crossed module of Lie groups is equivalent to one where
  \(G=\R^n\) for some \(n\in\N\) and~\(H\) is a dense subgroup
  of~\(G\) with the discrete topology.  And it is equivalent to one
  where \(G=\Torus^n\) for some \(n\in\N\) and~\(H\) is a dense
  subgroup of~\(G\) with the discrete topology.

  Two thin crossed modules \(\cm_i=(G_i,H_i,\tcm_i,\acm_i)\) with
  \(G_i=\R^{n_i}\) for \(i=1,2\) and discrete~\(H_i\) are equivalent
  if and only if there is an invertible linear map
  \(\R^{n_1}\to\R^{n_2}\) mapping \(\tcm_1(H_1)\)
  onto~\(\tcm_2(H_2)\).
\end{theorem}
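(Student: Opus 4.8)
The plan is to reduce an arbitrary thin crossed module of Lie groups, through a chain of equivalences assembled from the two operations in Examples~\ref{exa:cm-equivalence_enlarge} and~\ref{exa:cm-equivalence_quotient}, to the asserted normal form, and then to prove the classification in the second part using the invariance of the arrow groupoid (Definition~\ref{def:arrow_groupoid}). First I would reduce to \(G\) connected. Since \(\tcm(H)\) is dense it meets every connected component of \(G\), so the map \(G_0\times H\to G\), \((g,h)\mapsto g\tcm(h)\), is surjective; its differential at the identity is \((X,Y)\mapsto X+\dd\tcm(Y)\), whose image contains \(\mathfrak g_0=\mathfrak g\), so the map is a submersion and in particular open. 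Hence the inclusion of \((G_0,\tcm^{-1}(G_0),\dots)\) into~\(\cm\) is an equivalence by Example~\ref{exa:cm-equivalence_enlarge}, and we may assume \(G\) connected.

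The heart of the argument is reducing \(H\) to a discrete group, and \textbf{this is the main structural obstacle.} I would alternate the two basic moves. Whenever the identity component \(H_0\) maps \emph{homeomorphically} onto a closed subgroup of~\(G\)—as happens for the compact and rigidly embedded semisimple directions, since a continuous injection of a compact group is a homeomorphism onto a closed image—I would quotient that direction out by Example~\ref{exa:cm-equivalence_quotient}; the relevant subgroup is \(\acm(G)\)-invariant because each \(\acm_g\) is a continuous automorphism preserving \(H_0\). The complementary difficulty is a direction of \(H_0\) whose image is \emph{not} closed, the model being the dense winding \(\R\to\Torus^2\); such a direction I would remove by Example~\ref{exa:cm-equivalence_enlarge}, shrinking \(G\) to a closed subgroup~\(G_1\) whose Lie algebra is transverse to \(\dd\tcm(\mathfrak h_0)\), so that \(G_1\cdot\tcm(H)=G\) stays an open surjection while \(\tcm^{-1}(G_1)\) becomes discrete. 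Ensuring that at each stage one of the two moves applies—in particular the existence of a transverse \emph{closed} subgroup, which is delicate because a complement to a subspace of~\(\mathfrak g\) need not integrate to a closed subgroup—is a genuine Lie-theoretic input, to be organised through the structure theory of connected Lie groups. Once \(H\) is discrete, \(G\) is automatically abelian: for \(h\in H\) the orbit \(g\mapsto\acm_g(h)\) is continuous from the connected group~\(G\) into the discrete group~\(H\), hence constant, so \(g\tcm(h)g^{-1}=\tcm(\acm_g(h))=\tcm(h)\); thus \(\tcm(H)\) is central, and being dense it forces \(G=Z(G)\). A connected abelian Lie group is \(\R^a\times\Torus^b\), with \(H\) a dense subgroup carrying the discrete topology. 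To pass to the \(\R^n\) form I would take the universal cover \(p\colon\R^{a+b}\to\R^a\times\Torus^b\) and replace \(H\) by \(p^{-1}(\tcm(H))\) with the discrete topology; the pair \((p,p)\) is a crossed module homomorphism satisfying both conditions of Lemma~\ref{lem:cm_equivalence_criterion} (\(p\) is an open surjection and \(H\) is the required pullback, and preimages of dense sets under~\(p\) are dense), hence an equivalence. For the \(\Torus^n\) form I would pick linearly independent \(v_1,\dots,v_n\) in the dense subgroup \(D\defeq\tcm(H)\subseteq\R^n\); the full lattice \(\Lambda\defeq\Z v_1+\dots+\Z v_n\) is a closed subgroup onto which \(\tcm\) restricts homeomorphically (from the discrete \(H\)), so Example~\ref{exa:cm-equivalence_quotient} applied to \(N\defeq\tcm^{-1}(\Lambda)\) yields \(G=\R^n/\Lambda\cong\Torus^n\) with \(H\) still dense and discrete.

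For the ``if'' direction of the classification, a linear isomorphism \(T\colon\R^{n_1}\to\R^{n_2}\) with \(T(\tcm_1(H_1))=\tcm_2(H_2)\) defines a crossed module homomorphism \((\varphi,\psi)=(T,\tcm_2^{-1}T\tcm_1)\); condition~\ref{en:cm_equivalence_2} of Lemma~\ref{lem:cm_equivalence_criterion} holds because \(\pi_2(g_1,h_2)=T(g_1)+\tcm_2(h_2)\) is an open surjection, and condition~\ref{en:cm_equivalence_1} is exactly the statement \(T^{-1}(\tcm_2 H_2)=\tcm_1 H_1\), so the two crossed modules are equivalent. For the converse I would first record that, conversely, \emph{any} homomorphism of crossed modules between two \(\R^n\) forms that is an equivalence is itself such a linear isomorphism: openness in condition~\ref{en:cm_equivalence_2} forces \(\varphi(U)\) open, hence \(\varphi\) surjective; and condition~\ref{en:cm_equivalence_1} forces the fibres of \(\varphi\) to be single points (so \(\ker\varphi=0\)) and \(\varphi^{-1}(\tcm_2 H_2)=\tcm_1 H_1\), whence \(\varphi(\tcm_1 H_1)=\tcm_2 H_2\).

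The remaining point, which I expect to be the crux of the classification, is that Definition~\ref{def:cm_equivalent} permits a \emph{zig-zag} of equivalences through intermediate crossed modules that are not in normal form, so the previous observation does not by itself close the argument. Note that the invariants \(\pi_1(\cm)=\coker\tcm\) and \(\pi_2(\cm)=\ker\tcm\) both vanish here, so a finer invariant is needed. To supply it I would use that equivalent crossed modules have Morita equivalent arrow groupoids (Definition~\ref{def:arrow_groupoid}, together with Definition~\ref{def:cm_equivalent}); for the normal form this is the free, \'etale translation groupoid \(D\ltimes\R^n\) of a dense subgroup, and the task is to show that its Morita equivalence class as a topological groupoid—enriched by the abelian group structure furnished by the functor~\(M\)—recovers the pair \((\R^n,D)\) up to linear isomorphism. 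This identification is the main obstacle; for \(n=1\) it is precisely the classification of the associated Kronecker-type groupoids, equivalently of the noncommutative tori \(A_\theta\), by \(\theta\) modulo \(GL_2(\Z)\), that is, by \(D=\Z+\Z\theta\) up to scaling.
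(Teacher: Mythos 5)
Your outline agrees with the paper at the easy stages (restriction to the identity component of \(G\), Abelianness of \(G\) once \(H\) is discrete, the passage between the \(\R^n\) and \(\Torus^n\) forms via a covering respectively a lattice \(\Lambda\cong\Z^n\), and the ``if'' direction of the classification), but the two steps that you yourself flag as ``the main structural obstacle'' and ``the main obstacle'' are genuinely missing, and they are exactly the content of the theorem. Concerning the reduction to discrete \(H\): your alternating strategy requires, whenever \(\tcm\) is not a homeomorphism of the identity component of \(H\) onto a closed subgroup, a \emph{closed} subgroup \(G_1\subseteq G\) whose Lie algebra is complementary to \(\dd\tcm(\mathfrak{h})\). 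This can fail already at the Lie algebra level: \(\dd\tcm(\mathfrak{h})\) is an \emph{ideal} of \(\mathfrak{g}\) (differentiate \(\Ad_g(\dd\tcm(\mathfrak{h}))=\dd\tcm(\mathfrak{h})\), which follows from \(g\tcm(h)g^{-1}=\tcm(\acm_g(h))\)), and an ideal need not admit any complementary subalgebra -- the centre of the Heisenberg algebra is the standard example -- let alone one integrating to a closed subgroup. The paper's device, absent from your proposal, is to pass to the universal covering of \(G\) \emph{first}, pulling \(H\) back along it (an equivalence by Example~\ref{exa:cm-equivalence_quotient}): once \(G\) is simply connected, Lie's theorems applied to the quotient map \(\mathfrak{g}\to\mathfrak{g}/\mathfrak{h}\) yield a homomorphism \(G\to K\) to a simply connected group whose kernel is a closed, connected, normal subgroup with Lie algebra \(\mathfrak{h}\), and \(\tcm\) restricts to a homeomorphism from the identity component of \(H\) onto this kernel. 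A single application of Example~\ref{exa:cm-equivalence_quotient} then makes \(H\) discrete; no transverse closed subgroup, and no alternation of moves, is ever needed.

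Concerning the classification, your reduction of the ``only if'' direction to recovering the pair \((\R^n,D)\) from the Morita class of the arrow groupoid enriched by the functor~\(M\) is left unproven, and the \(n=1\) anchor you offer lives in the wrong category: the classification of the algebras \(A_\theta\) up to Morita equivalence concerns \(\Cst\)\nb-algebras, not topological groupoids enriched by~\(M\), covers only \(n=1\), and imports a far heavier theorem than is needed. The paper closes the zig-zag issue of Definition~\ref{def:cm_equivalent} with an elementary invariant: the pair \((\mathfrak{g}/\mathfrak{h},\,T(\cm))\), where \(T(\cm)\subseteq\mathfrak{g}/\mathfrak{h}\) is the set of all \(X\) such that some (equivalently, every) lift \(\hat{X}\in\mathfrak{g}\) satisfies \(\exp(\hat{X})\in\tcm(H)\). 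Every crossed module homomorphism induces a map of such pairs; an equivalence induces an isomorphism, with surjectivity of \(T(\cm_1)\to T(\cm_2)\) coming from the induced bijection \(G_1/\tcm_1(H_1)\to G_2/\tcm_2(H_2)\); hence the pair is preserved along any zig-zag. On a crossed module in normal form it evaluates to exactly \((\R^n,\tcm(H))\), so an isomorphism of invariants is precisely an invertible linear map carrying \(\tcm_1(H_1)\) onto \(\tcm_2(H_2)\). You should replace your groupoid-Morita scheme by this (or an equally concrete) invariant; as written, the classification half of your proposal restates the problem rather than solving it.
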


\begin{proof}
  Let \(\cm=(G,H,\tcm,\acm)\) be a thin crossed module of Lie
  groups.  Let \(G_1\subseteq G\) be the connected component of the
  identity.  Let \(H_1= \tcm^{-1}(G_1)\subseteq H\).  Since
  \(\tcm(H)\) is dense in~\(G\), it meets every connected component.
  Hence we are in the situation of
  Example~\ref{exa:cm-equivalence_enlarge}, and~\(\cm\) is
  equivalent to \(\cm_1=(G_1,H_1,\tcm_1,\acm_1)\), where \(\tcm_1\)
  and~\(\acm_1\) are the restrictions of \(\tcm\) and~\(\acm\).  The
  Lie group~\(G_1\) is connected.

  Let \(\varphi\colon G_2\to G_1\) be the universal covering
  of~\(G_1\) and let
  \[
  H_2 \defeq \{(h_1,g_2) \in H_1\times G_2 : \tcm_1(h_1) = \varphi(g_2)\}
  \]
  be its pull-back to a covering of~\(H_1\).  Let \(\psi\colon H_2\to
  H_1\) and \(\tcm_2\colon H_2\to G_2\) be the coordinate projections.
  Then~\(\psi\) is a covering map; its kernel is a discrete central
  subgroup \(N\subseteq H_2\).  The homomorphism~\(\tcm_2\) maps~\(N\)
  homeomorphically onto the kernel of the covering map~\(\varphi\).
  Hence we are in the situation of
  Example~\ref{exa:cm-equivalence_quotient}, and~\(\cm_1\) is
  equivalent to \(\cm_2= (G_2,H_2,\tcm_2,\acm_2)\), where~\(\acm_2\)
  lifts~\(\acm_1\).  (The composite \(\cm_2\to\cm_1\to\cm\) is also
  an equivalence, so we could have gone to~\(\cm_2\) in only one
  step.)

  The Lie group~\(G_2\) is simply connected.  Let~\(N_2\) be the
  connected component of the identity in~\(H_2\).  We claim
  that~\(\tcm_2\) maps~\(N_2\) homeomorphically onto a closed subgroup
  of~\(G_2\).  We are going to prove this later, so let us assume this
  for a moment.  Then we are again in the situation
  of Example~\ref{exa:cm-equivalence_quotient}.  Letting \(G_3\defeq
  G_2/\tcm_2(N_2)\), \(H_3\defeq H_2/N_2\), and taking the induced maps
  \(\tcm_3\) and~\(\acm_3\), we get a crossed module equivalence
  \(\cm_2\to\cm_3=(G_3,H_3,\tcm_3,\acm_3)\).

  The Lie group~\(H_3\) is discrete, and~\(G_3\) is still a simply
  connected Lie group because it is a quotient of a simply connected
  group by a closed, connected, normal subgroup.  The conjugation
  action \(\acm_3\colon G_3\to\Aut(H_3)\) is a continuous action of a
  connected group on a discrete space.  It must be trivial.  Hence
  \(g\tcm_3(h)g^{-1}=\tcm_3(h)\) for all \(h\in H_3\).  Since we
  started with a thin crossed module, \(\tcm_3(H_3)\) is dense
  in~\(G_3\), so the above equality extends to all of~\(G_3\), proving
  that~\(G_3\) is Abelian.  Any Abelian simply connected Lie group is
  of the form~\(\R^n\) for some \(n\in\N\).  Thus~\(\cm_3\) is a
  crossed module equivalent to~\(\cm\) with \(G_3=\R^n\) and
  discrete~\(H_3\).

  Now we show that~\(\tcm_2\) is a homeomorphism from~\(N_2\) onto a
  closed subgroup of~\(G_2\) if~\(G_2\) is simply connected.  Let
  \(\Lah\) and~\(\Lag\) be the Lie algebras
  of~\(H_2\) and~\(G_2\), respectively.  The dense
  embedding~\(\tcm_2\) induces an injective map
  \(\Lah\to\Lag\), whose image is a Lie ideal.
  Hence there is a Lie algebra \(\Lag/\Lah\) and a
  Lie algebra homomorphism
  \(\Lag\to\Lag/\Lah\).  By Lie's theorems,
  there is a simply connected Lie group~\(K\) with Lie algebra
  \(\Lag/\Lah\) and a Lie group homomorphism
  \(G_2\to K\) that induces
  \(\Lag\to\Lag/\Lah\) on the Lie algebras.
  The kernel of this homomorphism is a connected, closed, normal
  subgroup, and its Lie algebra is~\(\Lah\).  This closed
  normal subgroup is~\(\tcm_2(N_2)\) because the exponential map is
  a local homeomorphism from the Lie algebra to the Lie group near
  the identity element.  Moreover, the map~\(\tcm_2\) is indeed a
  homeomorphism from~\(N_2\) onto this closed normal subgroup.

  To get another equivalent crossed module~\(\cm_4\) with
  \(G_4\cong\Torus^n\), we choose elements \(h_1,\dotsc,h_n\in H_3\)
  whose images form a basis in~\(\R^n\), using the density of
  \(\tcm(H_3)\) in \(G_3=\R^n\).  These elements generate a
  subgroup~\(N_3\) isomorphic to~\(\Z^n\) in~\(H_3\), which is
  mapped by~\(\tcm_3\) onto a discrete subgroup in~\(G_3\).  Using
  Example~\ref{exa:cm-equivalence_quotient} once again, we find
  that~\(\cm_3\) is equivalent to~\(\cm_4\) with \(G_4=G_3/N_3\cong
  \Torus^n\), \(H_4=H_3/N_3\) and \(\tcm_4\) and~\(\acm_4\) induced
  by \(\tcm_3\) and~\(\acm_3\).  The quotient~\(H_4\) is still
  discrete, and mapped by~\(\tcm_4\) onto a dense subgroup in the
  torus~\(G_4\).  Thus~\(\cm_4\) has the desired
  form.

  Finally, we show that crossed modules with \(G_i\cong \R^{n_i}\)
  and discrete~\(H_i\) are only equivalent when they are isomorphic
  through some invertible linear map \(G_1\to G_2\) that maps
  \(\tcm(H_1)\) onto~\(\tcm(H_2)\).

  We observe first that the quotient Lie algebra
  \(\Lag/\Lah\) for a crossed module~\(\cm\) is
  invariant under equivalence.  Any homomorphism of crossed modules
  \(\cm_1\to\cm_2\) induces a pair of Lie algebra homomorphisms
  \(\Lah_1\to\Lah_2\) and
  \(\Lag_1\to\Lag_2\) that intertwine the
  differentials of \(\tcm_1\) and~\(\tcm_2\) and hence induce a
  homomorphism \(\Lag_1/\Lah_1
  \to\Lag_2/\Lah_2\).  It is not hard to see that
  this map is invertible if the homomorphism is an equivalence.
  Roughly speaking, \(\Lag/\Lah\) is the tangent
  space at the unit element in~\(\pi_1(\cm)\), and it is invariant
  because~\(\pi_1(\cm)\) as a topological group is invariant.

  If \(X\in\Lag/\Lah\) and one representative
  \(\hat{X}\in\Lag\) has the property that \(\exp(\hat{X})\in
  \tcm(H)\), then this holds for all representatives.  Let us denote
  this subset of \(\Lag/\Lah\) by \(T(\cm)\).  A
  crossed module homomorphism \(\cm_1\to\cm_2\) maps \(T(\cm_1)\to
  T(\cm_2)\), and if it is an equivalence, then it maps \(T(\cm_1)\)
  onto \(T(\cm_2)\) because it induces a bijection between
  \(G_i/\tcm_i(H_i)\) for \(i=1,2\).  Hence the pair consisting of the
  Lie algebra \(\Lag/\Lah\) and the subset \(T(\cm)\)
  is invariant under equivalence of crossed modules (in the sense that
  equivalent crossed modules have canonically isomorphic invariants).

  If a crossed module has discrete~\(H\) and \(G=\R^n\), then we
  identify \(\Lag/\Lah=\R^n\) in the obvious way and
  find that the subset of~\(X\) with \(\exp(\hat{X})\in \tcm(H)\) is
  precisely~\(\tcm(H)\).  This determines our crossed module
  because~\(\tcm\) is injective, \(\acm\) is trivial, and~\(H\) is
  discrete.  Hence crossed modules of this form are only equivalent
  when they are isomorphic.
\end{proof}

Given a thin crossed module \(\cm=(G,H,\tcm,\acm)\), the proof of
Theorem~\ref{the:thin_Lie_equivalent_Abelian} shows that
\(\Lag/\Lah\) is the Abelian Lie algebra~\(\R^n\)
for some \(n\in\N\), that \(T(\cm)\subseteq
\Lag/\Lah\) is a dense subgroup, and that~\(\cm\) is
equivalent to the thin Abelian crossed module~\(\cm'\) with \(G'=
\Lag/\Lah\), \(H'=T(\cm)\) with the discrete
topology, \(\tcm'\) the inclusion map, and trivial~\(\acm'\).  Hence
we get an explicit Abelian replacement for~\(\cm\) and do not have
to follow the steps in the above proof to construct it.

\begin{example}
  \label{exa:R_in_Tsquare}
  Consider the thin crossed module from a dense embedding
  \(\R\subset\Torus^2\) given by a line of irrational slope.  This
  is not yet in standard form because~\(\R\) is not discrete.  Of
  course, the image of the connected component of the identity is
  not closed in this example.  But when we pass to the universal
  covering~\(\R^2\) of~\(\Torus^2\), then we get the equivalent
  crossed module \(\R+\Z^2\subset \R^2\).  Now the image of the
  connected component is a line of irrational slope, which is closed
  in~\(\R^2\), and we may divide it out to get an equivalent crossed
  module \(\Z^2\to \R\).  This is now in standard form, and indeed
  of the expected form \(G'= \Lag/\Lah\),
  \(H'=T(\cm)\).
\end{example}

\begin{example}
  \label{exa:disretise}
  Let \(G=\Torus\), and let \(H=G\) with the discrete topology.  This
  is an example of a crossed module which is already in the second
  standard form of Theorem~\ref{the:thin_Lie_equivalent_Abelian}.
  Theorem~\ref{the:thin_Lie_equivalent_Abelian} works also if our
  Lie groups have uncountably many components.
\end{example}

The standard form with \(G=\Torus^n\) in
Theorem~\ref{the:thin_Lie_equivalent_Abelian} is sometimes less
useful because it is less unique, but it has the advantage
that~\(G\) is compact.

Now we turn to general locally compact groups.  Here a complete
classification seems hopeless, and even Abelianness fails in general
(see Example~\ref{exa:noncommutative_pione} below).  The following
theorem is what we can prove:

\begin{theorem}
  \label{the:thin_locally_compact_normalise}
  Any thin crossed module of locally compact groups is equivalent to
  a thin crossed module \(\cm'=(G',H',\tcm',\acm')\) with
  compact~\(G'\) and discrete~\(H'\).

  The following are equivalent:
  \begin{enumerate}
  \item \(\cm\) is equivalent to a thin crossed module~\(\cm'\) with
    compact Abelian~\(G'\), discrete~\(H'\), and trivial~\(\acm'\);
  \item \(\cm\) is equivalent to a crossed module~\(\cm'\) with
    trivial~\(\acm'\);
  \item the commutator map \(G\times G\to G\), \((x,y)\mapsto
    xyx^{-1}y^{-1}\), factors as \(\tcm\circ\gamma\) for a
    continuous map \(\gamma\colon G\times G\to H\).
  \end{enumerate}
\end{theorem}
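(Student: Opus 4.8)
The plan is to pass first to a convenient normal form and then analyse the three conditions there. The crucial elementary observation, which I would record at the outset, is that for a thin crossed module the commutator map of~\(G\) is already pinned down on a dense subgroup by the commutator in~\(H\): if \(x=\tcm(h)\) and \(y=\tcm(k)\), the identity \(\acm_{\tcm(h)}(k)=hkh^{-1}\) gives \(xyx^{-1}y^{-1}=\tcm(hkh^{-1}k^{-1})\). Hence on \(\tcm(H)\subseteq G\) the commutator map equals \(\tcm\) applied to the commutator in~\(H\), and condition~\ref{the:thin_locally_compact_normalise} reformulates: the ``\(H\)\nb-commutator'' extends continuously over all of \(G\times G\).

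For the first assertion I would mirror the Lie-group argument of Theorem~\ref{the:thin_Lie_equivalent_Abelian}, replacing the universal cover by the pro-Lie structure theory of locally compact groups. By van Dantzig's theorem \(G/G_0\) is totally disconnected and contains a compact open subgroup, whose preimage is an open subgroup \(G_1\supseteq G_0\) with \(G_1/G_0\) compact; since \(\tcm(H)\) is dense it meets every coset of~\(G_1\), so \(G_1\times H\to G\) is open and surjective and Example~\ref{exa:cm-equivalence_enlarge} replaces \(\cm\) by an equivalent almost connected crossed module. I would then quotient out the identity component \(H_0\) of~\(H\) (characteristic, hence \(\acm(G)\)\nb-invariant) via Example~\ref{exa:cm-equivalence_quotient}, and finally quotient a finitely generated \(N\subseteq H\) whose image is cocompact, exactly as \(\Z^n\subseteq\R^n\) was used to reach \(\Torus^n\). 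The step that was hard in the Lie case is hard again: to apply Example~\ref{exa:cm-equivalence_quotient} I must check that \(\tcm\) restricts to a homeomorphism of \(H_0\) (respectively of~\(N\)) onto a \emph{closed} subgroup of~\(G\), which can fail before passing to a cover. I expect to resolve this by writing the connected group \(G_0\) as an inverse limit of Lie groups and transporting the Lie conclusion of Theorem~\ref{the:thin_Lie_equivalent_Abelian} through the limit.

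For the list of equivalences I would close the cycle \ref{en:cm_equivalence_1}\(\Rightarrow\)\ref{en:cm_equivalence_2}\(\Rightarrow\)(3)\(\Rightarrow\)(1). The implication (1)\(\Rightarrow\)(2) is a special case. For (2)\(\Rightarrow\)(3): since being thin is an equivalence invariant, a model \(\cm'\) with trivial \(\acm'\) is automatically thin, so \(H'\) is Abelian and by the displayed computation the commutator of \(G'\) vanishes on the dense set \(\tcm'(H')\times\tcm'(H')\), hence on all of \(G'\); thus \(G'\) is Abelian and (3) holds for \(\cm'\) with \(\gamma\equiv1\). It then remains to see that (3) is invariant under equivalence. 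Along a homomorphism \((\varphi,\psi)\colon\cm_1\to\cm_2\) this is direct: by Lemma~\ref{lem:cm_equivalence_criterion}, \(H_1\) is the pull-back \(G_1\times_{G_2}H_2\), and \((x,y)\mapsto\bigl(xyx^{-1}y^{-1},\gamma_2(\varphi x,\varphi y)\bigr)\) is a continuous factorisation for \(\cm_1\) whenever \(\gamma_2\) is one for \(\cm_2\). Full invariance along the zigzag of Definition~\ref{def:cm_equivalent} I would deduce from the intrinsic form of (3) as the existence of a continuous natural isomorphism between the two functors \((x,y)\mapsto xy\) and \((x,y)\mapsto yx\) built from the multiplication functor~\(M\); such a datum is manifestly preserved by equivalences.

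Finally, (3)\(\Rightarrow\)(1) is the substantive direction, and here I would exploit the normal form: by the first assertion and the invariance just established, I may assume \(G\) compact, \(H\) discrete, and \(c_G=\tcm\circ\gamma\) with \(\gamma\) locally constant. Since \(\gamma(1,y)=\gamma(x,1)=1\), local constancy forces \(xyx^{-1}y^{-1}=1\) whenever \(x\) or \(y\) lies in \(G_0\); thus \(G_0\) is central and Abelian, and all commutators lie in the finite set \(\tcm(\gamma(G\times G))\subseteq\tcm(H)\). The subgroup \(N_0\subseteq H\) generated by the elements \(\gamma(x,y)\) is \(\acm(G)\)\nb-invariant, because \(\acm_g(\gamma(x,y))=\gamma(gxg^{-1},gyg^{-1})\), and \(\tcm(N_0)\) is the derived subgroup of~\(G\); quotienting by it should yield an equivalent crossed module with Abelian \(G'\), discrete \(H'\), and trivial \(\acm'\). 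I expect the main obstacle to be precisely the recurring closedness issue: \(\tcm(N_0)\) is finitely generated but, inside the compact group~\(G\), need not be closed, so Example~\ref{exa:cm-equivalence_quotient} does not apply verbatim and one must first arrange the relevant subgroup to be closed with homeomorphic preimage. This is the crux of both assertions, and it is where the compact/discrete normal form together with the structure theory of locally compact groups is indispensable.
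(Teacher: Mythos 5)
Your strategy for the normal form follows the paper's at a high level, but the steps you flag as hard are genuinely harder than your sketch allows, and your outline is also missing a step. First, ``transporting the Lie conclusion through the inverse limit'' does not resolve the closedness problem: before any Lie-theoretic argument can begin, the paper must first make \(H^0\) a Lie group by dividing out its \emph{maximal compact normal subgroup} (an \(\acm\)\nb-invariant choice whose existence needs an argument), then use finite-dimensionality of \(\mathfrak{h}\) to select a \emph{single} compact normal \(K\subseteq G\) with \(G/K\) a Lie group and \(\mathfrak{h}\to\mathfrak{g/k}\) injective, pass to a covering making \(G/K\) simply connected, and only then identify \(\tcm(H^0)\) with the closed connected normal subgroup integrating \(\mathfrak{h}\) (plus a homotopy-group argument to see that this subgroup is simply connected, so that the covering \(H^0\to L\) is a homeomorphism). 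A naive limit over the quotients \(G/K_i\) gives neither injectivity on Lie algebras for each \(i\) nor closedness of the image in \(G\) itself. Second, your chain never makes \(H\) discrete: killing \(H_0\) leaves \(H\) only totally disconnected, and the paper needs the extra step of dividing out a compact open \(\acm\)\nb-invariant subgroup \(U'=\bigcap_{g\in G}\acm_g(U)\), whose openness uses that \(G\) is by then almost connected. Third, your final cocompact-lattice step presupposes that \(G^0\) is central and the relevant Lie quotient of \(G\) is commutative; the paper derives this \emph{unconditionally} from density of \(\tcm(H)\) and discreteness of \(H\), whereas in your write-up centrality of \(G_0\) only appears later, under hypothesis (3).

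For \ref{the:thin_locally_compact_normalise}.3\(\Rightarrow\)\ref{the:thin_locally_compact_normalise}.1, the obstacle you identify as ``the crux'' is real, but the paper avoids it entirely, and this is the concrete missing idea in your proposal: one should \emph{not} quotient by the derived subgroup. Local constancy of \(\gamma\) together with \(\gamma(1,1)=1\) yields an identity neighbourhood \(U\) with \(xy=yx\) for all \(x,y\in U\); the subgroup \(G'\) generated by \(U\) is then open (hence closed and compact inside the compact group) and Abelian, and Example~\ref{exa:cm-equivalence_enlarge} applies painlessly, since for an open subgroup the map \(G'\times H\to G\) is open and surjective by density of \(\tcm(H)\). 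So the correct move is to \emph{enlarge} to an Abelian open subgroup rather than to kill \(\tcm(N_0)\), whose closedness indeed cannot be arranged in general. Your treatment of \ref{the:thin_locally_compact_normalise}.2\(\Rightarrow\)\ref{the:thin_locally_compact_normalise}.3 and of the equivalence-invariance of condition \ref{the:thin_locally_compact_normalise}.3 does match the paper: the reformulation as a bisection conjugating \(M\) onto \(M\circ\mathrm{flip}\) is exactly the paper's (also only sketched) argument, and your one-directional pull-back of \(\gamma\) through the fibre product \(G_1\times_{G_2}H_2\) of Lemma~\ref{lem:cm_equivalence_criterion} is a nice concrete supplement to it, though by itself it only handles one direction of the zigzag in Definition~\ref{def:cm_equivalent}.
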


\begin{proof}
  We construct a finite chain of crossed modules
  \(\cm_i=(G_i,H_i,\tcm_i,\acm_i)\) equivalent to~\(\cm\) with
  increasingly better properties.  Each step uses
  Example~\ref{exa:cm-equivalence_enlarge} or
  Example~\ref{exa:cm-equivalence_quotient}, which describe how the
  maps \(\tcm_i\) and~\(\acm_i\) and one of the groups in the next
  step are constructed.  We must only describe the subgroup to which
  we restrict in Example~\ref{exa:cm-equivalence_enlarge} or the
  normal subgroup we divide out in
  Example~\ref{exa:cm-equivalence_quotient}.

  By the structure theory of locally compact groups, the group~\(G\)
  contains an open, almost connected subgroup~\(G_1\).  Restricting to
  this subgroup as in Example~\ref{exa:cm-equivalence_enlarge}, we get
  an equivalent crossed module~\(\cm_1\) with almost
  connected~\(G_1\).

  Next we divide out suitable compact, \(\acm_1\)\nb-invariant (hence
  normal) subgroups in~\(H_1\).  On such subgroups, the map~\(\tcm_1\)
  is automatically a homeomorphism onto a compact subgroup of~\(G_3\),
  so that we may divide it out using
  Example~\ref{exa:cm-equivalence_quotient}.  Let~\(H_1^0\) be the
  connected component of the identity in~\(H_1\).  This closed
  subgroup is intrinsically defined and hence \(\acm_1\)\nb-invariant.
  Being connected, it contains a compact normal subgroup~\(N\) so
  that~\(H_1^0/N\) is a Lie group.  If \(N_1\) and~\(N_2\) are such
  subgroups, then so is \(N_1\cdot N_2\) because of normality.  Hence
  there is a maximal compact normal subgroup~\(N_1\) in~\(H_1^0\).
  This subgroup of~\(H_1^0\) is intrinsically defined and hence
  \(\acm_1\)\nb-invariant.  We may now pass to an equivalent quotient
  crossed module~\(\cm_2\) with \(H_2=H_1/N_1\).  The group~\(G_2\) is
  still almost connected, and \(H_2^0=H_1^0/N_1\) is now a Lie group.

  Since~\(G_2\) is almost connected, it contains a decreasing net of
  compact normal subgroups~\(K_i\) such that~\(G_2/K_i\) are Lie
  groups and \(\bigcap K_i=\{1\}\).  The map~\(\tcm_1\) induces Lie
  algebra homomorphisms from the Lie algebra~\(\Lah_2\)
  of~\(H_2^0\) to the Lie algebras of~\(G_2/K_i\).  Since~\(\tcm_1\)
  is injective and~\(\Lah_2\) is finite-dimensional, this map
  is injective for some~\(i\).  Thus there is a compact normal
  subgroup \(K\subseteq G_2\) such that~\(G_2/K\) is a Lie group and
  the map \(\Lah_2\to \Lag_2/\Lak\) is injective.

  Passing to a subgroup and a covering group \(G_3\to G_2\) as in
  the proof of Theorem~\ref{the:thin_Lie_equivalent_Abelian}, we now
  find an equivalent crossed module~\(\cm_3\) for which~\(G_3/K\) is
  simply connected.  The Lie algebras of \(H_2\) and~\(H_3\) are the
  same, so the map \(\Lah_3\to \Lag_3/\Lak\) remains
  injective.  Since~\(G_3/K\) is simply connected, there is a unique
  connected, closed, normal subgroup~\(L_3\) of~\(G_3/K\) whose Lie
  algebra is the image of~\(\Lah_3\) (see the proof of
  Theorem~\ref{the:thin_Lie_equivalent_Abelian}); a long exact
  sequence of homotopy groups using that~\(\pi_2\) vanishes for all
  Lie groups (such as \(G_3/KL_3\)) shows that~\(L_3\) is simply
  connected.  The map \(H_3^0\to L_3\) is the identity on Lie
  algebras and hence a covering map.  Since~\(L_3\) is already
  simply connected, it is a homeomorphism.  Then the map
  \(\tcm_3\colon H_3^0\to G_3\) is a homeomorphism onto its image as
  well.  Hence Example~\ref{exa:cm-equivalence_quotient} gives us an
  equivalent crossed module~\(\cm_4\) with \(H_4=H_3/H_3^0\).
  Now~\(H_4\) is totally disconnected, and~\(G_4\) is still almost
  connected.

  Since~\(H_4\) is totally disconnected, any action of a connected
  group on it is trivial.  In particular, the conjugation action
  of~\(G_4\) factors through~\(G_4/G_4^0\).  Since this group is
  compact, an intersection \(U'\defeq \bigcap_{g\in G_4} \acm_{g}(U)\)
  for an open subset \(U\subseteq H_4\) is again open.  If~\(U\) is a
  compact open subgroup, then this intersection is a compact, open,
  and \(\acm_4\)\nb-invariant subgroup in~\(H_4\).
  Example~\ref{exa:cm-equivalence_quotient} gives us an equivalent
  crossed module~\(\cm_5\) with \(H_5=H_4/U'\).  Now~\(H_5\) is
  discrete, and~\(G_5\) is still almost connected.

  Let \(g\in G_5^0\) belong to the connected component of the
  identity.  Since~\(G_5^0\) is connected and~\(H_5\) discrete,
  \(\acm_g(h)=h\) for all \(h\in H_5\).  Hence
  \(g\tcm_5(h)g^{-1}=\tcm_5(h)\) for all \(h\in H_5\).
  Since~\(\tcm_5(H)\) is dense in~\(G_5\), this shows that~\(g\) is
  central.  Hence the connected component~\(G_5^0\) is a central
  subgroup in~\(G_5\).

  The image of \(K_3\subseteq G_3\) in~\(G_5\) is a compact normal
  subgroup~\(K_5\) such that~\(G_5/K_5\) is connected, so
  that~\(G_5^0\) surjects onto it.  Since~\(G_5^0\) is central, the
  quotient group~\(G_5/K_5\) is a commutative Lie group.  As in the
  proof of Theorem~\ref{the:thin_Lie_equivalent_Abelian}, we now
  find a subgroup \(N_5\subseteq H_5\) isomorphic to~\(\Z^l\) whose
  image in~\(G_5/K_5\) is a lattice.  Dividing out this subgroup as
  in Example~\ref{exa:cm-equivalence_quotient}, we find an
  equivalent crossed module~\(\cm_6\) where~\(G_6\) is compact
  and~\(H_6\) is still discrete.  This proves our first statement.

  Assume now that the commutator map in~\(G\) factors through a
  continuous map \(G\times G\to H\); since~\(\tcm\) is injective,
  this just means that it is a map to~\(H\), and continuous as such.
  The crossed modules~\(\cm_i\) constructed above inherit this
  property in each step.  Thus the commutator map \(\gamma\colon
  G_6\times G_6\to G_6\) is a continuous map to~\(H_6\).
  Since~\(H_6\) is discrete, there is an open subset \(U\subseteq
  G_6\) with \(\gamma(x,y)=1\) for all \(x,y\in U\), that is,
  \(xy=yx\) for all \(x,y\in U\).  This remains so for~\(x,y\) in
  the subgroup~\(G'\) generated by~\(U\), which is open in~\(G_6\)
  because~\(U\) is open.  Example~\ref{exa:cm-equivalence_enlarge}
  shows that~\(G'\) is part of a crossed module equivalent
  to~\(\cm\).  This has Abelian compact~\(G'\), discrete~\(H'\), and
  hence trivial~\(\acm'\).

  As a consequence, \ref{the:thin_locally_compact_normalise}.3
  implies~\ref{the:thin_locally_compact_normalise}.1.  It is trivial
  that~\ref{the:thin_locally_compact_normalise}.1
  implies~\ref{the:thin_locally_compact_normalise}.2.  It remains to
  show that~\ref{the:thin_locally_compact_normalise}.3 follows,
  conversely, if~\(\cm\) is equivalent to a crossed module~\(\cm'\)
  with trivial~\(\acm'\).  Since~\(\cm'\) is again thin, this implies
  that \(G'\) and hence \(\coker\tcm'=\pi_1(\cm')\) is Abelian.  Then
  \(\pi_1(\cm) = G/\tcm(H)\) is Abelian as well because~\(\pi_1\) is
  invariant under equivalence of crossed modules.  This says that the
  commutator map of~\(G\) factors through~\(H\); it does not yet give
  the continuity of the factorisation.  We merely sketch how to prove
  this continuity.  The commutator map of~\(G'\) is constant and hence
  clearly a continuous map to~\(H'\).  To finish the proof, we must
  show that the existence of a continuous commutator map \(G\times
  G\to H\) for thin crossed modules is invariant under equivalence of
  crossed modules.

  We turn the functor \(M\colon (G\rtimes H)\times (G\rtimes H)\to
  (G\rtimes H)\) on the arrow groupoids
  in~\eqref{eq:mult_arrow_groupoid} into a generalised morphism
  (bispace); two functors give isomorphic bispaces if and only if
  they are related by conjugation with a bisection.  For a thin
  crossed module, a bisection that conjugates~\(M\) onto
  \(M\circ\mathrm{flip}\) is exactly the same as a continuous
  commutator map \(G\times G\to H\).  Thus the property of having a
  continuous commutator map \(G\times G\to H\) is equivalent to the
  property that \(M\) and \(M\circ\mathrm{flip}\) are equivalent as
  generalised morphisms.  This property is manifestly invariant
  under equivalence of crossed modules.
\end{proof}

\begin{lemma}
  \label{lem:thin_cm_commutative_sufficient}
  Let \(\cm=(G,H,\tcm,\acm)\) be a thin crossed module with a
  compact subset \(K\subseteq H\) such that~\(\cl{\tcm(K)}\)
  generates an open subgroup in~\(G\).  Then~\(\cm\) is equivalent
  to a crossed module~\(\cm'\) with trivial~\(\acm'\).
\end{lemma}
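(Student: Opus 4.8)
The plan is to show that the hypothesis is far stronger than it looks: it forces~\(\tcm\) to be a topological isomorphism, so that~\(\cm\) is group-like and hence equivalent to the trivial crossed module, whose action is trivial.

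First I would record that, since~\(K\) is compact and~\(G\) is Hausdorff, \(\tcm(K)\) is already compact and therefore closed, so \(\cl{\tcm(K)}=\tcm(K)\).  Writing \(G_0\defeq\langle\tcm(K)\rangle\) for the subgroup it generates, the hypothesis says that~\(G_0\) is open, and clearly \(G_0\subseteq\tcm(H)\) because \(\tcm(H)\) is a subgroup containing~\(\tcm(K)\).  Now comes the key observation: an open subgroup is also closed, so every coset \(gG_0\) is a nonempty open set; since \(\tcm(H)\) is dense, it meets each such coset, giving \(g\in\tcm(h)G_0\subseteq\tcm(H)\) for a suitable \(h\in H\).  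Hence \(\tcm(H)=G\), that is, the continuous injection~\(\tcm\) is a bijection.

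Next I would upgrade this continuous bijection to a homeomorphism.  Put \(H_0\defeq\tcm^{-1}(G_0)=\langle K\rangle\); this is open in~\(H\), hence locally compact, and it is \(\sigma\)-compact because \(H_0=\bigcup_n(K\cup K^{-1}\cup\{1\})^n\) is a countable union of compact sets.  The restriction \(\tcm|_{H_0}\colon H_0\to G_0\) is a continuous surjective homomorphism onto the locally compact group~\(G_0\), so by the open mapping theorem for \(\sigma\)-compact locally compact groups (a Baire category argument) it is open, and being a continuous bijection it is a homeomorphism.  Since~\(H_0\) and~\(G_0\) are open subgroups and~\(\tcm\) carries the clopen partition \(H=\bigsqcup_i h_iH_0\) bijectively onto \(G=\bigsqcup_i\tcm(h_i)G_0\), while on each block \(\tcm\) is a translate of the homeomorphism \(\tcm|_{H_0}\), it is a homeomorphism globally; thus \(\tcm\colon H\to G\) is a topological group isomorphism.

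Finally, \(\tcm\) is then a homeomorphism onto the closed subgroup \(\tcm(H)=G\), so~\(\cm\) is group-like.  Applying Example~\ref{exa:cm-equivalence_quotient} with \(N\defeq H\), which is closed, \(\acm(G)\)-invariant, and mapped homeomorphically by~\(\tcm\) onto the closed subgroup~\(G\), exhibits an equivalence from~\(\cm\) to the crossed module on \(G/\tcm(H)=\{1\}\), that is, the trivial crossed module, whose action~\(\acm'\) is trivial.  Equivalently, one could simply note that condition~\ref{the:thin_locally_compact_normalise}.3 holds trivially, since the commutator map \(G\times G\to G\) factors through the isomorphism~\(\tcm\) via the continuous map \((g_1,g_2)\mapsto \tcm^{-1}(g_1)\tcm^{-1}(g_2)\tcm^{-1}(g_1)^{-1}\tcm^{-1}(g_2)^{-1}\), and then invoke the implication \ref{the:thin_locally_compact_normalise}.3\(\Rightarrow\)\ref{the:thin_locally_compact_normalise}.2.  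The main obstacle is the passage from a continuous bijection to a homeomorphism; everything hinges on extracting the open, \(\sigma\)-compact subgroup~\(G_0\) so that the open mapping theorem applies.
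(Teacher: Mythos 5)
Your argument is internally coherent, but it proves the wrong statement because it misreads the hypothesis. You correctly note that \(\tcm(K)\) is compact, hence closed, so \(\cl{\tcm(K)}=\tcm(K)\), and you then read the hypothesis as saying that the \emph{abstract} subgroup \(\langle\tcm(K)\rangle=\tcm(\langle K\rangle)\) is open in \(G\). That reading makes the closure bar in the statement redundant, which should have been a warning sign. The paper's own proof states the intended hypothesis explicitly: there is a compactly generated subgroup \(A\subseteq H\) (namely \(A=\langle K\rangle\)) for which the \emph{closure} \(\cl{\tcm(A)}\) is open in \(G\); that is, \(\cl{\tcm(K)}\) is only required to generate an open subgroup \emph{topologically}. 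Under this intended reading your very first step fails: \(\langle\tcm(K)\rangle\) need not be open (only its closure is), so \(G_0\) is not an open subgroup contained in \(\tcm(H)\), the density argument giving \(\tcm(H)=G\) collapses, and \(\tcm\) is in general far from surjective. Under your literal reading, by contrast, the hypothesis combined with thinness forces \(\tcm\) to be a topological isomorphism — exactly as you show — so the lemma would apply only to crossed modules already equivalent to the trivial one, and never to any thin crossed module with discrete \(H\) and non-discrete \(G\), which are precisely the standard forms of Theorem~\ref{the:thin_Lie_equivalent_Abelian}.

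That your conclusion (equivalence to the trivial crossed module) is too strong is visible on the motivating example: \(\cm'=(\Torus,\Z,\tcm,0)\) with \(\tcm(n)=\exp(2\pi\ima\theta n)\), \(\theta\) irrational, is thin, \(H=\Z\) is compactly generated, and \(\cl{\tcm(\Z)}=\Torus\) is open — this is exactly the situation for which the lemma is invoked in Section~\ref{sec:factorise_crossed_product} (``in particular, this happens if \(H\) itself is compactly generated''). Yet \(\cm'\) is not equivalent to the trivial crossed module: \(\pi_1(\cm')=\coker\tcm\) is a nontrivial invariant of equivalence, and Example~\ref{exa:crossed_Z_T} computes a crossed product by \(\cm'\) that changes the algebra, which would be impossible for the trivial crossed module. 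The paper's actual proof therefore has to do real work: it verifies that the property ``there is a compactly generated \(A\subseteq H\) with \(\cl{\tcm(A)}\) open'' survives each reduction step of Theorem~\ref{the:thin_locally_compact_normalise} (restriction to open subgroups, quotients with compactly generated kernels), arriving at an equivalent crossed module with compact \(G'\), discrete \(H'\), and a finite set \(S\subseteq H'\) whose image topologically generates \(G'\); continuity of \(\acm'\) then yields an open subgroup of \(G'\) centralising \(S\), which is central because \(S\) generates \(G'\) topologically, so \(G'\) has an open centre \(Z\), and restricting to \(G''=Z\) as in Example~\ref{exa:cm-equivalence_enlarge} gives Abelian \(G''\) and hence trivial \(\acm''\). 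Your open-mapping-theorem step is fine as far as it goes, but the global strategy of upgrading \(\tcm\) to an isomorphism cannot be repaired under the intended hypothesis.
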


\begin{proof}
  Call a subgroup of a locally compact group \emph{compactly
    generated} if it is generated by a compact subset.  Our
  assumption is that there is a compactly generated subgroup~\(A\)
  of~\(H\) for which~\(\cl{\tcm(A)}\) is open in~\(G\).  If
  \(G_1\subseteq G\) is an open subgroup, then \(A_1\defeq
  \tcm^{-1}(G_1)\cap A\) has the same property for the crossed
  module~\(\cm_1\) constructed as in
  Example~\ref{exa:cm-equivalence_enlarge}.  If \(\pi\colon G_2\to
  G\) is a quotient mapping for which \(\ker\pi\) is compactly
  generated, then~\(\cm_2\) constructed as in
  Example~\ref{exa:cm-equivalence_quotient} also inherits this
  property, taking the preimage of~\(A\) in~\(A_2\), which is again
  finitely generated.  The coverings needed in the proof of
  Theorem~\ref{the:thin_locally_compact_normalise} have compactly
  generated kernels because we divide out either compact groups,
  connected Lie groups, or discrete subgroups in connected Lie
  groups, which are all compactly generated.  Hence
  Theorem~\ref{the:thin_locally_compact_normalise} provides an
  equivalent crossed module~\(\cm'\) with compact~\(G'\) and
  discrete~\(H'\) and a compactly generated subgroup \(A\subseteq
  H'\) for which \(\cl{\tcm(A)}\) is dense in~\(G'\).  Since~\(H'\)
  is discrete, \(A\) is generated by a finite subset~\(S\).
  Since~\(\acm'\) is continuous, the set of \(g\in G'\) with
  \(gh=hg\) for all \(h\in S\) is open in~\(G'\).  Since~\(S\)
  generates~\(G'\) topologically, this open subgroup is central
  in~\(G'\).  Thus~\(G'\) has an open, finite-index centre~\(Z\).
  Now replace~\(\cm'\) by an equivalent crossed module~\(\cm''\) as
  in Example~\ref{exa:cm-equivalence_enlarge} with \(G''=Z\).  This
  has commutative~\(G''\) and hence trivial~\(\acm''\)
  because~\(\tcm\) is injective.
\end{proof}

\begin{example}
  \label{exa:noncommutative_pione}
  We construct a thin crossed module of locally
  compact groups where~\(G/\tcm(H)\) is not Abelian.  Since
  \(G/\tcm(H)=\coker\tcm\) is invariant under equivalence, this
  crossed module cannot be equivalent to a commutative one.
  Let~\(G_1\) be some finite group that is not Abelian.  Let
  \[
  H \defeq \bigoplus_{n\in\N} G_1,\qquad
  G \defeq \prod_{n\in\N} G_1,
  \]
  where \(\bigoplus G_1\) is the subgroup of all \((g_n)\in \prod
  G_1\) with \(g_n=1\) for all but finitely many entries.
  Hence~\(H\) is a countable group; we give it the discrete
  topology.  The group~\(G\) is pro-finite and in particular
  compact.  It contains~\(H\) as a dense normal subgroup, and the
  conjugation map \(g\mapsto ghg^{-1}\) for any fixed \(h\in H\)
  factors through a finite product and therefore is continuous.  The
  constant embedding \(G_1\to G\) remains an embedding into~\(G/H\).
  Hence \(G/H\) is not Abelian.
\end{example}

Some of our simplifications also work for crossed modules that are not
thin.  We give one such statement:

\begin{proposition}
  \label{pro:cm_connected_Lie}
  Let \(\cm=(G,H,\tcm,\acm)\) be a crossed module of Lie groups with
  connected~\(G\) and injective~\(\tcm\).  Then~\(\cm\) is
  equivalent to a crossed module \(\cm'=(G',H',\tcm',\acm')\)
  where~\(G'\) is simply connected, \(H'\) is discrete,
  and~\(\acm'\) is trivial.  Two crossed modules of this form are
  equivalent if and only if they are isomorphic.
\end{proposition}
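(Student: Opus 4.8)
The plan is to prove existence by re-running the first half of the proof of Theorem~\ref{the:thin_Lie_equivalent_Abelian}, and to prove the classification statement by a variant of its \(\mathfrak{g}/\mathfrak{h}\)-invariance argument. For existence, observe that every use of density in the proof of Theorem~\ref{the:thin_Lie_equivalent_Abelian}, up to the point where the equivalent crossed module \(\cm_3=(G_3,H_3,\tcm_3,\acm_3)\) with simply connected \(G_3\), discrete \(H_3\) and trivial \(\acm_3\) is produced, serves only to reduce to a connected group \(G\)---which we now assume outright, so that restricting to the identity component via Example~\ref{exa:cm-equivalence_enlarge} is vacuous. I would then pass to the universal covering \(\varphi\colon G_2\to G\) and pull back \(H\) to \(H_2=\{(h,g_2):\tcm(h)=\varphi(g_2)\}\) as in Example~\ref{exa:cm-equivalence_quotient}; injectivity of \(\tcm\) is inherited by \(\tcm_2\), and since a Lie group homomorphism with trivial kernel has injective differential, \(\mathrm{d}\tcm_2\colon\mathfrak{h}\to\mathfrak{g}\) is injective. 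The crossed-module relation \(\tcm(\acm_g(h))=g\tcm(h)g^{-1}\) together with connectedness of \(G\) forces \(\mathrm{d}\tcm_2(\mathfrak{h})\) to be an ideal, exactly as in the cited proof, and density is not used here. Hence, in the simply connected \(G_2\), this ideal integrates to a closed connected normal subgroup onto which \(\tcm_2\) maps \(H_2^0\) homeomorphically, and dividing it out yields \(\cm_3\). Finally \(\acm_3\) is trivial because a connected group acts trivially on the discrete space \(H_3\); unlike in Theorem~\ref{the:thin_Lie_equivalent_Abelian}, we neither can nor need to conclude that \(G_3\) is Abelian, so we stop here and set \(\cm'\defeq\cm_3\).

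For the classification, note first that a crossed module \(\cm'\) of the stated form is completely determined, up to isomorphism, by the pair consisting of the simply connected Lie group \(G'\) and the central subgroup \(\tcm'(H')\subseteq G'\): triviality of \(\acm'\) forces \(\tcm'(H')\) to be central, injectivity of \(\tcm'\) identifies \(H'\cong\tcm'(H')\), and discreteness fixes the topology. So it suffices to recover this pair from the equivalence class. The group \(G'\) is recovered because the quotient Lie algebra \(\mathfrak{g}/\mathfrak{h}\) is invariant under equivalence---this is the argument in Theorem~\ref{the:thin_Lie_equivalent_Abelian}, which only uses Lemma~\ref{lem:cm_equivalence_criterion} and is insensitive to density---and here \(\mathfrak{h}=0\), so the invariant is \(\mathfrak{g}'\) itself, which determines the simply connected \(G'\). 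An equivalence thus supplies a Lie algebra isomorphism \(\phi\colon\mathfrak{g}'\to\mathfrak{g}''\), which integrates to a Lie group isomorphism \(\tilde\phi\colon G'\to G''\).

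To recover the subgroup, I would show that the topological-group isomorphism \(F\colon\pi_1(\cm')\to\pi_1(\cm'')\) coming from the same equivalence is compatible with the quotient maps \(q\colon G\onto\pi_1(\cm)\) through \(\tilde\phi\), that is, \(F\circ q'=q''\circ\tilde\phi\). Granting this, \(\tilde\phi(\tcm'(H'))=\tilde\phi(\ker q')=\ker q''=\tcm''(H'')\), and \(\tilde\phi\) together with the induced map on the \(H\)'s is the desired crossed-module isomorphism; the converse implication is trivial. Since \(\exp_{G'}(\mathfrak{g}')\) generates the connected group \(G'\), the equality \(F\circ q'=q''\circ\tilde\phi\) of homomorphisms reduces to its restriction along \(\exp_{G'}\), that is, to the exponential compatibility \(F\circ(q'\circ\exp_{G'})=(q''\circ\exp_{G''})\circ\phi\).

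Establishing this last relation through a zigzag of elementary equivalences is the technical heart and the step I expect to be the main obstacle, because the intermediate groups \(\pi_1(\cm_i)\) are not Lie groups and the subgroups \(\tcm_i(H_i)\) need not be closed. The key lemma I would prove is that \(q_i\circ\exp_{G_i}\) factors through \(\mathfrak{g}_i/\mathfrak{h}_i\): for \(Y\) in the ideal \(\mathfrak{h}_i\) the left-logarithmic derivative of \(s\mapsto\exp_{G_i}(X+sY)\) lies in \(\mathfrak{h}_i\), so this path stays in a single leaf \(\exp_{G_i}(X)\cdot\tcm_i(H_i^0)\) of the left-invariant foliation integrating \(\mathfrak{h}_i\)---an argument needing only that \(\mathfrak{h}_i\) is an ideal and \(\tcm_i(H_i^0)\) its integral subgroup, not that the latter is closed. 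Naturality of \(\exp\) then makes each elementary equivalence compatible with the descended exponential map \(\mathfrak{g}_i/\mathfrak{h}_i\to\pi_1(\cm_i)\), and composing around the zigzag gives the required relation. This is the precise analogue, in the possibly non-Abelian and non-closed setting, of the invariance of the subset \(T(\cm)\) used in the proof of Theorem~\ref{the:thin_Lie_equivalent_Abelian}.
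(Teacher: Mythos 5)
Your proposal is correct in substance, and it splits naturally: the existence half is the paper's own argument, while the uniqueness half takes a genuinely different (and arguably more robust) route. For existence, the paper likewise passes to the universal cover, divides out the identity component of \(H\) after showing its image is closed, and derives triviality of \(\acm'\) from the continuity of an action of a connected group on a discrete space; your observation that the \(\Ad\)-invariance of \(\mathrm{d}\tcm(\mathfrak{h})\), hence the ideal property, follows from connectedness of \(G\) rather than from density of \(\tcm(H)\) is exactly the point the paper leaves implicit behind its ``as in the proof of Theorem~\ref{the:thin_Lie_equivalent_Abelian}'' references. For uniqueness, the paper also begins with the invariance of \(\mathfrak{g}/\mathfrak{h}\) and integrates the resulting Lie algebra isomorphism to \(\tilde\phi\colon G'\to G''\), but it then recovers the central subgroup by claiming that \(\exp\) maps the invariant subset \(T(\cm_i)\) \emph{onto} \(\tcm_i(H_i)\), justified by the assertion that the centre of a simply connected Lie group is \(\R^n\) --- which is false in general (the centre of \(\mathrm{SU}(2)\) is \(\Z/2\)), so the paper's route has a soft spot precisely where you avoid one: your kernel argument \(F\circ q'=q''\circ\tilde\phi\), hence \(\tilde\phi(\tcm'(H'))=\ker(q''\circ\tilde\phi)=\tcm''(H'')\), needs only that \(\exp(\mathfrak{g}')\) generates \(G'\), not that every element of \(\tcm'(H')\) is an exponential. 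Moreover, your foliation lemma (the left-logarithmic derivative of \(s\mapsto\exp(X+sY)\) lies in the ideal \(\mathfrak{h}\), so the path stays in the leaf \(\exp(X)\cdot\tcm(H^0)\)) is correct and in fact supplies the proof of the representative-independence that the paper's definition of \(T(\cm)\) simply asserts without argument. The one caveat, which you partially flag yourself: Definition~\ref{def:cm_equivalent} permits zigzags through arbitrary crossed modules of \emph{locally compact} groups, and your descended exponential --- exactly like the paper's \(T(\cm)\) --- is only defined at Lie nodes; your lemma copes with non-closed \(\tcm_i(H_i)\) and non-Hausdorff \(\pi_1(\cm_i)\), but neither you nor the paper addresses possible non-Lie intermediate crossed modules in the chain, so this is a shared tacit restriction rather than a defect specific to your proposal. (Note also that \(\pi_1(\cm)\) as a topological group alone could not suffice here --- for dense \(\tcm(H)\) it carries the indiscrete topology --- which is why both proofs must track the finer exponential/\(T(\cm)\)-data through the zigzag.)
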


\begin{proof}
  As in the proof of Theorem~\ref{the:equivalence_for_actions}, we
  may pass to an equivalent crossed module~\(\cm_1\) where~\(G_1\)
  is the universal covering of~\(G\); in this new crossed module,
  the image of the connected component of~\(H_1\) is closed.
  Since~\(\tcm\) is assumed injective, we may divide out this
  connected component and arrive at an equivalent crossed module
  with discrete~\(H_2\) and simply connected~\(G_2\).  As in the
  proof of Theorem~\ref{the:equivalence_for_actions}, it follows
  that the conjugation action on~\(H_2\) is trivial.
  Hence~\(\cm_2\) has the asserted properties.

  Now consider the invariant \((\Lag/\Lah,T(\cm))\)
  under equivalence used already in the proof of
  Theorem~\ref{the:equivalence_for_actions}.  If \(\cm_1\)
  and~\(\cm_2\) are two crossed modules with discrete~\(H\), simply
  connected~\(G\) and injective~\(\tcm\), then an isomorphism
  \(\Lag_1/\Lah_1\to\Lag_2/\Lah_2\)
  is an isomorphism \(\Lag_1\to\Lag_2\) because
  \(H_1\) and~\(H_2\) are discrete.  This lifts to an isomorphism
  \(G_1\to G_2\) because \(G_1\) and~\(G_2\) are simply connected.
  Furthermore, we claim that the exponential maps
  \(\Lag_i\to G_i\) map \(T(\cm_i)\) onto~\(\tcm(H_i)\).
  This is because \(\tcm(H_i)\) is contained in the centre
  of~\(G_i\), the centre of~\(G_i\) is isomorphic to~\(\R^n\) for
  some~\(n\) because~\(G_i\) is simply connected, and the
  exponential map for~\(\R^n\) is surjective.  Therefore, an
  isomorphism between the invariants
  \((\Lag_1/\Lah_1,T(\cm_1))\) and
  \((\Lag_2/\Lah_2,T(\cm_2))\) already implies that
  \(\cm_1\) and~\(\cm_2\) are isomorphic crossed modules.
\end{proof}

\section{Duality for Abelian crossed modules}
\label{sec:crossed_acm_trivial}

\begin{definition}
  \label{def:two-Abelian}
  We call a crossed module \((G,H,\tcm,\acm)\) \emph{\(2\)\nb-Abelian} if
  the action~\(\acm\) is trivial, and \emph{Abelian} if~\(\acm\) is
  trivial and~\(G\) is Abelian.
\end{definition}

For a \(2\)\nb-Abelian crossed module, \(H\) is Abelian because
\(hkh^{-1}=\acm_{\tcm(h)}(k)\) for all \(h,k\in H\), and \(\tcm(H)\)
is a central subgroup of~\(G\) because
\(\tcm(\acm_g(h))=g\tcm(h)g^{-1}\).  For a thin crossed module,
\(H\)~is Abelian if and only if~\(\acm\) is trivial, if and only
if~\(G\) is Abelian. Hence, for thin crossed modules, Abelianness
and \(2\)\nb-Abelianness are equivalent conditions.  Of course, in
general a \(2\)\nb-Abelian crossed module need not be Abelian (for
instance, any group~\(G\) viewed as a crossed module \((G,0,0,0)\)
is \(2\)\nb-Abelian).  Proposition~\ref{pro:cm_connected_Lie}
implies that any crossed module of Lie groups with connected~\(G\)
and injective~\(\tcm\) is equivalent to a \(2\)\nb-Abelian one.

Let \(\cm=(G,H,\tcm,\acm)\) be a \(2\)\nb-Abelian crossed module of
locally compact groups.  Let \((\A,\u)\) be a Fell bundle
over~\(\cm\).  Condition~\ref{def:FellBundle2Group2} in
Definition~\ref{def:FellBundle2Group} now says that \(a\cdot
\u_h=\u_h\cdot a\) for all \(a\in\A\), \(h\in H\).  This extends to
\(a\in\contc(\A)\) and then to \(a\in\Cst(\A)\); here we
embed~\(\u_h\) into the multiplier algebra of~\(\Cst(\A)\) using the
universal representation~\(\rho^\un\).  Thus~\(\rho^\un\) now
maps~\(H\) to the centre of \(\Mult(\Cst(\A))\).  This map
integrates to a nondegenerate \Star{}homomorphism
\begin{equation}
  \label{eq:hatH_structure}
  {\smallint}\rho^\un\colon \Cst(H)\to Z\Mult(\Cst(\A)).
\end{equation}
Identifying \(\Cst(H)\) with \(\Cont_0(\hat{H})\) for the Pontryagin
dual~\(\hat{H}\) of~\(H\), we get a structure of
\(\Cont_0(\hat{H})\)\nb-algebra on~\(\Cst(\A)\).  (We normalise the
Fourier transform so that the unitary
\(\delta_h\in\U\Mult(\Cst(H))\) becomes the function
\(\hat{h}\mapsto \hat{h}(h)\) on~\(\hat{H}\).)

\begin{proposition}
  \label{pro:crossed_acm_trivial}
  The crossed product~\(\Cst(\A,\u)\) is the fibre at \(1\in\hat{H}\)
  for the \(\Cont_0(\hat{H})\)-\(\Cst\)-algebra structure
  on~\(\Cst(\A)\) just described.
\end{proposition}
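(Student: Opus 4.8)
The plan is to identify $\I_\u$ with the ideal by which one quotients $\Cst(\A)$ to form the fibre at $1 \in \hat{H}$. Recall that for a $\Cont_0(X)$-algebra $D$ the fibre at $x \in X$ is $D/\overline{C_x \cdot D}$, where $C_x = \{f \in \Cont_0(X) : f(x) = 0\}$. Writing $\phi \defeq {\smallint}\rho^\un \colon \Cont_0(\hat{H}) \to Z\Mult(\Cst(\A))$ for the nondegenerate structure map of~\eqref{eq:hatH_structure}, the claim is therefore that
\[
\I_\u = \overline{\phi(C_1) \cdot \Cst(\A)}, \qquad C_1 = \{f \in \Cont_0(\hat{H}) : f(1) = 0\}.
\]

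The heart of the argument is a purely commutative computation in $\Cont_0(\hat{H})$. Under the normalised Fourier transform the unitary multiplier $\delta_h \in \U\Mult(\Cst(H))$ becomes the character $e_h \colon \chi \mapsto \chi(h)$ of $\hat{H}$, so that $\rho^\un(\u_h) = \phi(e_h)$ once $\phi$ is extended to multipliers. The multiplier $e_h - 1$ vanishes at $\chi \in \hat{H}$ precisely when $\chi(h) = 1$, so the common zero set of the family $\{e_h - 1 : h \in H\}$ is $\{\chi \in \hat{H} : \chi(h) = 1 \text{ for all } h \in H\} = \{1\}$ by Pontryagin duality $\widehat{\hat{H}} \cong H$. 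Via the bijection between closed ideals of $\Cont_0(\hat{H})$ and open subsets of $\hat{H}$, this shows that the closed ideal $\overline{\sum_{h \in H} (e_h - 1)\Cont_0(\hat{H})}$ is exactly $C_1$.

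It remains to transfer this to $\Cst(\A)$. Since $\acm$ is trivial, $\phi(e_h) - 1 = \rho^\un(\u_h) - 1$ is central, and since products $xy$ are dense in $\Cst(\A)$ one may rewrite $\I_\u = \cspn\{(\rho^\un(\u_h) - 1)z : h \in H,\ z \in \Cst(\A)\}$, the closed ideal generated by these central elements. For the inclusion $\I_\u \subseteq \overline{\phi(C_1)\Cst(\A)}$ I would fix an approximate unit $(g_i)$ of $\Cont_0(\hat{H})$ and use nondegeneracy to write $(\rho^\un(\u_h) - 1)z = \lim_i \phi((e_h - 1)g_i)z$, noting $(e_h - 1)g_i \in C_1$. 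For the reverse inclusion I would invoke the previous paragraph to approximate any $f \in C_1$ by finite sums $\sum_k (e_{h_k} - 1)g_k$ with $g_k \in \Cont_0(\hat{H})$; applying $\phi$ and multiplying by $a \in \Cst(\A)$ then writes $\phi(f)a$ as a limit of sums $\sum_k (\rho^\un(\u_{h_k}) - 1)\phi(g_k)a \in \I_\u$. Combining the two inclusions gives $\I_\u = \overline{\phi(C_1)\Cst(\A)}$, whence $\Cst(\A,\u) = \Cst(\A)/\I_\u$ is the fibre at $1$. The only conceptually delicate step is the identification of $C_1$ as the ideal generated by $\{e_h - 1\}$, which rests on Pontryagin duality to compute the common zero set; the remaining transfer is a routine approximate-unit argument, the one point requiring care being the bookkeeping between $\Cont_0(\hat{H})$ and its multiplier algebra when handling the non-$\Cont_0$ functions $e_h$.
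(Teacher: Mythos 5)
Your proposal is correct and follows essentially the same route as the paper: the paper's (three-line) proof likewise identifies the quotient by the relations \(\rho^\un(\u_h)=1\) with the quotient by \(\Cont_0(\hat{H}\setminus\{1\})\cdot\Cst(\A)\) under the isomorphism \(\Cst(H)\cong\Cont_0(\hat{H})\), and you have simply filled in the hull--kernel identification of the ideal generated by \(\{e_h-1\}\) and the approximate-unit bookkeeping that the paper leaves implicit. (One minor remark: the computation of the common zero set needs no Pontryagin duality --- that \(\chi(h)=1\) for all \(h\in H\) forces \(\chi=1\) is just the definition of the trivial character.)
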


\begin{proof}
  By construction, \(\Cst(\A,\u)\) is the quotient by the relation
  \(\rho^\un(\u_h)=1\) for all \(h\in H\).  This is equivalent to
  \({\smallint}\rho^\un(f)= f(1_{\hat{H}})\) for all
  \(f\in\Cont_0(\hat{H})\cong \Cst(H)\).  Hence \(\Cst(\A,\u)\) is the
  quotient by \(\Cont_0(\hat{H}\setminus \{1\})\cdot\Cst(\A)\), that
  is, the fibre at~\(1\).
\end{proof}

The above proposition is merely an observation, the main point is to
see that \(\Cst(\A)\) has a relevant \(\Cont_0(\hat{H})\)-algebra
structure.  This proposition allows us to compute crossed products by
\(2\)\nb-Abelian crossed modules in two more elementary steps: first take
the crossed product~\(\Cst(\A)\) by the locally compact group~\(G\);
then take the fibre at \(1\in\hat{H}\) for the canonical
\(\Cont_0(\hat{H})\)-algebra structure on~\(\Cst(\A)\).

\begin{example}
  \label{exa:crossed_Z_T}
  Let \(\theta\in\R\setminus\Q\) and let~\(\Rotc{\theta}\) be the
  associated noncommutative torus.  It carries a strict action by the
  Abelian crossed module \(\tcm\colon \Z\to\Torus\) with
  \(\tcm(n)\defeq \exp(2\pi \ima \theta n)\); namely, \(\Torus\) acts by part
  of the gauge action: \(\alpha_z(U)\defeq zU\), \(\alpha_z(V)\defeq
  V\), where \(U\) and~\(V\) are the standard generators
  of~\(\Rotc{\theta}\), and~\(\Z\) acts by \(n\mapsto V^{-n}\)
  (see~\cite{Buss-Meyer-Zhu:Non-Hausdorff_symmetries}).  The crossed
  product for this action is already computed
  in~\cite{Buss-Meyer-Zhu:Non-Hausdorff_symmetries}: it is the
  \(\Cst\)\nb-algebra of compact operators on the Hilbert
  space~\(L^2(\Torus)\).  This also follows from
  Proposition~\ref{pro:crossed_acm_trivial}.  First, the crossed
  product \(\Rotc{\theta}\rtimes\Torus\) turns out to be
  \(\Cont(\Torus)\otimes \Comp(L^2\Torus)\) because the
  \(\Torus\)\nb-action on~\(\Rotc{\theta}\) is a dual action if we
  interpret \(\Rotc{\theta}\) as \(\Cont(\Torus)\rtimes\Z\) with
  \(\Cont(\Torus)=\Cst(V^*)\) and~\(U\) generating~\(\Z\).  The map
  \(\Cst(\Z)\to \Rotc{\theta}\rtimes\Torus\) becomes an isomorphism
  onto \(\Cont(\Torus)\otimes 1\).  Hence the fibre at
  \(1\in\hat{\Z}=\Torus\) gives \(\Comp(L^2\Torus)\) as expected.
\end{example}

Next we generalise Takesaki--Takai duality from Abelian groups to
Abelian crossed modules. We first reformulate the classical statement
in our setting of correspondence \(2\)\nb-categories.

Let~\(G\) be an Abelian locally compact group and let~\(\hat{G}\) be
its Pontryagin dual.  A crossed product for a \(G\)\nb-action carries
a canonical dual action of~\(\hat{G}\) and a crossed product for a
\(\hat{G}\)\nb-action carries a canonical dual action of~\(G\).  These
constructions extend to map \(G\)\nb-equivariant correspondences to
\(\hat{G}\)\nb-equivariant correspondences, and vice versa, such that
equivariant isomorphism of correspondences is preserved.  That is,
they provide functors \(\Corr(G)\leftrightarrow \Corr(\hat{G})\).
Takesaki--Takai duality implies that these functors are equivalences
of \(2\)\nb-categories inverse to each other.  Going back and forth gives
equivariant isomorphisms \(A\rtimes G\rtimes\hat{G} \cong A\otimes
\Comp(L^2G)\) and \(A\rtimes \hat{G}\rtimes G \cong A\otimes
\Comp(L^2\hat{G})\), respectively; we interpret these as equivariant
Morita--Rieffel equivalences \(A\rtimes \hat{G}\rtimes G \simeq A\)
and \(A\rtimes G\rtimes\hat{G} \simeq A\) via \(A\otimes L^2G\) and
\(A\otimes L^2\hat{G}\), respectively.  These equivariant
Morita--Rieffel equivalences are natural with respect to equivariant
correspondences, hence the functors we get by composing the crossed
product functors are naturally equivalent to the identity functors.

Now let \(\cm=(G,H,\tcm,0)\) be an Abelian crossed module of locally
compact groups; this is nothing but a continuous homomorphism between
two locally compact Abelian groups.  Its \emph{dual crossed
  module}~\(\hat{\cm}\) consists of the dual groups \(\hat{G}\)
and~\(\hat{H}\) and the transpose \(\hat{\tcm}\colon \hat{H}\to\hat{G}\)
of~\(\tcm\).  The bidual of~\(\cm\) is naturally isomorphic to~\(\cm\)
because this holds for locally compact groups.  Our duality theorem
does not compare the action \(2\)\nb-categories of the crossed modules
\(\cm\) and~\(\hat{\cm}\), however; instead, the action \(2\)\nb-category
of the arrow groupoid \(\hat{H}\rtimes\hat{G}\) of~\(\hat{\cm}\)
appears.  And the functors going back and forth are the crossed
product functors for the groups \(G\) and~\(\hat{G}\).

We now describe the equivalence of \(2\)\nb-categories \(\Corr(\cm)
\simeq \Corr(\hat{H}\rtimes\hat{G})\).  Let \((\A,\u)\) be a
saturated Fell bundle over~\(\cm\), encoding a \(\cm\)\nb-action by
correspondences.  The cross-sectional
\(\Cst\)\nb-algebra~\(\Cst(\A)\) carries a dual action
of~\(\hat{G}\), where a character acts by pointwise multiplication
on sections.  Since Abelian crossed modules are \(2\)\nb-Abelian,
\eqref{eq:hatH_structure} provides a nondegenerate
\Star{}homomorphism
\[
{\smallint}\rho^\un\colon
\Cont_0(\hat{H})\cong \Cst(H)\to Z\Mult(\Cst\A).
\]
Since~\(\upsilon_h\) is a multiplier of degree~\(\tcm(h)\), the dual
action~\(\alpha\) of~\(\hat{G}\) acts on \(\delta_h\in\Cst(H)\) by
\(\alpha_{\hat{g}}(\delta_h) = \hat{g}(\tcm h)\cdot \delta_h\).
This defines an action of~\(\hat{G}\) on \(\Cst(H)\), which
corresponds to the action \(\rho_{\hat{g}}f(\hat{h}) =
f(\hat{h}\cdot\hat{\tcm}(\hat{g}))\) on \(\Cont_0(\hat{H})\)
coming from the right translation action of~\(\hat{G}\)
on~\(\hat{H}\) through the homomorphism \(\hat{\tcm}\colon
\hat{G}\to\hat{H}\).  Thus \(\Cst(\A)\) carries a strict, continuous
action of the arrow groupoid \(\hat{H}\rtimes\hat{G}\)
of~\(\hat{\cm}\).

The actions of \(\hat{H}\rtimes\hat{G}\) form a \(2\)\nb-category
\(\Corr(\hat{H}\rtimes\hat{G})\) with actions by correspondences as
objects, equivariant correspondences as arrows and isomorphisms of
equivariant correspondences as \(2\)\nb-arrows.  We claim that the
construction on the level of objects above is part of a functor
\(\Corr(\cm)\to\Corr(\hat{H}\rtimes\hat{G})\).

Let~\(\Hilm\) be a correspondence of Fell bundles
\((\A,\u^\A)\to(\B,\u^\B)\).  The space \(\contc(\Hilm)\) of compactly
supported, continuous sections of~\(\Hilm\) is a pre-Hilbert module
over the \Star{}algebra \(\contc(\B)\) with a nondegenerate left
action of \(\contc(\A)\).  Using the embedding
\(\contc(\B)\subseteq\Cst(\B)\), we may complete \(\contc(\Hilm)\) to
a Hilbert \(\Cst(\B)\)-module \(\Cst(\Hilm)\).  The left action of
\(\contc(\A)\) on \(\contc(\Hilm)\) induces a left action of
\(\Cst(\A)\) on \(\Cst(\Hilm)\), turning it into a correspondence
\(\Cst(\A)\to\Cst(\B)\).  Isomorphic correspondences of Fell bundles
clearly give isomorphic correspondences \(\Cst(\A)\to\Cst(\B)\).  Thus
taking cross-sectional \(\Cst\)\nb-algebras gives a functor
\(\Corr(\cm)\to\Corr\) (where \(\Corr=\Corr(\{1\})\) denotes the
\(2\)\nb-category of \cstar{}algebras with correspondences as their
morphisms); actually, since we did not use the \(H\)\nb-action, this
is just the crossed product functor \(\Corr(G)\to\Corr\), composed
with the forgetful functor \(\Corr(\cm)\to\Corr(G)\).

Pointwise multiplication by characters defines a
\(\hat{G}\)\nb-action on \(\contc(\Hilm)\).  This extends to the
completion \(\Cst(\Hilm)\), and turns it into a
\(\hat{G}\)\nb-equivariant correspondence \(\Cst(\A)\to\Cst(\B)\).
This \(\hat{G}\)\nb-action is natural for isomorphisms of Fell
bundle correspondences.  Furthermore, \(\Cst(\Hilm)\) is a
\(\Cst(H)\)-linear correspondence because \(\u^\A_h\cdot \xi =
\xi\cdot \u^\B_h\) for all \(h=\acm_g(h)\in H\) by
\ref{def:corr_Fell_bundle}.\ref{def:corr_Fell_bundle_central}.  Thus
taking the cross-sectional \(\Cst\)\nb-algebra (that is, the crossed product
by~\(G\)) gives a functor
\(\Corr(\cm)\to\Corr(\hat{H}\rtimes\hat{G})\).

\begin{theorem}
  \label{the:Abelian_cm_groupoid}
  Let~\(\cm\) be an Abelian crossed module.  The functor
  \(\Corr(\cm)\to\Corr(\hat{H}\rtimes\hat{G})\) just described is an
  equivalence of \(2\)\nb-categories.
\end{theorem}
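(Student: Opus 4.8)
The plan is to reduce the statement to the classical Takesaki--Takai equivalence \(\Corr(G)\simeq\Corr(\hat G)\) recalled above. The key point is that the extra datum making a \(G\)\nb-action into a \(\cm\)\nb-action---the central family~\(\u\), viewed through~\eqref{eq:hatH_structure} as a homomorphism \(\Cst(H)\to Z\Mult(\Cst(\A))\)---and the extra datum making a \(\hat G\)\nb-action into a \(\hat G\ltimes\hat H\)\nb-action---the \(\Cont_0(\hat H)\)\nb-algebra structure---are two incarnations of the same thing under the Fourier isomorphism \(\Cst(H)\cong\Cont_0(\hat H)\), and are transported into one another by the crossed products by \(G\) and~\(\hat G\). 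Accordingly, I would first construct a candidate quasi-inverse \(\Corr(\hat G\ltimes\hat H)\to\Corr(\cm)\). It sends a \(\hat G\ltimes\hat H\)\nb-action, that is, a \(\Cst\)\nb-algebra~\(B\) with a \(\hat G\)\nb-action~\(\gamma\) and a \(\hat G\)\nb-equivariant nondegenerate homomorphism \(\iota\colon\Cont_0(\hat H)\cong\Cst(H)\to Z\Mult(B)\), to the crossed product \(B\rtimes_\gamma\hat G\) endowed with the bidual \(G\)\nb-action and with the unitary multipliers \(\u_h\defeq\iota(\delta_h)\) for \(h\in H\).

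The first thing to check is that this yields a Fell bundle over~\(\cm\) in the sense of Definition~\ref{def:FellBundle2Group}. Realising the bidual \(G\)\nb-action on \(B\rtimes_\gamma\hat G\) as a semidirect-product Fell bundle as in Example~\ref{exa:StrictToFellBundles}, I would verify that each \(\u_h\) is fixed by the bidual \(G\)\nb-action (it lies in the copy of~\(B\)) and that conjugation by~\(\u_h\) implements the bidual automorphism indexed by~\(\tcm(h)\) (up to an adjoint), as required in Example~\ref{exa:StrictToFellBundles}; the latter uses precisely the equivariance \(\gamma_{\hat g}(\iota(\delta_h))=\hat g(\tcm(h))\,\iota(\delta_h)\) encoded in the \(\hat G\ltimes\hat H\)\nb-action. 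Thus \(\u_h\) defines a unitary multiplier of order~\(\tcm(h)\), and \(h\mapsto\u_h\) is a strictly continuous homomorphism into the unitary multipliers of this Fell bundle because \(\iota\) is a nondegenerate homomorphism. Condition~\ref{def:FellBundle2Group2} for trivial~\(\acm\), namely \(a\cdot\u_h=\u_h\cdot a\), then holds because \(\tcm(h)\) is central in~\(G\). The same recipe---crossing with~\(\hat G\)---turns \(\hat G\ltimes\hat H\)\nb-equivariant correspondences and their isomorphisms into \(\cm\)\nb-equivariant Fell bundle correspondences and their isomorphisms, giving the functor on arrows and \(2\)\nb-arrows.

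Next I would produce the natural isomorphisms between the two composites and the identity functors. The composite \(\Corr(\cm)\to\Corr(\hat G\ltimes\hat H)\to\Corr(\cm)\) carries a Fell bundle with unit fibre~\(A\) to \((A\rtimes G)\rtimes\hat G\), and the classical Takesaki--Takai Morita--Rieffel equivalence \((A\rtimes G)\rtimes\hat G\simeq A\otimes\Comp(L^2G)\) recalled above supplies the required invertible \(2\)\nb-arrow; the only addition is that this equivalence matches the reconstructed \(\u\)\nb-structure with the original one, which is automatic since \(\u\) is central and is carried along unchanged by the \(L^2G\)\nb-bimodule. The reverse composite is treated symmetrically through \(B\otimes\Comp(L^2\hat G)\simeq B\), where one checks that the \(\Cont_0(\hat H)\)\nb-structure survives crossing with~\(\hat G\) and back; this is immediate because \(\iota\) takes values in the centre and is untouched by the dual-action machinery. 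Naturality of both families of \(2\)\nb-arrows is inherited from naturality in the classical duality.

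Assembling these data gives the asserted equivalence of \(2\)\nb-categories: essential surjectivity on objects and the equivalences \(\Corr_\cm(x_1,x_2)\to\Corr_{\hat G\ltimes\hat H}(F x_1,F x_2)\) on the arrow groupoids both follow from the corresponding statements for the classical equivalence \(\Corr(G)\simeq\Corr(\hat G)\), together with the bijective, natural dictionary between the \(\u\)\nb-data and the \(\Cont_0(\hat H)\)\nb-structure established above. I expect the main obstacle to be exactly the arrow-level part of this dictionary: one must verify that \(\Cont_0(\hat H)\)\nb-linearity of an equivariant correspondence \(\Cst(\A)\to\Cst(\B)\) corresponds precisely to the centrality condition~\ref{def:corr_Fell_bundle_central}, which for trivial~\(\acm\) reads \(\u_h^\A\cdot\xi=\xi\cdot\u_h^\B\), on the underlying Fell bundle correspondence, and that this matching is preserved in both directions under crossing with \(G\) and~\(\hat G\). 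Once this compatibility is in place, the remainder of the proof is a transcription of classical Takesaki--Takai duality.
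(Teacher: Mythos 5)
Your overall architecture coincides with the paper's: reduce to strict actions via the stabilisation trick (\cite{Buss-Meyer-Zhu:Higher_twisted}*{Theorem 5.3}), build the quasi-inverse by crossing with \(\hat{G}\) and converting the \(\Cont_0(\hat{H})\)-structure into unitaries indexed by~\(H\), and verify the two composites using the Takesaki--Takai bimodules \(A\otimes L^2G\) and \(B\otimes L^2\hat{G}\). But there is a genuine gap precisely at the step you declare automatic. The forward functor does \emph{not} transport \(u_h\) unchanged: by Example~\ref{exa:StrictToFellBundles}, the \(H\)\nb-datum of a strict \(\cm\)\nb-action enters \(A\rtimes G\) as \(h\mapsto u_h^*\delta_{\tcm(h)}\), and it is this \emph{product}, not \(u_h\), that is central in \(\Mult(A\rtimes G)\). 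Indeed \(\Ad(u_h)=\alpha_{\tcm(h)}\) is in general a nontrivial automorphism of~\(A\), so your justification ``automatic since \(\u\) is central and is carried along unchanged by the \(L^2G\)\nb-bimodule'' is false for the original unitaries. Consequently, the reconstructed unitary on \((A\rtimes G)\rtimes\hat{G}\) is \((u_h^*\delta_{\tcm(h)})^{-1}=\delta_{\tcm(h)^{-1}}u_h\), which under the Takesaki--Takai isomorphism acts on \(A\otimes L^2G\) as \(u_h\otimes\lambda_{\tcm(h)}\): it mixes the original unitary with a translation part. That \(A\otimes L^2G\) with the \(G\)\nb-action \(g\mapsto\alpha_g\otimes\lambda_g\) is nevertheless \(\cm\)\nb-equivariant is a computation that uses both that \(G\) is Abelian and that \(\alpha_g(u_h)=u_h\); this is exactly the verification the paper carries out, and it is the mathematical heart of the composite check, not a formality.

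The same issue recurs symmetrically in the other composite: the reconstructed map sends \(h\) to \(\phi(v_h^*)\delta_{\tcm(h)}\), and the translation factor \(\delta_{\tcm(h)}\) must be cancelled against the \(\hat{G}\)\nb-equivariance twist \(\beta_{\hat{g}}(\phi(v_h^*))=\hat{g}(\tcm(h))\cdot\phi(v_h^*)\) when acting on \(B\otimes L^2\hat{G}\). So ``\(\iota\) is untouched by the dual-action machinery'' is precisely the statement to be proved, not an input. A smaller bookkeeping point of the same origin: with the paper's Fourier normalisation \(\delta_h\leftrightarrow(\hat{h}\mapsto\hat{h}(h))\) and its dual-action conventions, your choice \(\u_h\defeq\iota(\delta_h)\) yields \(\Ad(\u_h)=\hat{\beta}_{\tcm(h)}^{-1}\) rather than \(\hat{\beta}_{\tcm(h)}\); one must take \(\iota(\delta_h)^*\) (the paper's \(\phi(v_h)\) with \(v_h(\hat{h})=\hat{h}(h)^{-1}\)), and since \(h\mapsto\u_h\) has to be a homomorphism satisfying the crossed-module axiom, ``up to an adjoint'' cannot be left unresolved. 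By contrast, the arrow-level dictionary you single out as the main obstacle --- matching \(\Cont_0(\hat{H})\)\nb-linearity with condition~\ref{def:corr_Fell_bundle_central} of Definition~\ref{def:corr_Fell_bundle} --- is the easier part and is established in the paper already in the construction of the functor, before the theorem.
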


\begin{proof}
  To prove the existence of a quasi-inverse functor
  \(\Corr(\hat{H}\rtimes\hat{G})\to\Corr(\cm)\), it suffices to
  construct it for strict actions by automorphisms because arbitrary
  actions by correspondences are equivalent to strict actions by
  automorphisms (see \cite{Buss-Meyer-Zhu:Higher_twisted}*{Theorem
    5.3}), where ``equivalent'' means ``isomorphic in the
  \(2\)\nb-category \(\Corr(\hat{H}\rtimes\hat{G})\).''

  Thus let~\(B\) be a \(\Cst\)\nb-algebra with a continuous action
  of \(\hat{H}\rtimes\hat{G}\) in the usual sense.  This consists of
  a strict action~\(\beta\) of the group~\(\hat{G}\) and a
  \(\hat{G}\)\nb-equivariant nondegenerate \Star{}homomorphism from
  \(\Cont_0(\hat{H})\) to~\(Z\Mult(B)\).  The crossed product
  \(B\rtimes\hat{G}\) carries a dual action~\(\hat{\beta}\)
  of~\(G\).  For \(h\in H\), define
  \(v_h\in\U\Mult(\Cont_0(\hat{H}))\) by \(v_h(\hat{h})=
  \hat{h}(h)^{-1}\) for all \(\hat{h}\in\hat{H}\).  The right
  translation action of~\(\hat{G}\) acts on~\(v_h\) by
  \(\hat{g}(v_h)= \hat{g}(\tcm(h))^{-1}\cdot v_h\) because
  \[
  \hat{g}(v_h)(\hat{h})
  = v_h(\hat{h}\cdot \hat{\tcm}\hat{g})
  = (\hat{\tcm}\hat{g})(h)^{-1}\hat{h}(h)^{-1}
  = \hat{g}(\tcm h)^{-1} v_h(\hat{h})
  \]
  for all \(\hat{h}\in\hat{H}\).  Now map~\(v_h\) to
  \(\U\Mult(B\rtimes\hat{G})\) using the homomorphism
  \(\U\Mult(\Cont_0(\hat{H}))\to \U\Mult(B\rtimes\hat{G})\) induced
  by the \(\Cont_0(\hat{H})\)-\(\Cst\)-algebra structure on~\(B\).
  These unitaries \(v_h\in\U\Mult(B\rtimes\hat{G})\) commute
  with~\(B\) because \(\Cont_0(\hat{H})\) is mapped to the centre
  of~\(B\), and they satisfy
  \[
  v_h\delta_{\hat{g}} v_h^*
  = \delta_{\hat{g}} \hat{g}(\tcm h)\cdot v_h v_h^*
  = \hat{g}(\tcm h)\cdot \delta_{\hat{g}}
  \]
  for \(\hat{g}\in\hat{G}\).  Hence \(\hat\beta_{\tcm(h)}=\Ad(v_h)\)
  for all \(h\in H\).  Since \(\hat{\beta}_g(v_h)=v_h\) for all
  \(g\in G\), \(h\in H\), the map \(h\mapsto v_h\) and the dual
  action~\(\hat{\beta}\) of~\(G\) combine to a strict action of the
  crossed module~\(\cm\) on \(B\rtimes\hat{G}\).

  The above constructions extend to equivariant correspondences in a
  natural way and thus provide a functor
  \(\Corr(\hat{H}\rtimes\hat{G})\to\Corr(\cm)\).  We claim that this
  is quasi-inverse to the functor
  \(\Corr(\cm)\to\Corr(\hat{H}\rtimes\hat{G})\) constructed above.
  We must compose these functors in either order and check that
  the resulting functors are equivalent to the identity
  functors.  Since all actions by correspondences are equivalent
  to strict actions by automorphisms, it is enough to verify the
  equivalence on those objects in \(\Corr(\cm)\) and
  \(\Corr(\hat{H}\rtimes\hat{G})\) that are strict actions by
  automorphisms.

  Let \(\alpha\colon G\to\Aut(A)\) and \(u\colon H\to\U\Mult(A)\) be
  a strict action of~\(\cm\) on~\(A\).  Our functor to
  \(\Corr(\hat{H}\rtimes\hat{G})\) maps it to \(A\rtimes G\)
  equipped with the dual action of~\(\hat{G}\) and with the
  canonical map \(\Cont_0(\hat{H})\cong \Cst(H)\to Z\Mult(A\rtimes
  G)\) induced by the representation \(h\mapsto
  u_h^*\delta_{\tcm(h)}\) for \(h\in H\); this is how the
  translation from strict actions to Fell bundles works on the level
  of the unitaries~\(u_h\) and~\(\u_h\) (see
  Example~\ref{exa:StrictToFellBundles}).  The map to actions
  of~\(\cm\) takes this to \((A\rtimes G)\rtimes\hat{G}\) equipped
  with the dual action of~\(G\) and the homomorphism
  \(H\to\U\Mult(A\rtimes G\rtimes\hat{G})\), \(h\mapsto
  (u_h^*\delta_{\tcm(h)})^{-1} \in \U\Mult(A\rtimes G)\subseteq
  \U\Mult(A\rtimes G\rtimes\hat{G})\) because the isomorphism
  \(\Cont_0(\hat{H})\cong \Cst(H)\) maps \(v_h\mapsto
  \delta_{h^{-1}}\).

  Takesaki--Takai duality provides a canonical \(G\)\nb-equivariant
  isomorphism
  \[
  A\rtimes G\rtimes\hat{G}\cong A\otimes \Comp(L^2G)
  \cong \Comp(A\otimes L^2G).
  \]
  It extends the standard representation of \(A\rtimes G\) on
  \(A\otimes L^2 G\) that maps \(a\in A\) to the operator
  \(\varphi(a)\) of pointwise multiplication by \(G\ni g\mapsto
  \alpha_g(a)\) and \(g\in G\) to the right translation operator
  \(\rho_gf(g')=f(g'g)\) for all \(g'\in G\).  Hence
  \((u_h^*\delta_{\tcm(h)})^{-1} = \delta_{\tcm(h)^{-1}} u_h\) acts
  by the unitary operator
  \[
  (\rho_{\tcm(h)^{-1}}\varphi(u_h) f)(g) = u_h\cdot f(g\tcm(h)^{-1})
  = u_h\cdot f(\tcm(h)^{-1}g)
  \]
  because \(G\) is Abelian and \(\alpha_g(u_h)=u_h\) for all \(g\in
  G\), \(h\in H\).  This gives the operator
  \(u_h\otimes\lambda_{\tcm(h)}\) for the left regular
  representation~\(\lambda\) on~\(G\).  The imprimitivity bimodule
  \(A\otimes L^2G\) between \(\Comp(A\otimes L^2G)\cong A\rtimes
  G\rtimes\hat{G}\) and~\(A\) with the \(G\)\nb-action
  \(g\mapsto\alpha_g\otimes\lambda_g\) is \(\cm\)\nb-equivariant and
  thus provides a \(\cm\)\nb-equivariant equivalence \(A\simeq
  A\rtimes G\rtimes\hat{G}\).

  Now let \(B\) carry a strict \(\hat{H}\rtimes\hat{G}\)-action.  That
  is, \(\hat{G}\) acts on~\(B\) via an action~\(\beta\), and we have a
  nondegenerate \(\hat{G}\)\nb-equivariant \Star{}homomorphism
  \(\phi\colon\Cont_0(\hat{H})\to Z\Mult(B)\).  Our functor to
  \(\Corr(\cm)\) takes this to \(B\rtimes \hat{G}\) equipped with the
  dual action of~\(G\) and the homomorphism \(H\to\U Z\Mult(B\rtimes
  \hat{G})\) defined as the composite of the representation
  \(v\colon H\to Z\Mult(B)\), \(h\mapsto \phi(v_h)\), with
  \(v_h(\hat{h}) \defeq \hat{h}(h)^{-1}\) and the canonical embedding
  \(B\to \Mult(B\rtimes\hat{G})\).  The functor that goes back to an
  \(\hat{H}\rtimes\hat{G}\)-action now gives \(B\rtimes \hat{G}\rtimes
  G\) with the dual action of~\(\hat{G}\) and with the nondegenerate
  \Star{}homomorphism \(\Cst(H)\to Z\Mult(B\rtimes \hat{G}\rtimes G)\)
  that maps \(h\in H\) to \(\phi(v_h^*)\delta_{\tcm(h)}\) -- here
  \(\phi(v_h)\) is viewed as an element of \(\Mult(B\rtimes
  \hat{G}\rtimes G)\) using the canonical homomorphism \(B\to
  \Mult(B\rtimes \hat{G}\rtimes G)\).  Now we identify
  \(\Cont_0(\hat{H})\cong\Cst(H)\) as before, mapping the
  character~\(v_h^*\) to~\(\delta_h\).  Thus the resulting map
  \(\Cont_0(\hat{H})\to Z\Mult(B\rtimes\hat{G}\rtimes G)\)
  maps~\(v_h^*\) to \(\phi(v_h^*)\delta_{\tcm(h)}\).  On the other
  hand, via the isomorphism \(\Cont_0(\hat{H})\cong\Cst(H)\), the
  homomorphism \(\phi\colon \contz(\hat{H})\to \Mult(B)\) corresponds
  to the integrated form of the representation \(H\to\Mult(B)\),
  \(h\mapsto \phi(v_h^*)\).  The \(\hat{G}\)\nb-equivariance
  of~\(\phi\) means that \(\beta_{\hat{g}}(\phi(v_h^*))
  =\hat{g}(\tcm(h))\cdot \phi(v_h^*)\) (recall that the
  \(\hat{G}\)\nb-action on \(\contz(\hat{H})\) is induced by the right
  translation \(\hat{G}\)\nb-action on~\(\hat{H}\) via
  \((\hat{h},\hat{g})\mapsto \hat{h}\cdot\hat{\tcm}(\hat{g})\)).

  Takesaki--Takai duality gives a \(\hat{G}\)\nb-equivariant isomorphism
  \[
  B\rtimes\hat{G}\rtimes G\cong B\otimes \Comp(L^2\hat{G})
  \cong \Comp(B\otimes L^2\hat{G}).
  \]
  It restricts to the standard representation of \(B\rtimes\hat{G}\)
  on \(B\otimes L^2\hat{G}\), where \(b\in B\) acts by \(b\cdot
  f(\hat{g}) = \beta_{\hat{g}}(b) f(\hat{g})\) and
  \(\hat{g}\in\hat{G}\) acts by right translation; the representation
  of~\(G\) lets \(g\in G\) act by \(g\cdot f(\hat{g})=\hat{g}(g)^{-1}
  f(\hat{g})\) for all \(f\in \contc(\hat{G},B)\sbe B\otimes
  L^2\hat{G}\).  Since \(\beta_{\hat{g}}(\phi(v_h^*))
  =\hat{g}(\tcm(h))\cdot \phi(v_h^*)\), we get
  \[
  (\phi(v_h^*)\delta_{\tcm(h)})\cdot f(\hat{g})
  = \hat{g}(\tcm(h))\cdot \phi(v_h^*) \cdot \hat{g}(\tcm(h))^{-1}\cdot f(\hat{g})
  = \phi(v_h^*)\cdot f(\hat{g}).
  \]
  This means that the \(\hat{G}\)\nb-equivariant
  \(B\rtimes\hat{G}\rtimes G\)-\(B\)-equivalence bimodule \(B\otimes
  L^2\hat{G}\) also intertwines the representations of~\(H\) (and
  hence the representations of \(\contz(\hat{H})\)) on
  \(B\rtimes\hat{G}\rtimes G\) and~\(B\).  In other words, \(B\otimes
  L^2\hat{G}\) is an \(\hat{H}\rtimes\hat{G}\)\nb-equivariant
  Morita--Rieffel equivalence between \(B\) and \(B\rtimes
  \hat{G}\rtimes G\).
\end{proof}

Despite Theorem~\ref{the:Abelian_cm_groupoid}, there is an important
difference between \(\Corr(\cm)\) and
\(\Corr(\hat{H}\rtimes\hat{G})\).  Both \(2\)\nb-categories come with a
natural tensor product structure: take the diagonal action on the
tensor product in \(\Corr(\cm)\), or the diagonal action on the tensor
product over the base space~\(\hat{H}\) in
\(\Corr(\hat{H}\rtimes\hat{G})\).  These tensor products are quite
different.  In terms of \(\Corr(\hat{H}\rtimes\hat{G})\), the natural
tensor product in \(\Corr(\cm)\) does the following.  Take two
\(\Cst\)\nb-algebras \(A_1\) and~\(A_2\) with actions of
\(\hat{H}\rtimes\hat{G}\).  Their tensor product is a
\(\Cst\)\nb-algebra over \(\hat{H}\times\hat{H}\) with a compatible
action of the group \(\hat{G}\times\hat{G}\).  Restrict the group
action to the diagonal (this gives the usual diagonal action).  But
instead of restricting the \(\Cst\)\nb-algebra to the diagonal in
\(\hat{H}\times\hat{H}\) as usual, give it a structure of
\(\Cont_0(\hat{H})\)-\(\Cst\)\nb-algebra using the comultiplication
\(\Cont_0(\hat{H})\to \Cont_0(\hat{H}\times\hat{H})\).

\begin{remark}
  \label{rem:duality_crossed_groupoid}
  There is a more symmetric form of our duality where both partners in
  the duality are of the same form: both are length-two chain
  complexes of locally compact Abelian groups
  \[
  H\xrightarrow{d} G\xrightarrow{d} K,
  \]
  with the dual of the form \(\hat{K}\to\hat{G}\to\hat{H}\).  An
  action of such a complex consists of an action of the crossed module
  \(H\to G\) and an action of the transformation groupoid \(K\rtimes G\), where both
  actions contain the same action of the group~\(G\); here the
  groupoid \(K\rtimes G\) is the transformation groupoid for the
  translation action \(k\cdot g\defeq k\cdot d(g)\) of~\(G\) on~\(K\).
  Actually, these actions are the actions of the \(2\)\nb-groupoid with
  object space~\(K\), arrows \(g\colon k\to k\cdot d(g)\), and
  \(2\)\nb-arrows \(H\colon g\Rightarrow g\cdot d(h)\); the chain complex
  condition \(d^2=0\) ensures that this is a strict \(2\)\nb-groupoid.  Our
  proof above shows that the \(2\)\nb-groupoids \(H\to G\to K\) and
  \(\hat{K}\to\hat{G}\to\hat{H}\) have equivalent action
  \(2\)\nb-categories on \(\Cst\)\nb-algebras.  Namely, an action of the
  crossed module from \(H\to G\) on~\(A\) induces an action of the
  groupoid from \(\hat{G}\to\hat{H}\) on \(A\rtimes G\), and an action
  of the groupoid from \(G\to K\) induces an action of the crossed
  module from \(\hat{K}\to\hat{G}\) on \(A\rtimes G\).  Since both
  constructions involve the same dual action of~\(\hat{G}\), we get an
  action of \(\hat{K}\to\hat{G}\to\hat{H}\) on \(A\rtimes G\).  Now
  the functor backwards has exactly the same form, and going back and
  forth is still a stabilisation functor.  Since the stabilisation is
  compatible with the crossed module and groupoid parts of our
  actions, it is compatible with their combination to a \(2\)\nb-groupoid
  action.
\end{remark}

In Section~\ref{sec:equivalence_crossed_modules} we have studied
equivalence of general crossed modules via homomorphisms.  We end
this section by a criterion for equivalences between Abelian crossed
modules.

\begin{proposition}
  Let \(\cm_i=(G_i,H_i,\tcm_i,0)\), \(i=1,2\), be Abelian crossed
  modules and let \((\varphi,\psi)\colon \cm_1\to\cm_2\) be a
  homomorphism.  The following are equivalent:
  \begin{enumerate}
  \item the homomorphism \((\varphi,\psi)\) is an equivalence;
  \item the diagram
    \[
    H_1 \xrightarrow{\iota_1}
    G_1\times H_2 \xrightarrow{\pi_2}
    G_2
    \]
    is an extension of locally compact Abelian groups, where
    \begin{alignat*}{2}
      \iota_1\defeq (\tcm_1^{-1},\psi)&\colon H_1\to G_1\times H_2,&\qquad
      h_1&\mapsto (\tcm_1(h_1)^{-1},\psi(h_1)),\\
      \pi_2&\colon G_1\times H_2\to G_2,&\qquad
      (g_1,h_2)&\mapsto \varphi(g_1)\tcm_2(h_2);
    \end{alignat*}
  \item the dual diagram
    \[
    \widehat{H_1} \xleftarrow{\widehat{\iota_1}}
    \widehat{G_1}\times \widehat{H_2} \xleftarrow{\widehat{\pi_2}}
    \widehat{G_2}
    \]
    is an extension of locally compact Abelian groups;
  \item the dual homomorphism \((\hat{\psi},\hat{\varphi})\colon
    \hat{\cm}_2\to \hat{\cm}_1\) is an equivalence.
\end{enumerate}
\end{proposition}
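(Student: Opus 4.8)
The plan is to run the cycle $(1)\Rightarrow(2)\Rightarrow(3)\Rightarrow(4)\Rightarrow(1)$ as two genuinely distinct arguments: the equivalences $(1)\iff(2)$ and $(3)\iff(4)$ are the same statement applied to $(\varphi,\psi)$ and to its dual, while $(2)\iff(3)$ is Pontryagin duality. The only nontrivial analytic input is that Pontryagin duality is exact on locally compact Abelian groups; the rest is a translation of Lemma~\ref{lem:cm_equivalence_criterion} into the language of group extensions, together with careful bookkeeping of inversions.

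First I would prove $(1)\iff(2)$. Let $\sigma$ be the automorphism of $G_1\times H_2$ given by $\sigma(g_1,h_2)=(g_1^{-1},h_2)$. A direct computation using $\tcm_2\circ\psi=\varphi\circ\tcm_1$ shows that $\pi_2\circ\iota_1$ is trivial, so $\iota_1(H_1)\subseteq\ker\pi_2$, and that $\sigma$ restricts to a homeomorphism from $\ker\pi_2$ onto the fibre product $G_1\times_{G_2}H_2=\{(g_1,h_2):\varphi(g_1)=\tcm_2(h_2)\}$, intertwining $\iota_1$ with the map $(\tcm_1,\psi)$ of Lemma~\ref{lem:cm_equivalence_criterion}. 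Since kernels and fibre products of continuous homomorphisms between Hausdorff groups are closed, the sequence in (2) is an extension exactly when $\pi_2$ is an open surjection and $\iota_1$ is a homeomorphism onto $\ker\pi_2$; transported by $\sigma$, these are precisely conditions~\ref{en:cm_equivalence_2} and~\ref{en:cm_equivalence_1} of Lemma~\ref{lem:cm_equivalence_criterion}, which by definition characterise when $(\varphi,\psi)$ is an equivalence. Hence (1) and (2) are equivalent.

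For $(2)\iff(3)$ I would invoke that Pontryagin duality is an \emph{exact} contravariant self-equivalence of the category of locally compact Abelian groups, sending closed embeddings to open surjections and conversely, and hence carrying short exact sequences to short exact sequences. Under the canonical identification $\widehat{G_1\times H_2}\cong\widehat{G_1}\times\widehat{H_2}$, the transposes of $\iota_1$ and $\pi_2$ are exactly the maps $\widehat{\iota_1}$ and $\widehat{\pi_2}$ of (3), so the dual of the sequence in (2) is the sequence in (3), and one is an extension iff the other is.

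Finally, $(3)\iff(4)$ reduces to $(1)\iff(2)$ applied to the dual homomorphism $(\hat\psi,\hat\varphi)\colon\hat\cm_2\to\hat\cm_1$. Computing transposes, for $\xi\in\widehat{G_2}$ and $(\chi,\zeta)\in\widehat{G_1}\times\widehat{H_2}$ one finds $\widehat{\pi_2}(\xi)=(\hat\varphi(\xi),\hat\tcm_2(\xi))$ and $\widehat{\iota_1}(\chi,\zeta)=\hat\tcm_1(\chi)^{-1}\hat\psi(\zeta)$. These coincide, up to interchanging the two factors of $\widehat{G_1}\times\widehat{H_2}$ and applying coordinatewise inversions (the exact analogue of the automorphism $\sigma$ above, now on both the middle and the outer terms), with the two maps that the recipe of (2) attaches to $(\hat\psi,\hat\varphi)$. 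As such permutations and inversions are topological group isomorphisms, they preserve the property of being an extension; thus (3) holds iff the criterion sequence for $(\hat\psi,\hat\varphi)$ is an extension, i.e.\ iff $(\hat\psi,\hat\varphi)$ is an equivalence by $(1)\iff(2)$. I expect the main obstacle to be purely organisational—keeping the factor order and the inversions consistent between the literal dual sequence in (3) and the sequence of Lemma~\ref{lem:cm_equivalence_criterion} for the dual homomorphism—since all the substantive content is concentrated in the exactness of Pontryagin duality used for $(2)\iff(3)$.
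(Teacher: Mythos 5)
Your proposal is correct and follows essentially the same route as the paper: translate conditions \ref{en:cm_equivalence_1} and \ref{en:cm_equivalence_2} of Lemma~\ref{lem:cm_equivalence_criterion} into the extension property of \(H_1\to G_1\times H_2\to G_2\) (the paper invokes the same inversion trick that your automorphism \(\sigma\) makes explicit), pass through Pontryagin duality for \((2)\iff(3)\), and apply the first equivalence to \((\hat\psi,\hat\varphi)\) for \((3)\iff(4)\). Your explicit computation of the transposes \(\widehat{\pi_2}\) and \(\widehat{\iota_1}\) and the factor-swap bookkeeping only spell out what the paper compresses into ``as above,'' and all of your computations check out.
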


\begin{proof}
  We shall use Lemma~\ref{lem:cm_equivalence_criterion}.  Since
  inversion on a topological group is a homeomorphism, the map in
  condition~\ref{en:cm_equivalence_1} in
  Lemma~\ref{lem:cm_equivalence_criterion} is a homeomorphism if and
  only if~\(\iota_1\) is a homeomorphism onto \(\ker\pi_2\).  The
  map~\(\pi_2\) is the same one appearing in condition~\ref{en:cm_equivalence_2} in
  Lemma~\ref{lem:cm_equivalence_criterion}.  This is a homomorphism
  since all groups involved are Abelian, and its kernel is exactly
  the image of~\(\iota_1\).  Therefore, the assumptions in
  Lemma~\ref{lem:cm_equivalence_criterion} hold if and only if \(H_1
  \to G_1\times H_2 \to G_2\) is a topological group extension.
  Since taking duals preserves this property, this is equivalent to
  \(\widehat{H_1} \leftarrow \widehat{G_1}\times \widehat{H_2}
  \leftarrow \widehat{G_2}\) being a topological group extension.
  As above, this is equivalent to \((\hat{\psi},\hat{\varphi})\)
  being an equivalence.
\end{proof}

\section{Crossed products for crossed module extensions}
\label{sec:partial_crossed}

Now we come to the factorisation of the crossed product functor for
an extension of crossed modules.  Let
\(\cm_i=(G_i,H_i,\tcm_i,\acm_i)\) for \(i=1,2,3\) be crossed modules
of locally compact groups.

\begin{definition}
  \label{def:strict_extension}
  A diagram \(\cm_1\to\cm_2\to\cm_3\) of homomorphisms of crossed
  modules is called a \emph{strict extension} of crossed modules if
  the resulting diagrams
  \[
  H_1\xrightarrow{\psi_1} H_2\xrightarrow{\psi_2} H_3
  \quad\text{and}\quad
  G_1\xrightarrow{\varphi_1} G_2\xrightarrow{\varphi_2} G_3
  \]
  are extensions of locally compact groups.  That is, \(\psi_1\) is
  a homeomorphism onto the kernel of~\(\psi_2\) and~\(\psi_2\) is an
  open surjection, and similarly for \(\varphi_1\)
  and~\(\varphi_2\).
\end{definition}

\begin{theorem}
  \label{the:partial_crossed}
  Let \(\cm_1\into \cm_2\onto \cm_3\) be a strict extension of
  crossed modules and let~\(A\) be a \(\Cst\)-algebra with an action
  of~\(\cm_2\) by correspondences.  Then \(A\rtimes\cm_1\) carries a
  canonical action of~\(\cm_3\) by correspondences such that
  \((A\rtimes\cm_1)\rtimes \cm_3\) is naturally isomorphic
  to~\(A\rtimes\cm_2\).
\end{theorem}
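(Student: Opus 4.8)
The plan is to work throughout in the Fell bundle picture and to construct an explicit saturated Fell bundle $(\B,\u^3)$ over~$\cm_3$ whose cross-sectional \cstar{}algebra simultaneously realises both sides of the asserted isomorphism. Using Theorem~\ref{theo:ActionCorrespondences=FellBundles} I first replace the $\cm_2$\nb-action on~$A$ by a saturated Fell bundle $(\A,\u)$ over~$\cm_2$, so that $A\rtimes\cm_2=\Cst(\A,\u)$ by Proposition~\ref{pro:crossed_product}. Since $\cm_1\into\cm_2$ is an extension, $G_1$ is a closed subgroup of~$G_2$ and the restriction $\A|_{G_1}=(\A_g)_{g\in G_1}$ is again a saturated Fell bundle; for $h\in H_1$ the multiplier~$\u_h$ has order $\tcm_1(h)\in G_1$, so $(\A|_{G_1},\u|_{H_1})$ is a saturated Fell bundle over~$\cm_1$ with cross-sectional \cstar{}algebra $C\defeq\Cst(\A|_{G_1},\u|_{H_1})=A\rtimes\cm_1$.

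The core construction is a \emph{partial crossed product} Fell bundle $\B=(\B_{\bar g})_{\bar g\in G_3}$ over $G_3=G_2/G_1$ with unit fibre $\B_1=C$. For a coset $\bar g$ with lift $g\in G_2$, I take $\B_{\bar g}$ to be the Hilbert $C$\nb-bimodule completion of the compactly supported sections $\contc(\A|_{gG_1})$ over the coset, where the $C$\nb-valued inner products and the left and right module structures come from convolution and involution in~$\A$ composed with the quotient map to~$C$. Different lifts give canonically isomorphic fibres through translation inside~$\A$, the multiplication in~$\A$ induces continuous multiplications $\B_{\bar g_1}\times\B_{\bar g_2}\to\B_{\bar g_1\bar g_2}$, and saturation of~$\B$ over~$G_3$ is inherited from that of~$\A$. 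This is the Fell bundle analogue of the classical decomposition $A\rtimes G_2\cong(A\rtimes G_1)\rtimes(G_2,G_1)$ for a Green twisted action, and I expect it to be the main obstacle: one must fix Haar measures on $G_1$ and~$G_3$, equip $\B=\bigsqcup_{\bar g}\B_{\bar g}$ with a Banach bundle topology over~$G_3$ (using that $\Gamma_0(\A)$ pushes forward along the open map $G_2\to G_3$), and check that passing from $\Cst(\A|_{G_1})$ to its quotient~$C$ is compatible with all fibres (equivalently, first form the Fell bundle without the $\u$\nb-data and then divide the unit fibre by the $H_1$\nb-relations); this fibrewise compatibility of the $H_1$\nb-relations is exactly condition~\ref{def:FellBundle2Group2} for $h\in H_1$. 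With these in hand the cross-sectional algebra satisfies
\[
\Cst(\B)\cong\Cst(\A)\big/\cspn\{x(\rho^\un(\u_h)-1)y:x,y\in\Cst(\A),\ h\in H_1\},
\]
that is, $\Cst(\B)=\Cst(\A,\u|_{H_1})$.

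Next I promote $\B$ to a Fell bundle over~$\cm_3$ by defining $\u^3\colon H_3\to\U\Mult(\B)$. For $\bar h\in H_3$ with lift $h\in H_2$ the multiplier~$\u_h$ descends to a multiplier of~$\B$ of order $\varphi_2(\tcm_2(h))=\tcm_3(\psi_2(h))=\tcm_3(\bar h)$, using that $\cm_2\to\cm_3$ is a homomorphism of crossed modules. Two lifts differ by an element of~$H_1$, whose multiplier is already equal to~$1$ in the unit fibre $\B_1=C$, so $\u^3_{\bar h}\defeq[\u_h]$ is a well-defined unitary multiplier of~$\B$; and $\u^3$ is a strictly continuous homomorphism because $\psi_2$ is an open surjection. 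Condition~\ref{def:FellBundle2Group2} for $(\B,\u^3)$ reduces to the corresponding condition for~$\u$, where one checks that $\acm_{2,k}(h)\equiv h\pmod{H_1}$ for $k\in G_1$, so that $\acm_{3,\bar g}(\bar h)$ is well defined and matches. Thus $(\B,\u^3)$ is a saturated Fell bundle over~$\cm_3$, which by Theorem~\ref{theo:ActionCorrespondences=FellBundles} is the sought canonical $\cm_3$\nb-action by correspondences on $A\rtimes\cm_1=C$.

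Finally I identify the crossed products. By Definition~\ref{def:CrossSectionalAlgebraForFellBundlesOverCrossedModules} and Proposition~\ref{pro:crossed_product},
\[
(A\rtimes\cm_1)\rtimes\cm_3=\Cst(\B,\u^3)=\Cst(\B)\big/\langle\u^3_{\bar h}-1:\bar h\in H_3\rangle=\Cst(\A)\big/\langle\u_h-1:h\in H_2\rangle,
\]
since the relations $\u_h=1$ for $h\in H_1$ (built into $\B_1=C$) together with $\u_h=1$ for $h$ running through a transversal of $H_3=H_2/H_1$ generate, via the homomorphism property of~$\u$, all relations $\u_h=1$ for $h\in H_2$. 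The right-hand side is $\Cst(\A,\u)=A\rtimes\cm_2$, which gives the isomorphism. Naturality follows by running the same construction on equivariant correspondences and invoking the functoriality in Theorems~\ref{theo:ActionCorrespondences=FellBundles} and~\ref{the:Fell_bundle_correspondences}: a transformation of $\cm_2$\nb-actions restricts to one of $\cm_1$\nb-actions, induces a correspondence of the partial-crossed-product Fell bundles over~$\cm_3$, and the resulting square of crossed products commutes.
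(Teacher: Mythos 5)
You take a genuinely different route from the paper, but the decisive step of your argument is asserted rather than proved. Everything funnels through your ``core construction'': that the fibrewise completions \(\B_{\bar g}\) of \(\contc(\A|_{gG_1})\) assemble into a saturated Fell bundle over \(G_3\) with \(\Cst(\B)\cong\Cst(\A,\u|_{H_1})\) as \emph{full} cross-sectional algebras. You correctly flag this as ``the main obstacle'' and then move on without closing it, yet this is exactly the hard analytic content: one must show that \(\bar g\mapsto \norm{f|_{gG_1}}_{\B_{\bar g}}\) is continuous (not merely upper semicontinuous) even though each fibre norm is computed via the quotient \(C=\Cst(\A|_{G_1},\u|_{H_1})\), so that Fell's criterion produces a Banach bundle; that \(\u^3\) is strictly continuous for this topology; and, above all, that representations of \(\B\) correspond bijectively to representations of \(\A\) killing the \(H_1\)\nb-relations, which requires extending a representation of \(\B\) to the order-\(\bar g\) multipliers of \(\B\) induced by elements of \(\A_g\), checking boundedness and nondegeneracy, and matching convolutions via Weil's formula \(\int_{G_2}=\int_{G_3}\int_{G_1}\). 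The paper is explicit that even in the special case of an ordinary group extension this \(\Cst\)\nb-level decomposition ``appears to be new'' --- only an \(L^1\)\nb-version is proved in \cite{Doran-Fell:Representations_2}*{VIII.6} --- and it obtains the Fell bundle \(\B\) over \(G_3\) as a \emph{corollary} of Theorem~\ref{the:partial_crossed}, not as an ingredient. So, as written, your proof assumes a statement essentially equivalent in difficulty to the theorem itself.

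For contrast, the paper's proof deliberately avoids constructing anything over \(G_3\) directly. It first reduces to strict actions by automorphisms (with the Packer--Raeburn trick, \cite{Buss-Meyer-Zhu:Higher_twisted}*{Theorem 5.3}, invoked at the end to transfer the result to actions by correspondences), then builds a \emph{strict} action on \(A\rtimes\cm_1\) of an auxiliary crossed module \(\cm=(G_2,H,\tcm,\acm)\) with \(H=(G_1\ltimes H_2)/\Delta(H_1)\), staying over the group \(G_2\): the action \(\gamma\) is the decomposition action of \(G_2\) on \(A\rtimes G_1\) descended to the quotient, and the unitaries come from \(\delta_g\) for \(g\in G_1\) and from \(u_h\) for \(h\in H_2\). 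The isomorphism \((A\rtimes\cm_1)\rtimes\cm\cong A\rtimes\cm_2\) is then a purely universal-property comparison of covariant representations --- no norm estimates, no bundle topologies, no Haar-measure bookkeeping beyond the group case --- and the passage from \(\cm\) to \(\cm_3\) is delegated to Example~\ref{exa:cm-equivalence_quotient} and Theorem~\ref{the:equivalence_for_actions}, where the analytic work of pushing Hilbert modules along a groupoid equivalence has already been done once and for all. Your well-definedness checks for \(\u^3\) (lifts differing by \(H_1\), \(\acm_2\)\nb-invariance of \(H_1=\ker\psi_2\), condition~\ref{def:FellBundle2Group2}) and your final generation argument for the \(H_2\)\nb-relations are sound; to make the whole proposal a proof you must either carry out the representation-theoretic identification of \(\Cst(\B)\) in full detail, or reorganise along the paper's lines so that the equivalence theorem absorbs the descent to \(G_3\).
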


\begin{proof}
  Since strict actions by automorphisms are notationally simpler, we
  first prove the result in case~\(A\) carries a strict action by
  automorphisms.  Then we reduce the general case to this special
  case.  A strict action by automorphisms is given by group
  homomorphisms \(\alpha\colon G_2\to\Aut(A)\) and \(u\colon
  H_2\to\U\Mult(A)\) that satisfy \(\alpha_{\tcm_2(h)}=\Ad_{u_h}\) for
  all \(h\in H_2\) and \(\alpha_g(u_h)=u_{\acm_g(h)}\) for all \(g\in
  G_2\), \(h\in H_2\).  To simplify notation, we also view \(G_1\)
  and~\(H_1\) as subgroups of \(G_2\) and~\(H_2\), respectively, so
  that we drop the maps \(\varphi_1\) and~\(\psi_1\).

  The crossed product \(A\rtimes\cm_1\) is a quotient of the crossed
  product \(A\rtimes G_1\) for the group~\(G_1\) by the ideal
  generated by the relation \(u_h\sim\delta_{\tcm_1(h)}\) for all
  \(h\in H_1\); that is, we divide \(A\rtimes G_1\) by the closed
  linear span of the subset
  \[
  \{x\cdot (u_h-\delta_{\tcm_1(h)})\cdot y:
  h\in H_1, x,y\in A\rtimes G_1\}.
  \]
  A canonical action~\(\gamma'\) of~\(G_2\) on \(A\rtimes G_1\) is
  defined by
  \[
  (\gamma'_{g_2} f)(g_1) \defeq \alpha_{g_2} (f(g_2^{-1}g_1g_2))
  \]
  for \(g_2\in G_2\), \(g_1\in G_1\), \(f\in\Contc(G_1, A)\) on the
  dense subalgebra \(\Contc(G_1,A)\); this extends to the
  \(\Cst\)\nb-completion.  The action~\(\acm_2\) of~\(G_2\) on~\(H_2\)
  leaves \(H_1\subseteq H_2=\ker \psi_2\) invariant because~\(\psi_2\)
  is \(G_2\)\nb-equivariant.  Since~\(\gamma'_g\) for \(g\in G_2\)
  maps \(u_h-\delta_{\tcm_1(h)}\) to
  \(u_{\acm_g(h)}-\delta_{\tcm_1(\acm_g(h))}\) and
  \(\acm_{G_2}(H_1)\subseteq H_1\), the action~\(\gamma'\) descends to
  an action~\(\gamma\) of~\(G_2\) on~\(A\rtimes\cm_1\).

  Let~\(U_g\) for \(g\in G_1\) be the image of
  \(\delta_g\in\U\Mult(A\rtimes G_1)\) in \(\U\Mult(A\rtimes\cm_1)\);
  this defines a homomorphism \(G_1\to\U\Mult(A\rtimes\cm_1)\) with
  \(\gamma_g = \Ad(U_g)\) all \(g\in G_1\).  Also let
  \(U_h\in\U\Mult(A\rtimes\cm_1)\) for \(h\in H_2\) be the image of
  \(u_h\in \U\Mult(A)\) under the canonical map \(A\to \Mult(A\rtimes
  G_1)\to\Mult(A\rtimes\cm_1)\).  We claim that \(\gamma_{\tcm_2(h)} =
  \Ad(U_h)\) for all \(h\in H_2\).  To see this, we notice first that
  \(\acm_{2,g}(h)h^{-1} \in H_1 = \ker \psi_2\) for \(g\in G_1\)
  because \(\varphi_2(g)=1\) implies \(\psi_2(\acm_{2,g}(h)h^{-1}) =
  \psi_2(\acm_{3,\varphi_2(g)}(h)h^{-1}) = 1\).  Hence
  \(U_{\acm_{2,g^{-1}}(h)h^{-1}} =
  \delta_{\tcm_1(\acm_{2,g^{-1}}(h)h^{-1})}\) holds in
  \(\U\Mult(A\rtimes\cm_1)\).  This implies
  \begin{multline*}
    U_h\delta_g U_h^*
    = \delta_g U_{\acm_{2,g^{-1}}(h)} U_{h^{-1}}
    = \delta_g U_{\acm_{2,g^{-1}}(h)h^{-1}}
    \\= \delta_g \delta_{\tcm_1(\acm_{2,g^{-1}}(h)h^{-1})}
    = \delta_{g \tcm_2(\acm_{2,g^{-1}}(h)) \tcm_2(h)^{-1})}
    = \delta_{\tcm_2(h)g\tcm_2(h)^{-1}}.
  \end{multline*}
  Since we have assumed \(u_h a u_h^* = \alpha_{\tcm_2(h)}(a)\) for
  all \(a\in A\), we get \(U_h x U_h^* = \gamma_{\tcm_2(h)}(x)\) for
  all \(x\in A\rtimes\cm_1\).

  If \(g\in G_1\), \(h\in H_2\), then \(U_g U_h U_g^*\) is the image
  of \(\alpha_g(u_h) = u_{\acm_{2,g}(h)}\), that is, \(U_g U_h U_g^* =
  U_{\acm_{2,g}(h)}\).  Thus the map \((g,h)\mapsto U_g U_h\) is a
  homomorphism \(G_1\ltimes H_2\to\U\Mult(A\rtimes\cm_1)\) where the
  semidirect product uses the action \(G_1\subseteq G_2\to\Aut(H_2)\)
  given by restricting~\(\acm_2\).  Since \(U_h=U_{\tcm_1(h)}\) for
  \(h_1\in H_1\subseteq H_2\), we get a homomorphism on \(H\defeq
  G_1\ltimes H_2/\Delta(H_1)\) with the embedding \(\Delta\colon
  H_1\to G_1\ltimes H_2\), \(h\mapsto (\tcm(h)^{-1},h)\).  The map
  \(G_1\ltimes H_2\to G_2\), \((g,h)\mapsto g\cdot\tcm_2(h)\), is a
  group homomorphism which vanishes on \(\Delta(H_1)\) and hence
  descends to a group homomorphism \(\tcm\colon H\to G_2\).  We define
  a homomorphism \(\acm'\colon G_2\to\Aut(G_1\ltimes H_2)\) by
  \(\acm'_{g_2}(g_1,h_2)\defeq (g_2g_1g_2^{-1},\acm_{2,g_2}(h_2))\).
  This leaves \(\Delta(H_1)\) invariant and hence descends to a
  homomorphism \(\acm\colon G_2\to\Aut(H)\).  Putting all this
  together gives a crossed module \(\cm\defeq (G_2,H,\tcm,\acm)\)
  which acts on \(A\rtimes \cm_1\) by \(\gamma\colon
  G_2\to\Aut(A\rtimes\cm_1)\) and \(U\colon
  H\to\U\Mult(A\rtimes\cm_1)\).

  The homomorphism~\(\tcm\) maps \(G_1\subseteq H\) homeomorphically
  onto the closed normal subgroup \(G_1\subseteq G_2\).  We have
  \(G_2/\tcm(G_1) \cong G_3\) and \(H/G_1\cong H_2/H_1\cong H_3\).
  Example~\ref{exa:cm-equivalence_quotient} shows that~\(\cm\) is
  equivalent to the crossed module~\(\cm_3\).  By
  Theorem~\ref{the:equivalence_for_actions}, the action \((\gamma,U)\)
  of~\(\cm\) on~\(A\rtimes\cm_1\) is equivalent to an action
  of~\(\cm_3\) on~\(A\rtimes\cm_1\) by correspondences such that
  \((A\rtimes\cm_1)\rtimes \cm \cong (A\rtimes\cm_1)\rtimes\cm_3\).
  We claim that \((A\rtimes\cm_1)\rtimes \cm \cong A\rtimes\cm_2\).

  By the universal property a morphism \((A\rtimes\cm_1)\rtimes\cm\to
  D\) is equivalent to a \(\cm\)\nb-covariant representation of
  \(A\rtimes\cm_1\) in \(\Mult(D)\), that is, a morphism
  \(\rho_C\colon A\rtimes\cm_1\to\Mult(D)\) and a continuous
  homomorphism \(V\colon G_2\to\U\Mult(D)\) with \(V_g\rho_C(c)V_g^* =
  \rho_C(\gamma_g(c))\) for all \(c\in A\rtimes\cm_1\), \(g\in G_2\)
  and \(V_{\tcm(h)}=\rho_C(U_h)\) for all \(h\in H\).  By the
  universal property of~\(A\rtimes\cm_1\), the
  representation~\(\rho_C\) is equivalent to a morphism \(\rho_A\colon
  A\to\U\Mult(D)\) and a continuous homomorphism \(W\colon
  G_1\to\U\Mult(D)\) with \(W_g\rho_A(a)W_g^* = \rho_A(\alpha_g(a))\)
  for all \(a\in A\), \(g\in G_1\) and \(W_{\tcm_1(h)}=\rho_A(u_h)\)
  for all \(h\in H_1\).  The assumptions on~\(\rho_C\) are equivalent
  to \(V_g\rho_A(a)V_g^* = \rho_A(\alpha_g(a))\) for \(a\in A\),
  \(g\in G_2\); \(V_{g_2}W_{g_1}V_{g_2}^* = W_{g_2g_1g_2^{-1}}\) for
  \(g_2\in G_2\), \(g_1\in G_1\); \(V_g=W_g\) for \(g\in G_1\); and
  \(V_{\tcm_2(h)} = \rho_A(u_h)\) for \(h\in H_2\).  Thus the
  unitaries~\(W_g\) for \(g\in G_1\) are redundant, and the conditions
  on~\(\rho_A\) and the unitaries~\(V_g\) for \(g\in G_2\) are
  precisely those for a covariant representation of \(A\)
  and~\(\cm_2\).  Hence the morphisms
  \((A\rtimes\cm_1)\rtimes\cm\to\Mult(D)\) are in natural bijection
  with morphisms \(A\rtimes\cm_2\to\Mult(D)\).  This shows that
  \(A\rtimes\cm_2\cong (A\rtimes\cm_1)\rtimes\cm\).  Since
  \((A\rtimes\cm_1)\rtimes \cm \cong (A\rtimes\cm_1)\rtimes\cm_3\),
  this gives the desired isomorphism.

  We must show that the \(\cm_3\)\nb-action on~\(A\rtimes\cm_1\) is
  natural, so that \(A\mapsto (A\rtimes\cm_1)\rtimes\cm_3\) is a
  functor.  Here we still talk about functors defined on the full
  sub-\(2\)\nb-category of \(\Corr(\cm_2)\) consisting of strict
  actions by automorphisms.  The naturality of the
  \(\cm_3\)\nb-action is equivalent to the naturality of the
  \(\cm\)\nb-action by Theorem~\ref{the:equivalence_for_actions},
  which is what we are going to prove.

  A \(\cm_2\)\nb-transformation between two strict actions on \(A_1\)
  and~\(A_2\) by automorphisms is equivalent to a \(G\)\nb-equivariant
  correspondence~\(\Hilm\) from~\(A_1\) to~\(A_2\) in the usual sense,
  subject to the extra requirement \(u_h^{A_1}\cdot\xi = \xi\cdot
  u_h^{A_2}\) for all \(h\in H_2\), \(\xi\in\Hilm\).  For such a
  correspondence, we get an action of~\(G_2\) on the induced
  correspondence \(\Hilm\rtimes\cm_1\) from \(A_1\rtimes\cm_1\) to
  \(A_2\rtimes\cm_2\) by the same formulas as above, and this yields a
  \(\cm\)\nb-equivariant correspondence from \(A_1\rtimes\cm_1\) to
  \(A_2\rtimes\cm_2\).  Furthermore, isomorphic
  \(\cm_2\)\nb-equivariant correspondences induce isomorphic
  \(\cm\)\nb-equivariant correspondences.  Hence the \(\cm\)\nb-action
  on~\(A\rtimes\cm_1\) is natural on the \(2\)\nb-category of strict
  actions of~\(\cm_2\) by automorphisms.

  The isomorphism \(A\rtimes\cm_2 \to (A\rtimes\cm_1)\rtimes\cm\) is
  natural in the sense that for any \(\cm_2\)\nb-equivariant
  correspondence~\(\Hilm\) from \(A_1\) to~\(A_2\), the square formed
  by the isomorphisms above and the induced correspondences
  \(A_1\rtimes\cm_2\to A_2\rtimes\cm_2\) and
  \((A_1\rtimes\cm_1)\rtimes\cm_3\to(A_2\rtimes\cm_1)\rtimes\cm_3\)
  commutes up to a canonical isomorphism of correspondences.  This
  establishes the naturality of our isomorphism on the \(2\)\nb-category of
  strict actions of \(\cm_2\).

  By the Packer--Raeburn Stabilisation Trick, any action of~\(\cm_2\)
  by correspondences is equivalent to a strict \(\cm_2\)\nb-action by
  automorphisms (\cite{Buss-Meyer-Zhu:Higher_twisted}*{Theorem 5.3}),
  where equivalence means an isomorphism (that is, equivariant Morita equivalence) in the \(2\)\nb-category
  \(\Corr(\cm_2)\).  This equivalence means that a functor defined
  only on the subcategory of strict \(\cm_2\)\nb-actions may be
  extended to a functor on all of \(\Corr(\cm_2)\); all such
  extensions are naturally isomorphic; and a natural transformation
  between functors on the subcategory extends to a natural
  transformation between the extensions.  Hence the result for strict
  actions proves the more general result for actions by
  correspondences by abstract nonsense.
\end{proof}

\begin{example}
Let \(G\) be a locally compact group and let \(N\) be a closed normal subgroup of \(G\) so that we get a group extension \(N\into G\onto G/N\).
Viewing \(N\), \(G\) and \(G/N\) as crossed modules \(\cm_1=(N,0,0,0)\), \(\cm_2=(G,0,0,0)\) and \(\cm_3=(G/N,0,0,0)\), respectively, our result says that
given an action \(\alpha\) of \(G\) on a \cstar{}algebra \(A\) by correspondences, there is an action \(\beta\) of \(G/N\) on \(A\rtimes_{\alpha|} N\) by correspondences, where \(\alpha|\) denotes the restriction of \(\alpha\) to \(N\), such that
\[
A\rtimes_\alpha G\cong (A\rtimes_{\alpha|} N)\rtimes_{\beta} G/N.
\]
We may also interpret everything in terms of Fell bundles: the
action of~\(G\) on~\(A\) corresponds to a Fell bundle~\(\A\)
over~\(G\) with unit fibre \(\A_1=A\) in such way that
\(A\rtimes_\alpha G\) is (isomorphic to) the cross-sectional
\cstar{}algebra~\(\Cst(\A)\). The restricted crossed product
\(A\rtimes_{\alpha|}N\) corresponds to the cross-sectional
\cstar{}algebra~\(\Cst(\A_N)\) of the restriction~\(\A_N\) of~\(\A\)
to~\(N\).  Our theorem says that there is a Fell bundle~\(\B\)
over~\(G/N\) with unit fibre \(\B_1=A\rtimes_{\alpha|} N\cong
C^*(\A|_N)\) such that \(C^*(\B)\cong C^*(\A)\).  Although our
\cstar{}algebraic version appears to be new, a version for
\(L^1\)-cross-sectional algebras in proved by Doran and Fell in
\cite{Doran-Fell:Representations_2}*{VIII.6}.

Even if we start with a strict action of~\(G\) on~\(A\) by
\emph{automorphisms}, the induced action of~\(G/N\) on~\(A\rtimes
N\) will usually not be an action by automorphisms.  It may be
interpreted as a Green twisted action of \((G,N)\) on \(A\rtimes
N\), and the above decomposition corresponds to Green's
decomposition of crossed products: \(A\rtimes_\alpha G\cong
(A\rtimes N)\rtimes(G,N)\) (see \cites{Green:Local_twisted,
  Echterhoff:prime_ideal_space}).
\end{example}

We may weaken the notion of strict extension by replacing the
crossed modules involved by equivalent ones.  We mention only one
relevant example of this.

\begin{example}
  \label{exa:cm_as_extension}
  Let \(\cm=(G,H,\tcm,\acm)\) be a crossed module.  Let \(G_2\defeq
  G\ltimes_{\acm} H\) be the semidirect product group.  It contains
  \(H_2\defeq H\) as a normal subgroup via \(\tcm_2\colon H_2\to
  G_2\), \(h\mapsto (1,h)\), with quotient \(G_2/H_2\cong G\).  Let
  \(\acm_2\colon G_2\to\Aut(H_2)\) be the resulting conjugation
  action, \(\acm_{2,(g,h)}(k)\defeq \acm_g(hkh^{-1})\).  Then
  \(\cm_2=(G_2,H_2,\tcm_2,\acm_2)\) is a crossed module of locally
  compact groups that is equivalent to \((G,0,0,0)\).  Hence actions
  of~\(\cm_2\) are equivalent to actions of the group~\(G\), with
  the same crossed products on both sides
  (Theorem~\ref{the:equivalence_for_actions}).  Let
  \(\cm_1=(H,0,0,0)\) be the group~\(H\) turned into a crossed
  module and let \(\cm_3=\cm\).  We map \(\cm_2\to\cm\) by
  \(\psi_2=\Id\colon H\to H\) and \(\varphi_2\colon G\ltimes_{\acm}
  H\to H\), \((g,h)\mapsto g\cdot\tcm(h)\).  This is a homomorphism
  of crossed modules, and \(\psi_2\) and~\(\varphi_2\) are open
  surjections.  Their kernels are isomorphic to \(H_1\defeq 0\) and
  \(G_1\defeq H\) via \(\varphi_1\colon H\to G\ltimes_{\acm} H\),
  \(h\mapsto (\tcm(h)^{-1},h)\), respectively.  Thus we get a strict
  extension of crossed modules \(\cm_1\to \cm_2\to \cm\) with
  \(\cm_1=(H,0,0,0)\) and \(\cm_2\simeq (G,0,0,0)\).  Hence the
  group~\(G\) is equivalent to an extension of the group~\(H\) by
  the crossed module~\(\cm\).

  Now let~\(A\) carry an action of~\(G\), which we turn into an
  action of~\(\cm_2\) via \(G\ltimes H\to G\), \((g,h)\mapsto
  g\tcm(h)\).  When we apply Theorem~\ref{the:partial_crossed} to
  this situation, we get back
  \cite{Buss-Meyer-Zhu:Non-Hausdorff_symmetries}*{Theorem 1}:
  \[
  A\rtimes G\cong (A\rtimes H)\rtimes \cm.
  \]
\end{example}

\begin{example}
  Let~\(\theta\) be some irrational number and define an embedding
  \(\theta\colon \Z\to \R\) by \(n\mapsto \theta n\).  Let
  \(\cm=(\R,\Z,\theta,0)\) be the resulting Abelian crossed module.
  This is equivalent to the group~\(\Torus\) (viewed as the crossed
  module \((\Torus,0,0,0)\)) via the homomorphism
  \((\varphi,\psi)\colon\cm \to\Torus\) with \(\varphi\colon \R\to
  \Torus\), \(t\mapsto \exp(2\pi \ima\theta t)\), and the trivial
  homomorphism \(\psi\colon\Z\to 0\)
  (Example~\ref{exa:grouplike_classical}).
  Theorem~\ref{the:equivalence_for_actions} gives an equivalence of
  \(2\)\nb-categories \(\Corr(\Torus)\congto \Corr(\cm)\); it sends a
  \(\Torus\)\nb-algebra~\(A\) to itself with~\(\Z\) acting trivially
  and~\(\R\) via~\(\varphi\) and the given \(\Torus\)\nb-action.

  Now let \(\cm'=(\Torus,\Z,\tcm,0)\) be the crossed module
  considered in Example~\ref{exa:crossed_Z_T}, where
  \(\tcm(n)=\exp(2\pi\ima\theta n)\).  View the group~\(\Z\) as a
  crossed module.  There is an extension \(\Z\into \cm\onto\cm'\)
  described by the diagram:
  \[
  \begin{tikzpicture}[yscale=1.2,xscale=2,baseline=(current bounding box.west)]
    \node (m-0-1) at (0,2) {0};
    \node (m-0-2) at (1,2) {\(\Z\)};
    \node (m-0-3) at (2,2) {\(\Z\)};
    \node (m-1-1) at (0,1) {\(\Z\)};
    \node (m-1-2) at (1,1) {\(\R\)};
    \node (m-1-3) at (2,1) {\(\Torus\)};
    \draw[cdar] (m-0-1) -- node {} (m-0-2);
    \draw[cdar] (m-0-2) -- node {\Id} (m-0-3);
    \draw[cdar] (m-0-1) -- node[swap] {} (m-1-1);
    \draw[cdar] (m-0-3) -- node {\(\tcm\)} (m-1-3);
    \draw[cdar] (m-0-2) -- node[swap] {\(\theta\)} (m-1-2);
    \draw[cdar] (m-1-1) -- node {\Id} (m-1-2);
    \draw[cdar] (m-1-2) -- node {\(\varphi\)} (m-1-3);
  \end{tikzpicture}
  \]
  Therefore, Theorem~\ref{the:partial_crossed} gives a functor
  \(\Corr(\cm)\to\Corr(\cm')\) that sends a \(\cm\)\nb-algebra~\(A\)
  to the (restricted) crossed product \(A\rtimes\Z\) with an induced
  \(\cm'\)\nb-action, such that \(A\rtimes\cm\cong
  (A\rtimes\Z)\rtimes\cm'\).

  Composing this with the equivalence \(\Corr(\Torus)\cong\Corr(\cm)\)
  we obtain a functor \(\Corr(\Torus)\to\Corr(\cm')\) that sends a
  \(\Torus\)\nb-algebra~\(A\) to \(A\rtimes\Z\) with a
  \(\cm'\)\nb-action such that \((A\rtimes\Z)\rtimes\cm'\cong
  A\rtimes\Torus\).  As a simple example, we take the
  \(\Torus\)\nb-algebra \(\Cont(\Torus)\) with translation
  \(\Torus\)\nb-action. In this case, \(\Z\) acts by irrational
  rotation by multiples of~\(\theta\) so that
  \(\Cont(\Torus)\rtimes\Z\cong C^*(\Torus_\theta)\) is the
  noncommutative torus and the induced \(\cm'\)\nb-action is the same
  one considered in Example~\ref{exa:crossed_Z_T}.  Hence we get once
  again that \(C^*(\Torus_\theta)\rtimes\cm'\cong
  \Cont(\Torus)\rtimes\Torus\cong \Comp(L^2\Torus)\).

  Theorem~\ref{the:Abelian_cm_groupoid} shows that \(\Corr(\cm)\cong
  \Corr(\hat{\R}\ltimes\hat{\Z})\cong \Corr(\R\ltimes\Torus)\),
  where \(\R\ltimes \Torus\) denotes the transformation groupoid for
  the action \(t\cdot z=\exp(2\pi \ima\theta t)z\) for \(t\in \R\)
  and \(z\in\Torus\).  Composing \(\Corr(\Torus)\congto \Corr(\cm)\)
  with this equivalence, we get a functor \(\Corr(\Torus)\congto
  \Corr(\R\ltimes\Torus)\).  This takes a \(\Torus\)\nb-algebra,
  views it as a \(\cm\)\nb-algebra, and sends it to the crossed
  product \(A\rtimes\R\) viewed as an \(\R\ltimes\Torus\)-algebra
  using the dual \(\R\)\nb-action and the structure of
  \(\Cont(\Torus)\)-algebra given by the homomorphism
  \(\Cont(\Torus)\cong \Cst(\Z)\to\Cst(\R) \to\Mult(A\rtimes\R)\),
  which maps \(\Cont(\Torus)\) into the centre of~\(\Mult(A)\).

  Theorem~\ref{the:Abelian_cm_groupoid} also gives
  \(\Corr(\cm')\cong \Corr(\hat{\Torus}\ltimes\hat{\Z}) \cong
  \Corr(\Z\ltimes\Torus)\), where~\(\Z\) acts by rotation by
  multiples of~\(\theta\).  The quotient map \(\cm\to\cm'\) becomes
  the forgetful functor that restricts an \(\R\ltimes\Torus\)-action
  to a \(\Z\ltimes\Torus\)-action on \(\theta\Z\subseteq\R\).
\end{example}

\section{Factorisation of the crossed product functor}
\label{sec:factorise_crossed_product}

Now we put our results together to factorise the crossed product
functor \(\Corr(\cm)\to\Corr\) for a crossed module
\(\cm=(G,H,\tcm,\acm)\) of locally compact groups into ``elementary''
constructions.  First we give more details on the strict extensions
that decompose~\(\cm\) into simpler building blocks.

The image \(\tcm(H)\) is a normal subgroup in~\(G\) because
\(g\tcm(h)g^{-1}=\tcm_{\acm_g(h)}\).  Hence \(\cl{\tcm(H)}\) is a
closed normal subgroup in~\(G\) and
\[
\bar{\pi}_1(\cm) \defeq G/\cl{\tcm(H)}
\]
is a locally compact group.  The closed subgroup
\[
\pi_2(\cm) \defeq \ker \tcm \subseteq H
\]
is Abelian because \(hkh^{-1}=\acm_{\tcm(h)}(k)\) for all \(h,k\in
H\).

Since \(\pi_2(\cm)\) is an Abelian locally compact group, there is a
crossed module~\(\cm_1\) with \(H_1=\pi_2(\cm)\) and trivial~\(G_1\)
(and hence trivial \(\tcm_1\) and~\(\acm_1\)).  The
\(G\)\nb-action~\(\acm\) on~\(H\) leaves~\(\pi_2(\cm)\) invariant
and hence descends to an action~\(\acm_2\) of~\(G\) on \(H_2\defeq
H/\pi_2(\cm)\).  Of course, \(\tcm\) descends to a map
\(\tcm_2\colon H_2\to G_2=G\).  This defines a crossed module of
locally compact groups~\(\cm_2\).  The canonical maps
\(\cm_1\to\cm\to\cm_2\) are homomorphisms of crossed modules, and
they clearly form a strict extension of crossed modules, based on
the extensions of locally compact groups \(\pi_2(\cm)\into H\onto
H/\pi_2(\cm)\) and \(0\into G=G\).

There is a crossed module~\(\cm_3\) with \(G_3\defeq \cl{\tcm(H)}\),
\(H_3=H_2=H/\pi_2(\cm)\), and \(\tcm_3\colon H_3\to G_3\) and
\(\acm_3\colon G_3\to\Aut(H_3)\) induced by \(\tcm\) and~\(\acm\).
Let~\(\cm_4\) be the crossed module with \(G_4=\bar{\pi}_1(\cm)\)
and trivial \(H_4\), \(\tcm_4\) and~\(\acm_4\).  The obvious maps
give homomorphisms of crossed modules \(\cm_3\to \cm_2\to \cm_4\);
these form a strict extension of crossed modules because we have
extensions \(H_3=H_2\onto 0\) and \(G_3\into G_2\onto G_4\) of
locally compact groups.

The strict extensions above show that the crossed product functor
for \(\cm\)\nb-actions factorises into the three crossed
product functors with \(\cm_1\), \(\cm_3\) and~\(\cm_4\).  Now we
analyse actions and crossed products for \(\cm_1\), \(\cm_4\),
and~\(\cm_3\), respectively.

Since a crossed module~\(\cm_1\) of the form \((0,H_1,0,0)\) is
Abelian, Theorem~\ref{the:Abelian_cm_groupoid} shows that
\(\Corr(\cm_1)\) is equivalent to the \(2\)\nb-category of
\(\Cont_0(\widehat{H_1})\)-\(\Cst\)-algebras, such that the crossed
product by~\(\cm_1\) corresponds to the functor that maps a
\(\Cont_0(\widehat{H_1})\)-\(\Cst\)-algebra to its fibre at
\(1\in\widehat{H_1}\).  The case at hand is much easier than the
general case of Theorem~\ref{the:Abelian_cm_groupoid} because~\(G\) is
trivial.  We simply observe that a \(\cm_1\)\nb-action on~\(A\) is
exactly the same as a nondegenerate \Star{}homomorphism from
\(\Cont_0(\hat{H})\cong\Cst(H)\) to the central multiplier algebra
of~\(A\).

For crossed modules of the form \(\cm_4=(G_4,0,0,0)\), there is
nothing to analyse: actions of this crossed module are the same as
actions of the locally compact group~\(G_4\), and the crossed
product functor is also the same as for group actions.  We already
showed in~\cite{Buss-Meyer-Zhu:Higher_twisted} that group actions by
correspondences are equivalent to saturated Fell bundles.  The
crossed product is the cross-sectional \(\Cst\)\nb-algebra of a Fell
bundle.  By the Packer--Raeburn Stabilisation Trick (see also
\cite{Buss-Meyer-Zhu:Higher_twisted}*{Theorem 5.3}), we may
replace \(G_4\)\nb-actions by correspondences by ordinary continuous
group actions on a stabilisation.  This replaces the crossed product
functor for actions of~\(\cm_4\) by a classical crossed product
construction for actions of the locally compact group~\(G_4\).

Now we study crossed products by the thin crossed module
\[
\cm_3=(\cl{\tcm{H}},H/\ker\tcm,\tcm_3,\acm_3),
\]
using the results in Section~\ref{sec:simplification_thin} to
replace~\(\cm_3\) by an equivalent Abelian crossed
module~\(\cm_5\).  By Theorem~\ref{the:thin_Lie_equivalent_Abelian},
such an Abelian model for~\(\cm_3\) exists if both \(G\)
and~\(H\) are Lie groups.  More generally,
Lemma~\ref{lem:thin_cm_commutative_sufficient} gives an Abelian
model if \(H/\ker \tcm\) has a compactly generated subgroup~\(A\)
for which~\(\cl{\tcm(A)}\) is open in~\(\cl{\tcm(H)}\); in
particular, this happens if~\(H\) itself is compactly generated.
Theorem~\ref{the:thin_locally_compact_normalise} also gives a
necessary and sufficient condition for an Abelian model to exist;
but this criterion does not explain why this happens so often.

Assume that~\(\cm_3\) is equivalent to an Abelian crossed module
\(\cm_5=(G_5,H_5,\tcm_5,\acm_5)\); even better, we can achieve
that~\(G_5\) is compact Abelian, \(H_5\) is discrete, and~\(\acm_5\)
is trivial.  Since~\(\cm_3\) is thin, so is~\(\cm_5\), that is,
\(\tcm_5\) is an injective map with dense range.  By
Theorem~\ref{the:equivalence_for_actions}, the \(2\)\nb-categories
\(\Corr(\cm_3)\) and \(\Corr(\cm_5)\) of actions of \(\cm_3\)
and~\(\cm_5\) by correspondences are equivalent, in such a way that
the crossed product functors on both categories are identified.
Moreover, the proof shows immediately that the underlying
\(\Cst\)\nb-algebra is not changed: an action of~\(\cm_3\) becomes a
\(\cm_5\)\nb-action on the same \(\Cst\)\nb-algebra.  For crossed
modules of Lie groups, we have explained in
Section~\ref{sec:equivalence_crossed_modules} how to
construct~\(\cm_5\) explicitly out of~\(\cm_3\).

Theorem~\ref{the:Abelian_cm_groupoid} shows that \(\Corr(\cm_5)\) is
equivalent to the \(2\)\nb-category of actions of the groupoid
\(\widehat{G_5}\ltimes \widehat{H_5}\); this equivalence maps a
\(\cm_5\)\nb-action to the crossed product by~\(G_5\) equipped with
a canonical \(\Cont_0(\widehat{H_5})\)-\(\Cst\)-algebra structure
and the dual action of~\(\widehat{G_5}\); the crossed product
by~\(\cm_5\) corresponds to taking the fibre at~\(1\) for the
\(\Cont_0(\widehat{H_5})\)-\(\Cst\)-algebra structure.  Thus after
an equivalence
\[
\Corr(\cm_3)\simeq \Corr(\cm_5)
\simeq \Corr(\widehat{G_5}\ltimes \widehat{H_5})
\]
that on the underlying \(\Cst\)\nb-algebras takes a crossed product
with the Abelian compact group~\(G_5\), the crossed product
with~\(\cm_3\) becomes a fibre restriction functor.

The following theorem summarises our factorisation of the crossed
product:

\begin{theorem}
  \label{the:decompose_crossed}
  Let~\(\cm\) be a crossed module of Lie groups or, more generally, a
  crossed module of locally compact groups for which the associated
  thin crossed module \(H/\ker \tcm \to \cl{\tcm(H)}\) is equivalent
  to an Abelian crossed module.  There are a locally compact Abelian
  group~\(X\), compact Abelian groups \(Y\) and~\(K\), and a locally
  compact group~\(L\), such that for any action of~\(\cm\) by
  correspondences on a \(\Cst\)\nb-algebra~\(A\),
  \begin{enumerate}
  \item \(A\) carries a natural \(\Cont_0(X)\)-\(\Cst\)-algebra structure;
  \item the unit fibre~\(A_1\) of \(A\) for this natural
    \(\Cont_0(X)\)-\(\Cst\)-algebra structure carries a natural action
    of~\(K\) by correspondences;
  \item the crossed product \(A_2\defeq A_1\rtimes K\) carries a
    natural \(\Cont_0(Y)\)-\(\Cst\)-algebra structure;
  \item the unit fibre~\(A_3\) of~\(A_2\) for this natural
    \(\Cont_0(Y)\)-\(\Cst\)-algebra structure carries a natural action
    of~\(L\) by correspondences;
  \item the crossed product \(A_3\rtimes L\) is naturally
    isomorphic to~\(A\rtimes\cm\).
  \end{enumerate}
\end{theorem}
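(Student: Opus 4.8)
The plan is to assemble the results of the preceding sections along the two strict extensions $\cm_1\into\cm\onto\cm_2$ and $\cm_3\into\cm_2\onto\cm_4$ described above, analysing the building blocks $\cm_1$, $\cm_3$, $\cm_4$ in turn and reading off the four elementary operations. Recall that $\cm_1=(0,\ker\tcm,0,0)$, that $\cm_3=(\cl{\tcm(H)},H/\ker\tcm,\tcm_3,\acm_3)$ is thin, and that $\cm_4=(G/\cl{\tcm(H)},0,0,0)$ is an ordinary group. I set $X\defeq\widehat{\ker\tcm}$ and $L\defeq G/\cl{\tcm(H)}$. By hypothesis the thin crossed module~$\cm_3$ is equivalent to an Abelian crossed module; using Theorem~\ref{the:thin_locally_compact_normalise} (or Theorem~\ref{the:thin_Lie_equivalent_Abelian} in the Lie case) I may choose such a model $\cm_5=(G_5,H_5,\tcm_5,0)$ with $G_5$ compact Abelian and $H_5$ discrete, and then put $K\defeq G_5$ and $Y\defeq\widehat{H_5}$. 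Thus $X$ is locally compact Abelian, $K$ and~$Y$ are compact Abelian, and $L$ is locally compact.

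First I treat~$\cm_1$. Restricting the given $\cm$-action to~$\cm_1$ turns~$A$ into a $\cm_1$-algebra; since its group is trivial, this is exactly a nondegenerate \Star{}homomorphism $\Cont_0(X)\cong\Cst(\ker\tcm)\to Z\Mult(A)$, that is, a $\Cont_0(X)$-$\Cst$-algebra structure on~$A$, which is item~(1). By Proposition~\ref{pro:crossed_acm_trivial}, the crossed product $A_1\defeq A\rtimes\cm_1$ is the fibre of~$A$ at $1\in X$. Applying Theorem~\ref{the:partial_crossed} to $\cm_1\into\cm\onto\cm_2$ equips~$A_1$ with a $\cm_2$-action satisfying $A_1\rtimes\cm_2\cong A\rtimes\cm$; applying it again to $\cm_3\into\cm_2\onto\cm_4$ equips $A_1\rtimes\cm_3$ with a $\cm_4$-action satisfying $(A_1\rtimes\cm_3)\rtimes\cm_4\cong A_1\rtimes\cm_2\cong A\rtimes\cm$.

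Next I treat~$\cm_3$ by duality. Restricting the $\cm_2$-action on~$A_1$ to~$\cm_3$ and transporting it along the equivalence $\cm_3\simeq\cm_5$ of Theorem~\ref{the:equivalence_for_actions}, which changes neither the underlying $\Cst$-algebra nor the crossed product, turns~$A_1$ into a $\cm_5$-algebra. Forgetting the $H_5$-part leaves an action of $K=G_5$ by correspondences, which is item~(2). By the explicit form of the duality equivalence $\Corr(\cm_5)\simeq\Corr(\widehat{G_5}\ltimes\widehat{H_5})$ in Theorem~\ref{the:Abelian_cm_groupoid}, the crossed product $A_2\defeq A_1\rtimes K$ acquires a dual $\widehat{G_5}$-action together with the canonical $\Cont_0(Y)$-$\Cst$-algebra structure coming from~\eqref{eq:hatH_structure}, which is item~(3); and the crossed product $A_1\rtimes\cm_5$ is the fibre~$A_3$ of~$A_2$ at $1\in Y$, again by Proposition~\ref{pro:crossed_acm_trivial}. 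Since the equivalence $\cm_3\simeq\cm_5$ preserves crossed products, $A_3\cong A_1\rtimes\cm_3$.

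Finally, the $\cm_4$-action produced by the second application of Theorem~\ref{the:partial_crossed} lives precisely on $A_1\rtimes\cm_3\cong A_3$, and because $\cm_4=(L,0,0,0)$ is an ordinary group this is exactly an action of~$L$ by correspondences, completing item~(4); its crossed product is $A_3\rtimes L\cong(A_1\rtimes\cm_3)\rtimes\cm_4\cong A\rtimes\cm$, which is item~(5). Each construction entering the decomposition---the induced actions of Theorem~\ref{the:partial_crossed}, the transport along the crossed-module equivalence, and the duality functor of Theorem~\ref{the:Abelian_cm_groupoid}---is natural, so the composite is natural as required. The step that needs the most care is the junction between the two halves: one must check that the $L$-action read off after identifying $A_1\rtimes\cm_3$ with the fibre~$A_3$ through the chain $\Corr(\cm_3)\simeq\Corr(\cm_5)\simeq\Corr(\widehat{G_5}\ltimes\widehat{H_5})$ coincides with the $\cm_4$-action furnished by Theorem~\ref{the:partial_crossed}. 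This is immediate once one notes that these equivalences affect only the $\cm_3$-part of the structure and leave the complementary $\cm_4$-action untouched.
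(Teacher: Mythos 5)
Your proposal is correct and follows essentially the same route as the paper: Theorem~\ref{the:decompose_crossed} is proved there by exactly this assembly of the two strict extensions \(\cm_1\into\cm\onto\cm_2\) and \(\cm_3\into\cm_2\onto\cm_4\) via Theorem~\ref{the:partial_crossed}, the replacement of the thin~\(\cm_3\) by an Abelian \(\cm_5\) with compact~\(G_5\) and discrete~\(H_5\) (Theorems \ref{the:thin_Lie_equivalent_Abelian}/\ref{the:thin_locally_compact_normalise} and~\ref{the:equivalence_for_actions}), and the duality of Theorem~\ref{the:Abelian_cm_groupoid} together with Proposition~\ref{pro:crossed_acm_trivial} to identify the two crossed products by Abelian crossed modules as fibre restrictions at \(1\in\widehat{H_1}\) and \(1\in\widehat{H_5}\). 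Your choices \(X=\widehat{\ker\tcm}\), \(K=G_5\), \(Y=\widehat{H_5}\), \(L=G/\cl{\tcm(H)}\) and the naturality argument, including transporting the \(\cm_4\)\nb-action along the natural isomorphism \(A_3\cong A_1\rtimes\cm_3\), match the paper's reasoning.
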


\subsection{Computing K-theory of crossed module crossed products}
\label{sec:K-theory}

In the localisation formulation of~\cite{Meyer-Nest:BC}, the
Baum--Connes assembly map for a locally compact group~\(G\) compares
the \(\K\)\nb-theory of the reduced crossed product with a more
topological invariant that uses only crossed products for restrictions
of the action to compact subgroups of~\(G\).  Its assertion is
therefore trivial if~\(G\) is itself compact.  Crossed products for
compact groups are an ``elementary'' operation for \(\K\)\nb-theory
purposes in the sense that there is no better way to compute the
\(\K\)\nb-theory than the direct one.  Crossed products for
non-compact groups are not ``elementary'' in this sense because the
Baum--Connes conjecture (if true) allows us to reduce the
\(\K\)\nb-theory computation to \(\K\)\nb-theory computations for
compact subgroups and some algebraic topology to assemble the results
of these computations.

Taking the fibre in a \(\Cont_0(X)\)-\(\Cst\)\nb-algebra seems to be
an operation that is also ``elementary'' in the above sense.  At
least, we know of no better way to compute the \(\K\)\nb-theory of a
fibre than the direct one.  Notice that
\(\Cont_0(X)\)-\(\Cst\)\nb-algebras need not be locally trivial.

In the notation of Theorem~\ref{the:decompose_crossed}, the functors
\(A\mapsto A_1\mapsto A_2\mapsto A_3\) are therefore ``elementary''
for \(\K\)\nb-theory purposes.  The remaining fourth step \(A_3\mapsto
A_3\rtimes L\) is the (full) crossed product by the locally compact
group \(L=G/\cl{\tcm(H)}\).  Many results are available about the
\(\K\)\nb-theory of such crossed products.

Hence our decomposition of crossed module crossed products also
gives us a useful recipe for computing their \(\K\)\nb-theory.  This
recipe is, however, quite different from the localisation approach
for groups in~\cite{Meyer-Nest:BC}.

\begin{bibdiv}
  \begin{biblist}
\bib{Buss-Meyer-Zhu:Non-Hausdorff_symmetries}{article}{
  author={Buss, Alcides},
  author={Meyer, Ralf},
  author={Zhu, {Ch}enchang},
  title={Non-Hausdorff symmetries of \(\textup C^*\)\nobreakdash -algebras},
  journal={Math. Ann.},
  issn={0025-5831},
  volume={352},
  number={1},
  pages={73--97},
  date={2012},
  review={\MRref {2885576}{}},
  doi={10.1007/s00208-010-0630-3},
}

\bib{Buss-Meyer-Zhu:Higher_twisted}{article}{
  author={Buss, Alcides},
  author={Meyer, Ralf},
  author={Zhu, {Ch}enchang},
  title={A higher category approach to twisted actions on \(\textup C^*\)\nobreakdash -algebras},
  journal={Proc. Edinb. Math. Soc. (2)},
  date={2013},
  volume={56},
  number={2},
  pages={387--426},
  issn={0013-0915},
  doi={10.1017/S0013091512000259},
  review={\MRref {3056650}{}},
}

\bib{Doran-Fell:Representations_2}{book}{
  author={Doran, Robert S.},
  author={Fell, James M. G.},
  title={Representations of $^*$\nobreakdash -algebras, locally compact groups, and Banach $^*$\nobreakdash -algebraic bundles. Vol. 2},
  series={Pure and Applied Mathematics},
  volume={126},
  publisher={Academic Press Inc.},
  place={Boston, MA},
  date={1988},
  pages={i--viii and 747--1486},
  isbn={0-12-252722-4},
  review={\MRref {936629}{90c:46002}},
}

\bib{Echterhoff:prime_ideal_space}{article}{
  author={Echterhoff, Siegfried},
  title={The primitive ideal space of twisted covariant systems with continuously varying stabilizers},
  journal={Math. Ann.},
  volume={292},
  date={1992},
  number={1},
  pages={59--84},
  issn={0025-5831},
  doi={10.1007/BF01444609},
  review={\MRref {1141785}{93a:46128}},
}

\bib{Echterhoff-Kaliszewski-Quigg-Raeburn:Categorical}{article}{
  author={Echterhoff, Siegfried},
  author={Kaliszewski, Steven P.},
  author={Quigg, John},
  author={Raeburn, Iain},
  title={A categorical approach to imprimitivity theorems for $C^*$\nobreakdash -dynamical systems},
  journal={Mem. Amer. Math. Soc.},
  volume={180},
  date={2006},
  number={850},
  pages={viii+169},
  issn={0065-9266},
  review={\MRref {2203930}{2007m:46107}},
  doi={10.1090/memo/0850},
}

\bib{Echterhoff-Quigg:InducedCoactions}{article}{
  author={Echterhoff, Siegfried},
  author={Quigg, John},
  title={Induced coactions of discrete groups on $C^*$\nobreakdash -algebras},
  journal={Canad. J. Math.},
  volume={51},
  date={1999},
  number={4},
  pages={745--770},
  issn={0008-414X},
  review={\MRref {1701340}{2000k:46094}},
  doi={10.4153/CJM-1999-032-1},
}

\bib{Green:Local_twisted}{article}{
  author={Green, Philip},
  title={The local structure of twisted covariance algebras},
  journal={Acta Math.},
  volume={140},
  date={1978},
  number={3-4},
  pages={191--250},
  issn={0001-5962},
  review={\MRref {0493349}{58\,\#12376}},
  doi={10.1007/BF02392308},
}

\bib{Hilsum-Skandalis:Morphismes}{article}{
  author={Hilsum, Michel},
  author={Skandalis, Georges},
  title={Morphismes \(K\)\nobreakdash -orient\'es d'espaces de feuilles et fonctorialit\'e en th\'eorie de Kasparov \textup (d'apr\`es une conjecture d'A. Connes\textup )},
  journal={Ann. Sci. \'Ecole Norm. Sup. (4)},
  volume={20},
  date={1987},
  number={3},
  pages={325--390},
  issn={0012-9593},
  review={\MRref {925720}{90a:58169}},
  eprint={http://www.numdam.org/item?id=ASENS_1987_4_20_3_325_0},
}

\bib{Leinster:Basic_Bicategories}{article}{
  author={Leinster, Tom},
  title={Basic Bicategories},
  date={1998},
  status={eprint},
  note={\arxiv {math/9810017}},
}

\bib{MacLane-Whitehead:3-type}{article}{
  author={MacLane, Saunders},
  author={Whitehead, John Henry Constantine},
  title={On the $3$\nobreakdash -type of a complex},
  journal={Proc. Nat. Acad. Sci. U. S. A.},
  volume={36},
  date={1950},
  pages={41--48},
  review={\MRref {0033519}{11,450h}},
  eprint={http://www.pnas.org/content/36/1/41.full.pdf+html},
}

\bib{Meyer-Nest:BC}{article}{
  author={Meyer, Ralf},
  author={Nest, Ryszard},
  title={The Baum--Connes conjecture via localisation of categories},
  journal={Topology},
  volume={45},
  date={2006},
  number={2},
  pages={209--259},
  issn={0040-9383},
  review={\MRref {2193334}{2006k:19013}},
  doi={10.1016/j.top.2005.07.001},
}

\bib{Muhly:BundlesGroupoids}{article}{
  author={Muhly, Paul S.},
  title={Bundles over groupoids},
  conference={ title={Groupoids in analysis, geometry, and physics}, address={Boulder, CO}, date={1999}, },
  book={ series={Contemp. Math.}, volume={282}, publisher={Amer. Math. Soc.}, place={Providence, RI}, },
  date={2001},
  pages={67--82},
  review={\MRref {1855243}{2003a:46085}},
  doi={10.1090/conm/282/04679},
}

\bib{Muhly-Renault-Williams:Equivalence}{article}{
  author={Muhly, Paul S.},
  author={Renault, Jean N.},
  author={Williams, Dana P.},
  title={Equivalence and isomorphism for groupoid \(C^*\)\nobreakdash -algebras},
  journal={J. Operator Theory},
  volume={17},
  date={1987},
  number={1},
  pages={3--22},
  issn={0379-4024},
  review={\MRref {873460}{88h:46123}},
  eprint={http://www.theta.ro/jot/archive/1987-017-001/1987-017-001-001.pdf},
}

\bib{Muhly-Williams:Equivalence.FellBundles}{article}{
  author={Muhly, Paul S.},
  author={Williams, Dana P.},
  title={Equivalence and disintegration theorems for Fell bundles and their \(C^*\)\nobreakdash -algebras},
  journal={Dissertationes Math. (Rozprawy Mat.)},
  volume={456},
  date={2008},
  pages={1--57},
  issn={0012-3862},
  review={\MRref {2446021}{2010b:46146}},
  doi={10.4064/dm456-0-1},
}

\bib{Nilsen:Bundles}{article}{
  author={Nilsen, May},
  title={\(C^*\)\nobreakdash -bundles and \(C_0(X)\)-algebras},
  journal={Indiana Univ. Math. J.},
  volume={45},
  date={1996},
  number={2},
  pages={463--477},
  issn={0022-2518},
  review={\MRref {1414338}{98e:46075}},
  doi={10.1512/iumj.1996.45.1086},
}

\bib{Noohi:two-groupoids}{article}{
  author={Noohi, Behrang},
  title={Notes on 2\nobreakdash -groupoids, 2\nobreakdash -groups and crossed modules},
  journal={Homology, Homotopy Appl.},
  volume={9},
  date={2007},
  number={1},
  pages={75--106},
  issn={1532-0073},
  review={\MRref {2280287}{2007m:18006}},
  eprint={http://projecteuclid.org/euclid.hha/1175791088},
}

\bib{Yamagami:IdealStructure}{article}{
  author={Yamagami, Shigeru},
  title={On primitive ideal spaces of $C^*$\nobreakdash -algebras over certain locally compact groupoids},
  booktitle={Mappings of operator algebras (Philadelphia, PA, 1988)},
  series={Progr. Math.},
  volume={84},
  pages={199--204},
  publisher={Birkh\"auser Boston},
  place={Boston, MA},
  date={1990},
  review={\MRref {1103378}{92j:46110}},
}
  \end{biblist}
\end{bibdiv}
\end{document}